\documentclass[11pt]{article}
\usepackage[margin=4.46cm]{geometry}
\usepackage{graphicx}
\usepackage{epsfig}
\usepackage{amsmath}
\usepackage{amssymb}
\usepackage{caption}
\usepackage{amsthm}
\usepackage{enumitem}
\usepackage[most]{tcolorbox}

\usepackage{tikz}
\numberwithin{equation}{section}
\newcommand{\commentout}[1]{}

\def \Rset {{\mathbb R}}

\def \Zset {{\mathbb Z}}

\def \Nset {{\mathbb N}}

\newcommand{\nit}{\noindent}

\newcommand{\be}{\begin{equation}}
\newcommand{\ee}{\end{equation}}
\newcommand{\ba}{\begin{eqnarray}}
\newcommand{\ea}{\end{eqnarray}}
\newcommand{\bi}{\begin{itemize}}
\newcommand{\ei}{\end{itemize}}
\newcommand{\br}{\begin{eqnarray}}
\newcommand{\er}{\end{eqnarray}}

\newtheorem{example}{Example}[section]
\newtheorem{theo}{Theorem}[section]
\newtheorem{defin}{Definition}[section]

\newtheorem{lem}{Lemma}[section]
\newtheorem{cor}{Corollary}[section]
\newtheorem{rmk}{Remark}[section]

\begin{document}
\title{{\Large\bf{Geometry of the Minimal Level Set of the Effective Hamiltonian in Two Dimensions}}}
\author{Yifeng Yu\thanks{Department of Mathematics, University of California, Irvine. Email: \texttt{yifengy@uci.edu}. This work was partly supported by NSF grant DMS-2000191.}}
\date{}
\maketitle

\begin{abstract}
In this paper, we characterize the geometric structure of the boundary of the minimal level set $F_0$ of the effective Hamiltonian $\overline{H}$ associated with the mechanical Hamiltonian
\[
H(p,x)=\frac12|p|^2+V(x)
\]
in dimension $n=2$, where $V$ on $\mathbb{T}^2=\mathbb{R}^2/\mathbb{Z}^2$ has a unique maximum and Hessian at this maximizer has two distinct negative eigenvalues.

For $n=2$, the geometry of the level sets of $\overline{H}$ strictly above the minimum has been largely understood since the 1990s, mainly through the equivalent formulation in terms of stable norms; we fill the remaining gap at the minimal level by providing an explicit, verifiable characterization of $\partial F_0$.

In particular, we show that $p \in \partial F_0$ does not lie on any flat edge if and only if $\partial F_0$ is differentiable at $p$ and its outer normal direction is irrational, except possibly at one exceptional pair of points $\pm p_0$. Consequently,  flat edges are dense along $\partial F_0$. We also construct an example demonstrating that this exceptional pair can occur, showing the result is sharp. 

\end{abstract}

\textbf{Keywords:}  Hamilton--Jacobi equations, homogenization, effective Hamiltonian, minimal level set, flat edges, Aubry--Mather theory, weak KAM theory.

\section{Introduction}

Assume that $H=H(p,x)\in C(\Rset^n\times \Rset^n)$ is $\Zset^n$-periodic in $x$ and uniformly coercive in $p$, i.e
$$
\lim_{|p|\to +\infty}\min_{x\in \Rset^n}H(p,x)=+\infty.
$$
For each $\epsilon>0$, let $u^{\epsilon}\in C(\mathbb{R}^n\times [0,\infty))$ be the viscosity solution to the following Hamilton-Jacobi equation
\be\label{HJ-ep}
\begin{cases}
u_{t}^{\epsilon}+ H\left(Du^{\epsilon}, {x\over \epsilon}\right)=0 \quad &\text{in $\mathbb{R}^n\times (0,\infty)$},\\
u^{\epsilon}(x,0)=g(x) \quad &\text{on $\mathbb{R}^n$}.
\end{cases}
\ee
It was proved by Lions, Papanicolaou and Varadhan \cite{LPV} that $u^{\epsilon}$, as $\epsilon\to 0$, converges locally uniformly to $u$, the solution of the effective equation,
\be\label{HJ-lim}
\begin{cases}
u_t+{\overline H}(Du)=0 \quad &\text{in $\mathbb{R}^n\times (0,\infty)$},\\
u(x,0)=g(x) \quad &\text{on $\mathbb{R}^n$}. 
\end{cases}
\ee
Here $\overline H:\Rset^n\to \Rset$ is called ``effective Hamiltonian" in \cite{LPV}  that is determined by the following cell problem: for any $p\in \Rset^n$, there exists a unique number $\overline H$ such that the equation
\be\label{cell}
H(p+Dv,x)=\overline H(p)
\ee
has a $\Zset^n$-periodic viscosity solution. If \(H\) is convex in the \(p\)-variable, then \(\overline H\) admits the inf--max representation
\[
\overline H(p)
= \inf_{\phi \in C^{1}(\mathbb{T}^{n})}
  \max_{x \in \mathbb{R}^{n}}
  H\bigl(p + D\phi(x), x\bigr).
\]
See, for example, \cite{E2008} for a proof.
 
A natural yet largely open problem is to understand the properties of \( \overline{H} \), which was first explicitly proposed in \cite{E2008}. Similar questions have been studied for decades in the context of stable norms in periodic settings 
(see, for instance, \cite{AB, B2,  Burago1994, BIK}), as well as in the study of limit shapes in first-passage percolation 
in stochastic settings (see the survey \cite{ADH} and the references therein). We would like to mention that the effective Hamiltonian is called the ``$\alpha$ function" in dynamical system literature and the action variable $p$ is often denoted as $c$ (cohomology class): $\alpha(c)$.

For simplicity, throughout this paper we consider the mechanical Hamiltonian
\[
H(p, x) = \frac{1}{2}|p|^2 + V(x),
\]
which serves as one of the most important examples. The potential function \( V \) is assumed to be smooth and \( \mathbb{Z}^n \)-periodic. The following basic global properties were established in \cite{LPV}:
\[
\text{\( \overline{H} \) is convex}, \quad \overline{H}(p) = \overline{H}(-p), \quad \min_{\mathbb{R}^n} \overline{H} = \max_{\mathbb{R}^n} V,
\]
and
\[
\frac{1}{2}|p|^2 + \min_{\mathbb{R}^n} V \;\leq\; \overline{H}(p) \;\leq\; \frac{1}{2}|p|^2 + \max_{\mathbb{R}^n} V.
\]

\medskip

Nevertheless, the macroscopic properties described above are too coarse to capture the delicate local structure of the effective Hamiltonian, which remains far from obvious.

Hereafter, $\Bbb T^n=\mathbb{R}^\backslash\mathbb{Z}^n$ represents the $n$-dimensional flat torus.  Without loss of generality, we assume throughout this paper that
\[
\max_{\mathbb{R}^2} V = 0.
\]
Then $\min_{p\in \Rset^n}\overline H(p)=0$.

\medskip

\noindent\textbf{$\bullet$ Known local finer properties of \( \overline{H} \).}  
Below we summarize the known results concerning the local structure of \( \overline{H} \).  These results are often derived using Aubry–Mather theory. See \cite{TY} for connections between stable norms and effective Hamiltonian. For $c\geq 0$, define
\[
F_c = \left\{ p \in \mathbb{R}^n \;\middle|\; \overline{H}(p) = c \right\}
\]
and
$$
\Rset\Zset^n=\{\lambda v|\ \lambda\in \Rset, \ v\in \Zset^n\}. 
$$
Assume $c>0.$
\medskip

{\bf $\star$  \underline{Two dimension ($n=2$) without genericity assumptions}.} 

\begin{itemize}
    \item The level curve \( F_c \) is of class \( C^1 \) (see \cite{C}) although the function $\overline H$ itself might not be $C^1$;
    \item For any \( p \in F_c \), if the associated normal vector \( q_p \in \mathbb{R}^2 \setminus \mathbb{R} \mathbb{Z}^2 \), then \( p \) does not lie on any line segment of \( F_c \) (see \cite{B2}). Note that the results in \cite{B2} were stated in an equivalent but more general formulation using stable norms associated with the Riemannian metric $g(x)=2(c-V(x))((dx_1)^2+(dx_2)^2)$ on $\Rset^2$. See Section \ref{section:sketch-of-proof} for more details;
    \item  The level curve \( F_c \) contains at least two line segments (symmetric about the origin) unless \( V \) is constant (\cite{B2}).  This result is optimal. See Example \ref{example:separable}. However,  no information is currently available regarding the possible locations of these line segments for a given $V$. 

\end{itemize}

{\bf $\star$  \underline{Two dimension ($n=2$) with genericity assumptions}.} 

\begin{itemize}

\item For generic $V$,  line segments will appear at every point on $\partial F_c$ whose the normal vector belongs to $\Rset\Zset^2$.

\item It was proved in \cite{Yu} that, when $n=2$, for generic $V$, $\overline H$ is highly degenerate, i.e.,  piecewise one dimensional on a dense open set on $\Rset^2$. Hence the local behavior of $\overline H$ could be changed dramatically by the homogenization process.

\end{itemize}

Generic assumptions are often employed in dynamical systems (and more broadly in analysis, PDE, etc) to rule out pathological cases via small perturbations. However, it is in general impossible to tell whether a given $V$ is ``generic". 
\medskip

{\bf $\star$ \underline{Higher dimensions $n\geq 3$}}

\begin{itemize}

\item The \textbf{Aubry--Mather theory} provides a powerful framework for describing action-minimizing curves. 
In the two-dimensional case ($n=2$), the presence of a topological obstruction enhances the control that one has over such curves, 
and this plays a crucial role in establishing the earlier result. 
In contrast, for $n \geq 3$, our understanding is far more limited: the absence of comparably effective tools makes the analysis 
of the effective Hamiltonian $\overline{H}$ much more difficult, and properties that hold in two dimensions---such as 
differentiability and strict convexity---may no longer be valid (see \cite{BIK, JTY}). 

\item It was proved in \cite{EG} that $\overline{H}$ is strictly convex along any direction not tangent to the level set $F_c$ . 
As a consequence, line segments can only occur within level sets, regardless of the dimension.  

\end{itemize}

\medskip

 To complete the picture in two dimension, we now turn to the minimal level set---an interesting feature of $\overline{H}$ not covered in the study of stable norms since the associated metric $g(x)=2(\max_{\Rset^n}V-V(x))((dx_1)^2+(dx_2)^2)$ on $\Rset^2$ is degenerate at the maximizer(s) of $V$.

 The minimal level $c=0$ is qualitatively different from the supercritical regime $c>0$. When $c>0$, all action-minimizing curves (equivalently, minimizing geodesics) are unbounded in both time directions and cannot intersect twice. When $c=0$, however, the presence of stagnation points (the maximizer(s) of $V$) changes the picture: action-minimizing curves may be finite (e.g., homoclinic), or they may be bounded in one time direction while tending to a stagnation point. For our purposes, we need to glue together such pieces, which leads to the failure of the ``no-two-intersections'' property and other intersection restrictions available in the $c>0$ case. See Section~\ref{section:sketch-of-proof} for more details.

  Our goal of this paper is to characterize the geometric properties of  the minimum level
$$
F_0=\{p\in \Rset^n|\ \overline H(p)=\max_{\Rset^n}V\}.
$$
\begin{itemize}
  \item \textbf{Existence of a flat part and explicit examples.}
  It was first shown in \cite{LPV}, via a very simple argument, that $F_0$ is an $n$-dimensional convex set (a flat part) for quite general $V$.
  Subsequently, \cite{Con} gives a detailed analysis of the assumptions on $V$ that ensure the existence or non-existence of a flat part, and it examines several explicit and illuminating examples of $\overline{H}$ in the case $V=\infty$ on a convex subset (the “obstacle”) of various shapes, such as a disk or a square. In addition, \cite{ZZ2012} provides an example showing that $\overline{H}$ is not differentiable at some points of $\partial F_0$. 

  \item \textbf{Dynamical properties along $\partial F_0$.}
 In \cite{Cheng}, a detailed description of Aubry--Mather sets on $\partial F_0$ is given in generic situations; these sets play a crucial role in the construction of Arnold diffusion. Our work is strongly inspired by \cite{Cheng}, and we will explain the precise connections in more detail later in the paper.
\end{itemize}

\begin{tcolorbox}[
  enhanced,
  breakable,
  colback=white,
  colframe=black,
  boxrule=0.6pt,
  arc=2mm,
  left=1.6mm,right=1.6mm,top=1.2mm,bottom=1.2mm
]
\textbf{Assumption (M).}
We assume that the potential $V$ is smooth and $\Zset^2$-periodic, and that it attains its maximum uniquely on $\mathbb{T}^2$. More precisely,
\[
\{x\in\Rset^2:\ V(x)=0\}=\Zset^2.
\]
Moreover, the Hessian of $V$ at the origin $O=(0,0)$ has two distinct eigenvalues $-a^2$ and $-b^2$ with $0<a<b$, i.e.,
\[
\text{$D^2V(O)$ has eigenvalues $-a^2$ and $-b^2$, with $0<a<b$.}
\]
\end{tcolorbox}

Let \( \{v_a, -v_a\} \) and \( \{v_b, -v_b\} \) denote the pairs of unit eigenvectors associated with the eigenvalues \( -a^2 \) and \( -b^2 \), respectively.

Our results can be extended appropriately to the case where \( V \) has multiple non-degenerate maximum points or when $|p|^2$ is replaced by more general quadratic term $p\cdot A(x)\cdot p$.  For clarity of presentation, we adopt the above assumptions throughout this paper.

Let $S^1\subset \Rset^2$ be the set of unit vectors. For $p\in \partial F_0$, write 
$$
n_p=\{q\in S^1|\ \text{$q\cdot (p'-p)\leq 0$ for all $p'\in F_0$}\}.
$$
as the unit outer normal cone at $p$. We say that $\partial F_0$ is differentiable at $p$ if $n_p$ contains only one element $n_p=\{q_p\}$, equivalently, there is a unique supporting line (tangent line) at $p$. The vector $q_p$ is then the outer unit normal vector at $p$.

\begin{defin}
$p\in \partial F_0$ is called a `` linear point" if there exists $p'\in \partial F_0$ such that $p\not=p'$ and the line segment (flat edge)
$$
\{tp+(1-t)p'|\ t\in [0,1]\}\subset \partial F_0.
$$
Otherwise, $p$ is called a  ``nonlinear point".  $p$ is called a rational nonlinear point if $p$ is a nonlinear point and $n_p=\{q_p\}$ for some $q_p\in \Rset\Zset^2$.
\end{defin}
Since  $\overline H$ is even in $p$, a point $p$ is nonlinear if and only if $-p$ is nonlinear.

\medskip

{\bf Main Contribution:} We show that $p \in \partial F_0$ is a nonlinear point if and only if $\partial F_0$ is differentiable at $p$ and its outer normal direction is irrational, except possibly at one exceptional pair of points $\pm p_0$. Example \ref{example:removable} demonstrates that such a pair can indeed occur. Consequently, our results are sharp.

\medskip

The following two theorems give detailed and precise description of the above statement. 
\begin{theo}\label{main} Given $n=2$ and {\bf Assumption (M)},  then one of the following holds:

\medskip

(1) For any $p\in \partial F_0$, $p$ is a nonlinear point {\bf if and only if} $\partial F_0$ is differentiable at $p$ and the associated outer unit normal vector $q_p\in S^1\backslash \Rset\Zset^2$; or

(2) There exists  a rational nonlinear point $p_0\in \partial F_0$ such that for any $p\in \partial F_0\backslash \{p_0,-p_0\}$, $p$ is a nonlinear point {\bf if and only if} $\partial F_0$ is differentiable at $p$ and the associated outer unit normal vector $q_p\in S^1\backslash \Rset\Zset^2$.

\end{theo}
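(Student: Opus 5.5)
Two facts about $\partial F_0$ do most of the work: a weak KAM / Aubry--Mather description of $n_p$, and a classification of the minimizing objects at level $0$ by their rotation direction.

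\textbf{Two standard facts.} For $p\in\partial F_0$, the outer normal cone $n_p$ is the set of normalized rotation vectors of minimizing (Mather) measures and of $c$-semi-static curves for the Lagrangian $L=\frac12|\dot x|^2-V(x)+p\cdot\dot x$ ($p$ playing the role of $c$). A flat edge of $\partial F_0$ with normal $q$ is the exposed face $\{p\in F_0:\ q\cdot p=\max_{F_0}q\cdot\,\cdot\}$. Its length is governed by the gap between two barrier/action quantities: the minimal action of closed loops in homology class $k$ (when $q\parallel k\in\Zset^2$), versus heteroclinic/homoclinic chains through the stagnation point $O$.

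\textbf{Direction ``$\Leftarrow$''.} Take an irrational normal $q_p$ at a differentiable point. I would show that every semi-static curve with rotation direction $q_p$ is an unbounded curve which does not self-intersect on $\mathbb{T}^2$ after projection, and does not pass through $O$. The curves at level $0$ with asymptotic direction $q_p$ then form a lamination. The argument of \cite{B2} (the Aubry graph is ordered, so the face is a single point) carries over once we know two things. First, gluing pieces through $O$ cannot produce an irrational direction, because $O$ is a single point per fundamental domain. Second, the set of $p'$ sharing this normal is determined by the sub/supersolution ordering.

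\textbf{Direction ``$\Rightarrow$''.} For a nonlinear point $p$ I must show two things, except at $\pm p_0$.
\begin{enumerate}
\item[(a)] $\partial F_0$ is differentiable at $p$. If $n_p$ contained an arc, there would be minimizers with two different directions. In dimension two, crossing arguments force these to be a rational direction and $O$-heteroclinic connections. The exposed faces for normals slightly tilted would then be flat, contradicting nonlinearity. This is done via the support function $\sigma(q)=\max_{F_0}q\cdot p'$, whose non-differentiability in $q$ corresponds to faces.
\item[(b)] The normal is irrational. Suppose $q_p\parallel k\in\Zset^2$ primitive. Compare two quantities: the minimal $L_0$-action $A(k)$ over closed loops in class $k$ avoiding $O$, and the action $B(k)$ of concatenations of homoclinic orbits to $O$ with total class $k$. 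If there is a periodic minimizing orbit not through $O$, the standard deformation yields a segment of $\partial F_0$ of positive length with normal $k$. If all minimizers go through $O$, there are two subcases. If two distinct homoclinics are available in directions spanning $k$, the face is again nondegenerate. Otherwise the minimizer is a single homoclinic loop to $O$ in class $k$ (or passing through $O$ in a particular way).
\end{enumerate}

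\textbf{Where the exception comes from.} The hyperbolic linearization at $O$, with eigenvalues $a<b$, matters here. Generic orbits enter and leave $O$ tangent to $\pm v_a$, and only exceptional ones do so along $\pm v_b$. A homoclinic arriving along $v_a$ and leaving along $-v_a$ can be smoothed or corner-cut to lower the action for nearby $p$, producing flatness. The degenerate configuration, in which entry and exit are tangent to $v_b$ or are not corner-cuttable, is the only one where the face collapses to a point. I would show this can happen for at most one class $\pm k$. The reason is that such homoclinics in two distinct classes would intersect, and cut-and-paste would yield a strictly shorter competitor, again by the strict inequality $a<b$. That single class gives $\pm p_0$.

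\textbf{Main obstacle.} I expect step (b) to be the hardest: the local analysis at $O$ showing that every rational normal yields a flat edge except in the one degenerate homoclinic configuration, and that this configuration occurs for at most one pair. Intersection control fails at level $0$, so I would first try to quantify the action gain of corner-cutting near $O$ using the Hartman--Grobman/Sternberg linearization with eigenvalues $a,b$.
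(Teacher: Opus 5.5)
You correctly guess that the exceptional pair comes from homoclinics tangent to $\pm v_b$, but step (b) rests on a false claim. A periodic minimizer of class $k$ that avoids $O$ does \emph{not} produce a flat edge with normal $k$. Example~\ref{example:nonremovable} is a counterexample: there $p_0=(0,1)$ is a nonlinear point with $n_{p_0}=\{(0,1)\}$, yet $\mathcal{M}_{p_0}=\Rset^2$ is filled by periodic orbits of class $(0,1)$, which away from the lines $\{x_1\in\Zset\}$ never meet $\Zset^2$. Already for $c>0$, Step~I of Section~\ref{section:sketch-of-proof} says nonlinear rational points have the plane filled by periodic minimizers.

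The relevant dichotomy is \emph{gap versus no gap} (Definition~\ref{def:gap}), not ``a periodic orbit avoiding $O$ versus all minimizers through $O$''.
\begin{itemize}
\item If there is a gap and $p_0$ is a differentiable nonlinear point, approximate $p_0$ along $\partial F_0$ from both sides and use upper semicontinuity of the Aubry set. This gives two curves $\gamma_\pm\subset\mathcal{A}_{p_0}$ that cross the gap in opposite patterns and are unbounded inside it (Corollary~\ref{cor:twodirectionapproximation}). They must meet, Aubry curves can only meet at integer points, and comparing $\gamma_-$ with $\gamma_+$ and $\gamma_++k$ gives a contradiction (Lemma~\ref{lem:onlyifpart}).
\item If there is no gap, periodic minimizers accumulate at $O$, and the needed input is the blow-up at $O$. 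Lemma~\ref{near-origin} shows that, for the quadratic model, passing near $O$ across the $v_b$-axis costs strictly \emph{more} than going into $O$ and out again. So your corner-cutting heuristic points the wrong way. Via Theorem~\ref{theo:no-flat-fill} and Lemma~\ref{lem:nodirectionoverlap}, this forces $\gamma_{b0}(\pm t),\gamma_{b1}(\pm t)$ to be homoclinics in $\mathcal{A}_{p_0}\cup\mathcal{A}_{-p_0}$ (Corollary~\ref{cor:uniqueaubry}).
\end{itemize}
Uniqueness of the exceptional pair then needs no cut-and-paste inequality. The $v_b$-tangent orbits are unique (Lemma~\ref{lem:vunique}), so two special points share a homoclinic. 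By Lemma~\ref{lem:G-property}(2) they then share a normal direction, hence a segment of $\partial F_0$, contradicting nonlinearity.

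The other two steps are stated rather than proved.
\begin{itemize}
\item In (a), ``the exposed faces for slightly tilted normals would be flat'' is exactly the content of Theorem~\ref{main2}. The paper first shows $n_{p_0}$ is spanned by a unimodular pair $v_0,v_1\in G_{p_0}$. It then chooses periodic ECCUS in $\Sigma_0,\Sigma_1$ minimizing $I$, so that they bound a gap. If the edge were absent, Lemma~\ref{lem:approxgap} would produce a homoclinic of class $lv_0-v_1\notin\{v_0,v_1,v_0+v_1\}$, which is impossible.
\item In ``$\Leftarrow$'', you assert that the relevant curves avoid $O$ and form a lamination. Nothing you say excludes an ECCUS in $\mathcal{A}_{p_0}$ passing through one integer point. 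Lemma~\ref{lem:twopointintersection} only rules out two distinct such ECCUS meeting at two points. The paper uses no lamination. It handles possibly crossing ECCUS by choosing extremal pairs $\xi_{ai},\xi_{bi}$ that minimize $d$, proving $\sum_k|g_{\xi_{ai}}(k)-g_{\xi_{bi}}(k)|\le T_0$, and concluding that the Lipschitz function $l$ satisfies $l'=0$ a.e., which contradicts $|h(t)|\to\infty$.
\end{itemize}

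Finally, your ``standard fact'' identifying $n_p$ with normalized rotation vectors of Mather measures is not available at the minimal level. The Dirac mass at $(q,x)=(0,O)$ is a Mather measure for every $p\in F_0$, and its rotation vector is $0$. The paper instead relies on Lemma~\ref{lem:direction}, Lemma~\ref{lem:G-property}(2) and Corollary~\ref{cor:aubrynonempty}.
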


Note that the above result implies that flat edges are dense on $\partial F_0$.

In addition, the following theorem says that any non-differentiable point on $\partial F_0$ must be a vertex of cone lying on $\partial F_0$.  

\begin{theo}\label{main2} Given $n=2$ and {\bf Assumption (M)}, if $\partial F_0$ is not differentiable at $p\in \partial F_0$, then there exist two non-parallel integer vectors ${v_0},{v_1}\in\Zset^2$ and $\delta>0$ such that the following two hold:
\begin{enumerate}
\item The cone
\be\label{eq:coneequation}
\bigl\{\, p + t\,(-1)^i\,\tfrac{v_i^{\perp}}{\|v_i^{\perp}\|} \;:\; i=0,1,\ t\in[0,\delta] \,\bigr\}
\subset \partial F_0,
\ee
where $v_i^{\perp}$ denotes the $90^\circ$ counterclockwise rotation of $v_i$, and
\item These two integer vectors $v_0$ and $v_1$ form a positively oriented unimodular pair, i.e,
\[
\det A=1,
\]
where $A=(v_1^{\mathsf{T}}\,\, v_0^{\mathsf{T}})$ is the $2\times 2$ matrix whose columns are $v_1^{\mathsf{T}}$ and $v_0^{\mathsf{T}}$.
\end{enumerate}
\end{theo}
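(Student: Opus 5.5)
The plan is to obtain $F_0$ as a sublevel set of a support function built from closed curves, and to read off the geometry of $\partial F_0$ near a corner from that description. Since $\overline H\ge 0$ with $\min\overline H=0$, weak KAM theory identifies $F_0$ with the set of $p$ such that
\[
p\cdot \xi\le \rho(\xi)\quad\text{for all } \xi\in\Rset^2 .
\]
Here $\rho(\xi)$ is the homogenized (stable-norm-type) Maupertuis length at energy $0$,
\[
\rho(\xi)=\lim_{k\to\infty}\frac1k\,\inf\Bigl\{\int_\gamma \sqrt{-2V}\,|d\gamma|:\ \gamma \text{ from } 0 \text{ to } k\xi\Bigr\}.
\]
Thus $\rho$ is the support function of $F_0$, and it is convex and positively homogeneous. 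Because $V$ vanishes only on $\Zset^2$, every minimizer can pass through lattice points at no cost. Consequently, for $v\in\Zset^2$ one has $\rho(v)=\ell(v)$, where $\ell(v)$ is the minimal degenerate length of a curve from $0$ to $v$. I would also prove subadditivity on integer vectors: $\rho$ restricted to $\Zset^2$ is the largest subadditive function dominated by the lengths of the elementary loops through $O$.

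Non-differentiability at $p$ means that the normal cone $n_p$ is a nondegenerate arc, i.e.\ $p$ maximizes $p'\cdot q$ over $F_0$ for every $q$ in an open cone $C$. On $C$ the support function is then linear, $\rho(q)=p\cdot q$. The first step is to show that the two boundary directions of $n_p$ are rational, say spanned by $v_1$ and $v_0$. I would argue this from the 2D topology of minimizers. For irrational $q$ the minimizing measures are supported on a lamination or on curves asymptotic to $O$, and strict convexity of the stable norm in a neighborhood would give differentiability there (the argument of \cite{B2} adapted to $c=0$ at irrational directions). Hence an edge of $n_p$ at an irrational direction would contradict the known result that the outer normals are irrational nonlinear points. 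More concretely, the extreme directions of a linear piece of $\rho$ must be directions realized by closed homoclinic loops through $O$, and these are integer.

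The second step, which I expect to be the main obstacle, is to show that $v_0$ and $v_1$ are primitive homoclinic classes with $\det(v_1,v_0)=1$, and that $\partial F_0$ contains the segments in directions $\pm v_i^\perp$. The segments come from non-strict convexity of $\rho$ just outside the cone. At the integer direction $v_i$, the minimal homoclinic loop $\gamma_i$ of class $v_i$ can be concatenated with itself at $O$, so $\rho$ at $v_i$ is achieved by these loops. Near $v_i$ but outside $C$, $\rho$ is generated by combinations $m v_i+ w$ of $\gamma_i$ with the neighbouring loop. This makes $\rho$ piecewise linear at $v_i$ with a kink, and the kink produces a flat edge orthogonal to $v_i$ emanating from $p$, pointing in the direction $(-1)^i v_i^\perp$.

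For unimodularity, the key fact is that minimal loops $\gamma_0$ and $\gamma_1$ through $O$ cannot cross transversally away from $O$. Otherwise a cut-and-paste argument shortens one of them, and this is where gluing at the stagnation point must be handled carefully. Being embedded, based at $O$, and mutually non-crossing on $\mathbb T^2$, two such loops in nonparallel classes must have algebraic intersection number $\pm1$, since they intersect only at $O$. Ordering them by orientation gives $\det A=1$. The Hessian assumption with $a<b$ enters here to control how the loops leave and enter $O$, along $\pm v_a$ generically, so that the local intersection at $O$ counts as a single transverse or tangential crossing.

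Finally, I would pick $\delta$ small so that both segments stay inside the faces $\{p'\in F_0: p'\cdot v_i=\rho(v_i)\}$, which have positive length by the kink computation.
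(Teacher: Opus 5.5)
Your proposal has a genuine gap at its central step, and a second one in Step 1.

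\textbf{The flat edges (Step 2).} The claim that $\partial F_0$ contains a segment from $p$ in direction $(-1)^iv_i^{\perp}$ is equivalent to your $\rho$ not being differentiable at $v_i$. You assert this rather than prove it. You offer two ingredients: a homoclinic loop $\gamma_i$ realizing $\rho(v_i)$, and the claim that just outside $C$ the value of $\rho$ comes from concatenating copies of $\gamma_i$ with a fixed ``neighbouring loop''. The second has no support. Nothing produces such a loop, and nothing forces minimizers of class $mv_i+w$ to pass through $O$ and split.

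The first ingredient alone cannot give a kink. In Example~\ref{example:nonremovable} the class $(0,1)$ is realized by a homoclinic loop in $\mathcal A_{p_0}$. Yet $p_0$ is a nonlinear point, so $\rho$ is differentiable at $(0,1)$. There the periodic minimizers fill the plane, which is exactly the obstruction of Step~I in Section~\ref{section:sketch-of-proof}. A kink at $v_0$ therefore has to come from excluding such filling. That exclusion uses the second class $v_1$, which your Step~2 never invokes.

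The paper argues by contradiction, as follows.
\begin{enumerate}
\item If no segment leaves $p$ in direction $v_0^{\perp}$, then Assumptions~1--2 of Section~\ref{sec:main} hold.
\item The $\gamma_1$-chain makes the strip between periodic class-$v_0$ ECCUS through $0$ and through $-v_1$ free of lattice points. Minimizing $\int_U (1+|x|^4)^{-1}\,dx$ then turns this strip into a gap.
\item Lemma~\ref{lem:approxgap} approximates $p$ along $\partial F_0$ from the side with no flat edge and uses Remark~\ref{rmk:upperaubry}. This gives a curve in $\mathcal A_p$ crossing the gap.
\item That curve meets translates of $\gamma_1$ only at lattice points. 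Hence it contains a homoclinic of class $lv_0-v_1$, which lies outside $\{v_0,v_1,v_0+v_1\}\supset G_p$, a contradiction.
\end{enumerate}

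\textbf{Rationality of the edges of $n_p$ (Step 1).} Appealing to the ``if'' part of Theorem~\ref{main} only rules out an irrational endpoint $q$ of $n_p$ whose exposed face is a segment. If that face is $\{p\}$ (a corner with a strictly convex arc on one side), nothing is contradicted. The statement ``extreme directions are realized by homoclinic loops'' is the conclusion itself, not an argument for it.

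What is needed is the following. For two non-parallel rational directions in $n_p$, the periodic ECCUS in $\mathcal A_p$ from Corollary~\ref{cor:aubrynonempty} must cross. They can cross only at lattice points, so they are chains of homoclinics. Hence every rational direction of $n_p$ is a positive combination of elements of $G_p$. Lemma~\ref{lem:G-property}(3) then forces $n_p$ to be the arc spanned by a unimodular pair $v_0,v_1\in G_p$.

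Your cut-and-paste idea can supply the ``only at lattice points'' step, but the mechanism is different from what you wrote. The contradiction is with linearity of $\rho$ on $C$, not with minimality of either loop. Joining at a crossing gives a cornered loop of class $w_1+w_2$ with length $\rho(w_1)+\rho(w_2)=\rho(w_1+w_2)$, and that loop can be shortened.

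\textbf{Two smaller corrections.} First, $\rho(v)=\ell(v)$ for all $v\in\Zset^2$ is false in general, and so is the ``largest subadditive'' description. Surround each maximum by a ring where $-V$ is huge, as in Example~\ref{example:removable}. Then $\ell(v)$ becomes arbitrarily large, while closed curves of class $v$ avoiding the rings keep $\rho(v)$ bounded. Second, the Hessian plays no role in unimodularity. A single common point of two simple closed curves has local intersection index in $\{-1,0,1\}$ for purely topological reasons.
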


\begin{rmk}\label{rmk:example}  In  section 6, we have constructed three representative examples demonstrating the sharpness of our results. 

\medskip

$\bullet$ Example \ref{example:nonremovable}:  $\partial F_0$ is $C^1$ and case (1) in Theorem \ref{main}  occurs. In addition, $\overline H$ is nowhere differentiable on $\partial F_0$, which can be easily extended to all dimensions. The example is stable under small perturbation of $V$. See \cite{ZZ2012} for another stable non-differentiability example.

$\bullet$ Example \ref{example:removable}: Case 2 in Theorem \ref{main} occurs. This is a rare situation. 

$\bullet$ Example \ref{example:separable}:  $\partial F_0$ is a rectangle. In this case, $\partial F_0$ has no nonlinear points. Note that Theorem \ref{main}  implies that $\partial F_0$ has at least four flat edges.  In additin,  a special case also provides an example where $F_c$ contains exactly two flat edges for $c>0$.

\end{rmk}
\medskip

{\bf Outline of the Paper.} In Section~2, we review some basic definitions and properties from Aubry–Mather theory and weak KAM theory. We also outline the overall proof strategy and highlight the main novelty of our approach compared to existing results  about $F_c$ when $c>0$. Section~3 is devoted to the proof of Theorem~\ref{main}, while Section~4 contains the proof of Theorem~\ref{main2}. In Section~5, we establish a local approximation result for minimizing curves near the maximum point $(0,0)$, which plays a central role in the proof of Theorem~\ref{main}. Section~6 presents the three examples mentioned in the preceding Remark \ref{rmk:example}. Finally, in Section~7 (Appendix A), we provide an alternative proof that \( F_0 \) is an \( n \)-dimensional convex set when $V$ has finitely many maximum points by constructing subsolution to (\ref{mech-cell}). In Appendix B and C, we establish several technical results used throughout the paper.

\section{Preliminary} For readers' convenience, we give a brief review about some relevant knowledge of Hamilton-Jacobi equations, Aubry-Mather theory and the weak KAM theory. See \cite{W-E, EG, F, Lions, Tran}  for more details. Our approach will mainly be from the point of view of PDEs.  Let $\Bbb T^n=\Rset^n/\Zset^n$ be the $n$-dimensional flat torus and $H(p,x)\in C^{\infty}(\Rset^n\times \Rset^n)$ be a Hamiltonian satisfying
\begin{itemize}
\item[(H1)] (Periodicity) $x\mapsto H(p,x)$ is $\Zset^n$-periodic;

\item[(H2)] (Uniform convexity) There exists $c_0>0$ such that for all $\eta=(\eta_1,...,\eta_n)\in \Rset ^n$,
and $(p,x)\in \Rset^n \times \Rset^n$,
$$
\sum_{i,j=1}^{n}\eta_{i}{\partial ^2H\over \partial p_i\partial
p_j}\eta_{j}\geq c_0|\eta|^2.
$$
\end{itemize}
Denote by
$$
L(q,x)=\sup_{p\in \Rset^n}\{q\cdot p-H(p,x)\}
$$
the Lagrangian associated with $H$. 

The following is a basic and well known property.  For reader's convenience, we present its proof here. 

\begin{lem}\label{calib} Let $U$ be an open subset of $\Rset^n$. Assume that for some $c\in \Rset$, $w\in W^{1,\infty}(U)$ satisfies that 
$$
H(Dw,x)\leq c \quad \text{for a.e $x\in U$},
$$
i.e., $w$ is a viscosity subsolution to $H(Dw,x)=c$.  Then for any $\eta\in AC([t_1,t_2], U)$,
$$
\int_{t_1}^{t_2} (L(\dot\eta(t),\eta (t))-c)\,dt\geq w(\eta(t_2))-w(\eta(t_1)).
$$
Here $ AC([a,b], S)$ stands for the set of absolutely continuous curves $[a,b]\to S$. 

\end{lem}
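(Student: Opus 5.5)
The plan is to reduce the inequality to the pointwise Fenchel--Young inequality
\[
L(q,x)+H(p,x)\geq q\cdot p ,
\]
which holds by the definition of $L$ as the Legendre transform. If $w$ were $C^1$, then along $\eta$ we would have $\frac{d}{dt}w(\eta(t))=Dw(\eta(t))\cdot\dot\eta(t)\le L(\dot\eta,\eta)+H(Dw(\eta),\eta)\le L(\dot\eta,\eta)+c$. Integrating over $[t_1,t_2]$ gives the claim at once. The whole difficulty is that $w$ is only Lipschitz, and the inequality $H(Dw,x)\le c$ holds only almost everywhere. The set of times where $\eta(t)$ falls in the exceptional null set need not be small, since a curve can run entirely inside a null set.

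I will get around this by mollification. Fix a compact set $K\subset U$ containing $\eta([t_1,t_2])$, and let $w_\varepsilon=w*\rho_\varepsilon$, which is defined on a neighborhood of $K$ for small $\varepsilon$. Then $w_\varepsilon$ is smooth, $w_\varepsilon\to w$ uniformly on $K$, and $Dw_\varepsilon=(Dw)*\rho_\varepsilon$ is bounded by $\|Dw\|_\infty$. Since $H$ is convex in $p$, Jensen's inequality gives
\[
H(Dw_\varepsilon(x),x)\le \int H(Dw(y),x)\rho_\varepsilon(x-y)\,dy\le c+\omega(\varepsilon),
\]
where $\omega(\varepsilon)\to0$ is a modulus of continuity of $H$ in $x$. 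This modulus is uniform over $|p|\le\|Dw\|_\infty$ because $H$ is continuous and periodic in $x$. So $w_\varepsilon$ is a classical subsolution of $H\le c+\omega(\varepsilon)$ on $K$.

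Next I apply the smooth argument to $w_\varepsilon$. The function $t\mapsto w_\varepsilon(\eta(t))$ is absolutely continuous, with derivative $Dw_\varepsilon(\eta)\cdot\dot\eta$ for a.e.\ $t$. Fenchel--Young then yields
\[
w_\varepsilon(\eta(t_2))-w_\varepsilon(\eta(t_1))\le\int_{t_1}^{t_2}\bigl(L(\dot\eta,\eta)-c\bigr)\,dt+\omega(\varepsilon)(t_2-t_1).
\]
If $\int L$ is $+\infty$ there is nothing to prove. Otherwise I let $\varepsilon\to0$ and use uniform convergence of $w_\varepsilon$ at the endpoints, which finishes the proof.

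The main obstacle is the step that passes from the a.e.\ inequality to an inequality along an arbitrary curve, and mollification plus convexity in $p$ is exactly what handles it. The other points are routine bookkeeping: $L$ is bounded below, by $-\max_x H(0,x)$, so the integral is well defined; and the domain shrinks under mollification, which is handled by working on the compact set $K$.
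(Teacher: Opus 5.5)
Your method is the same as the paper's: reduce to a $C^1$ subsolution by mollification, then integrate the Fenchel--Young inequality $L(q,x)+H(p,x)\ge q\cdot p$ along $\eta$. The paper compresses the reduction into the phrase ``by mollification and approximation''. Your Jensen step is what justifies that phrase: it uses convexity of $H$ in $p$ to get $H(Dw_\varepsilon,x)\le c+\omega(\varepsilon)$ near the compact set $\eta([t_1,t_2])$. Your remarks on the exceptional null set and on $L$ being bounded below are also correct and relevant.

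There is, however, a sign error. Fenchel--Young together with $H(Dw_\varepsilon(\eta),\eta)\le c+\omega(\varepsilon)$ gives
\[
w_\varepsilon(\eta(t_2))-w_\varepsilon(\eta(t_1))\le\int_{t_1}^{t_2}\bigl(L(\dot\eta,\eta)+c\bigr)\,dt+\omega(\varepsilon)(t_2-t_1),
\]
with $+c$. Your own first paragraph also derives $L+c$, so your displayed estimate with $L-c$ does not follow from your own inequalities.

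The $-c$ in the printed statement is a typo, and with $-c$ the statement is false. Take $H=\frac12|p|^2$ (so $L=\frac12|q|^2$), $w\equiv0$, $c=1$ and $\eta$ constant. Then the left side equals $-(t_2-t_1)<0$, while the right side is $0$.

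The intended inequality is $\int_{t_1}^{t_2}(L(\dot\eta,\eta)+c)\,dt\ge w(\eta(t_2))-w(\eta(t_1))$. This is the version the paper actually uses later: global characteristics are calibrated with $L+\overline H(p)$, and $h_p$ is built from $\frac12|\dot\eta|^2-V+\overline H(p)$. Once you correct the sign, your argument proves exactly this inequality.
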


Proof:  By mollification and approximation,  we may assume that $w$ is $C^1$.  Then
$$
L(\dot \xi(t), \xi(t))+H(Dw(\xi(t),\xi(t))\geq  Dw(\xi(t))\cdot \dot \xi(t).
$$
Taking integration on both sides leads to the conclusion.\qed

\subsection{Global characteristics}

Given $p\in \Rset^n$, let $v$ be a $\Zset^n$-periodic viscosity subsolution  to (\ref{cell}), i.e., $H(p+Dv, x)\leq \overline H(p)$ for a.e $x\in \Rset^n$.  We say that a Lipschitz continuous curve $\gamma:\Rset\to \Rset ^n$ is a {\it global characteristics} associated with $v$  if for all $t_1<t_2$ and $u=p\cdot x+v$, 
$$
\int_{t_1}^{t_2}L(\dot \gamma,\gamma)+\overline H(p)\,ds=u(\gamma(t_2))-u(\gamma(t_1)).
$$
\noindent
Note that this is equivalent to stating that $\gamma$ is {\it $(u, L, \overline{H}(p))$-calibrated} in the sense of~\cite{F}. Here, however, we choose to  use standard PDE terminology. Also, if we restrict $t_1,t_2$ to an given interval, then $\gamma$ is a local characteristics. 

Let
$$
J_v=\{\text{global characteristics associated with $v$}\}.
$$
According to the standard theory in Hamilton-Jacobi equation (\cite{Lions}),  $v$ is $C^1$ along a characteristics and 
$$
\dot \gamma(t)=D_pH(p+Dv(\gamma(t)),\gamma(t)) \quad \text{for all $t\in \Rset$.}
$$
See also \cite{F}.

Hence,  it is natural to understand the structure of global characteristics.  A curve $\xi: \Rset\to \Bbb T^n$ is called an {\it absolutely minimizing curve with respect to
$L(q,x)+\overline H(p)$} in $\Rset ^n$, i.e., for any
$-\infty<s_2<s_1<\infty$, $-\infty<t_2<t_1<\infty$ and $\eta\in AC([s_1,s_2], \Rset^n)$ subject to $\eta (s_2)=\xi (t_2)$ and $\eta (s_1)=\xi (t_1)$
the following inequality holds, 
\be\label{abs}
\int_{s_1}^{s_2} \left( L(\dot \eta(s),\eta(s))+\overline H(p)\right)\,ds\geq 
\int_{t_1}^{t_2} \left( L(\dot\xi(t),\xi (t))+\overline H(p) \right)\,ds.
\ee

Here are facts which are well known to experts.

\medskip

\textbf{(P1)} By Lemma~\ref{calib}, every global characteristic $\gamma$ is an absolutely minimizing curve and therefore satisfies the Euler–Lagrange equation:
\[
\frac{d}{dt} \big( D_q L(\dot{\gamma}(t), \gamma(t)) \big) = D_x L(\dot{\gamma}(t), \gamma(t)) \quad \text{for all } t \in \mathbb{R}.
\]
This relationship is one of the key reasons behind the connection between the cell problem~\eqref{cell} and Aubry–Mather theory, which studies the structure of absolutely minimizing curves (or minimizing geodesics).

\medskip

{\bf (P2)} Two different absolutely minimizing curves can not intersect twice. 

\subsubsection{More Properties for $H(p,x)={1\over 2}|p|^2+V(x)$}

In this section, we define several key quantities and establish two properties of the characteristics that will be used in the subsequent analysis. The corresponding cell problem becomes 
\be\label{mech-cell}
{1\over 2}|p+Dv|^2+V(x)=\overline H(p).
\ee

For $x, y\in \Rset^2$ and $p\in \Rset^2$, we define
$$
h_p(x,y)=\inf_{ \substack{\eta\in AC([0,t], \Rset^2),\ t\geq 0 \\ \xi(0)=x,\ \xi(t)=y}}\int_{0}^{t}\left({1\over 2}|\dot \eta|^2-V(\eta)+\overline H(p)\right)\,ds
$$
and
$$
h(x,y)=\inf_{ \substack{\eta\in AC([0,t], \Rset^2),\ t\geq 0 \\ \xi(0)=x,\ \xi(t)=y}}\int_{0}^{t}\left({1\over 2}|\dot \eta|^2-V(\eta)\right)\,ds.
$$

\medskip

The following properties of $h_p(x,y)$ are obvious. We leave the proof to readers. 
\begin{lem}\label{h-bound}

(1)For $x, y\in \Rset^2$ and $p\in \Rset^2$,
$$
h_p(x,y)\geq p\cdot (y-x)+v(y)-v(x),
$$
for any viscosity subsolution $v$ to the cell problem (\ref{mech-cell}).

(2) For every $\delta>0$,  
$$
\omega({\delta})=\inf_{\{x,y, p\in \Rset^2, \ |x-y|\geq \delta\}}h(x,y)>0.
$$

(3) If \( x \notin \mathbb{Z}^2 \), then there exists \( r=r_x > 0 \) such that for any \( y \in B_r(x) \), there exist \( T > 0 \) and \( \eta \in AC([0, T], \mathbb{R}^2) \) satisfying $\eta(0) = x$, $\quad \eta(T) = y$ and
$$
h_p(x,y)=\int_{0}^{T}\left({1\over 2}|\dot \eta|^2-V(\eta)+\overline H(p)\right)\,ds.
$$
\end{lem}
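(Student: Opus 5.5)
We take the three items of Lemma~\ref{h-bound} in order. Throughout we use $V\le 0$ and $\overline H(p)\ge 0$, so the integrand in $h_p$ is nonnegative.

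For (1), fix a $\Zset^2$-periodic viscosity subsolution $v$ of (\ref{mech-cell}) and set $w(x)=p\cdot x+v(x)$. Then $w$ is Lipschitz and satisfies $\frac12|Dw|^2+V(x)\le \overline H(p)$ a.e. The Lagrangian of $H$ is $L(q,x)=\frac12|q|^2-V(x)$. Applying Lemma~\ref{calib} with $c=\overline H(p)$ to any admissible $\eta$ from $x$ to $y$ gives
\[
\int_0^t\Bigl(\tfrac12|\dot\eta|^2-V(\eta)+\overline H(p)\Bigr)ds\ \ge\ w(y)-w(x).
\]
Taking the infimum over $\eta$ and $t$ yields (1). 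Note that Lemma~\ref{calib} is stated with $-c$, so we apply it with $c=-\overline H(p)$ in its sign convention. Equivalently, $w$ is a subsolution of $H\le \overline H(p)$, and the inequality $L+H(Dw)\ge Dw\cdot\dot\eta$ is then integrated directly.

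For (2), the starting point is the elementary inequality $\frac12|\dot\eta|^2+(-V)\ge \sqrt{-2V}\,|\dot\eta|$. It gives
\[
h(x,y)\ \ge\ \inf_\eta \int \sqrt{-2V(\eta)}\,|\dot\eta|\,ds,
\]
which is the length of $\eta$ in the degenerate metric $\sqrt{-2V}\,|dx|$. Fix $r<\delta/4$, small enough that the balls $B_r(k)$, $k\in\Zset^2$, are pairwise disjoint. On the complement of these balls, $-V\ge m_r>0$ by Assumption (M).
\begin{itemize}
\item If $\eta$ passes within distance $r/2$ of some lattice point $k$, then either it exits $B_r(k)$, crossing the annulus $B_r(k)\setminus B_{r/2}(k)$, or it remains inside $B_r(k)$. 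The latter is impossible for an endpoint separation $|x-y|\ge\delta>2r$ unless both endpoints are near the same lattice point, which is again impossible since $\delta>2r$.
\item Otherwise, a segment of Euclidean length at least $\min(\delta,r)/2$ lies in the region where $-V\ge m_{r/2}$.
\end{itemize}
In either case the weighted length is at least $\sqrt{2m_{r/2}}\cdot r/2>0$, uniformly in $x,y$. This lower bound is $\omega(\delta)$.

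For (3), the main point, and the main obstacle, is attainment. The time interval $[0,t]$ is free, so compactness of minimizing sequences can fail if $t\to\infty$ or if sequences escape toward a degenerate point. Choose $r_x$ with $\overline{B_{2r}(x)}\cap\Zset^2=\emptyset$, so that $-V+\overline H(p)\ge m>0$ on $B_{2r}(x)$. Then:
\begin{itemize}
\item Any curve leaving $B_{2r}(x)$ costs at least $\sqrt{2m}\,r$.
\item For $y\in B_{r'}(x)$ with $r'$ small, the straight segment traversed at suitable speed costs at most $C|y-x|<\sqrt{2m}\,r$.
\end{itemize}
Hence near-minimizers stay in $B_{2r}(x)$. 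There the cost dominates $m\,t$, so the times $t$ are bounded above. The cost also dominates $\frac12\int|\dot\eta|^2$, giving equi-Hölder bounds. After reparametrizing to $[0,1]$, the problem becomes minimizing $\frac{1}{2t}\int_0^1|\dot\zeta|^2+t\int_0^1(\overline H(p)-V(\zeta))$. The case $t\to0$ is excluded for $y\ne x$, since the first term blows up; for $y=x$ we take $T=0$. Arzelà–Ascoli together with weak lower semicontinuity of $\int|\dot\zeta|^2$ then produces a minimizer $\eta$ on $[0,T]$.
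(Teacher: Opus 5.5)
Your proof is correct. The paper gives no argument for this lemma (it is declared obvious and left to the reader), so there is nothing to compare against beyond the tools the paper uses elsewhere, and your steps match those tools:
\begin{itemize}
\item for (1), the calibration inequality of Lemma~\ref{calib};
\item for (2) and for the localization in (3), the pointwise bound $\frac12|\dot\eta|^2-V(\eta)+\overline H(p)\ge\sqrt{2(\overline H(p)-V(\eta))}\,|\dot\eta|$, which is the same bound used in the proof of Lemma~\ref{lem:lower-bound};
\item for the attainment in (3), the direct method on a ball whose closure avoids $\Zset^2$.
\end{itemize}

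There are two small points to fix.

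In (1), drop the remark about applying Lemma~\ref{calib} ``with $c=-\overline H(p)$''. That substitution is not valid, because the hypothesis of the lemma would then read $H(Dw,x)\le-\overline H(p)$, which you do not have. The printed statement of Lemma~\ref{calib} has a sign typo: its proof actually gives $\int_{t_1}^{t_2}(L+c)\,dt\ge w(\eta(t_2))-w(\eta(t_1))$. Your direct derivation from $L+H(Dw,x)\ge Dw\cdot\dot\eta$, applied after the mollification step in that proof, is the right argument.

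In (3), your choice $T=0$ for $y=x$ is necessary, not just a convenience. Since $-V(x)>0$, any curve on $[0,T]$ with $T>0$ has strictly positive cost, whereas $h_p(x,x)=0$. So the statement's requirement $T>0$ can only hold for $y\neq x$.
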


\begin{lem}\label{lem:lower-bound} Given $p\in \Rset^2$,  let $\xi: \Rset\to \Rset^2$ be a global characteristics associated a viscosity subsolution $v$ to (\ref{cell}). Then for any $\epsilon>0$, there exists a constant $\delta_{\epsilon}$ depending only on $\epsilon$ and $V$ such that for all $t_1<t_2$, 
$$
 \int_{t_1}^{t_2}|\dot \xi(s)|\,ds\geq \epsilon \quad \Rightarrow \quad   u(\xi(t_2))-u(\xi(t_1))\geq \delta_\epsilon.
$$
Here $u(x)=p\cdot x+v(x)$. Note that the left hand side is the length of the curve between $\xi(t_1)$ and $\xi(t_2)$. 
\end{lem}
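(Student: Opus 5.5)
The idea is to combine the calibration identity along $\xi$ with the lower bound on the action $h$ from Lemma~\ref{h-bound}(2), after chopping a long piece of $\xi$ into short pieces.

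\emph{Calibration.} Since $\xi$ is a global characteristic, for $t_1<t_2$ we have
$$
u(\xi(t_2))-u(\xi(t_1))=\int_{t_1}^{t_2}\Bigl(\tfrac12|\dot\xi|^2-V(\xi)+\overline H(p)\Bigr)\,ds .
$$
By the standing normalization $\max V=0$, we have $\overline H(p)\ge 0$. Hence the right-hand side is at least $\int_{t_1}^{t_2}\bigl(\tfrac12|\dot\xi|^2-V(\xi)\bigr)\,ds$. Because $\xi$ restricted to $[t_1,t_2]$ is an admissible curve in the infimum defining $h$, this quantity is in turn at least $h(\xi(t_1),\xi(t_2))$. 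So it suffices to bound the action of $\xi$ from below when its length is at least $\epsilon$. The subtlety is that the endpoints may be close, or even equal (think of a curve near a homoclinic loop). Lemma~\ref{h-bound}(2) controls only displacement, not length, so we must subdivide.

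\emph{Subdivision.} Assume $\int_{t_1}^{t_2}|\dot\xi|\,ds\ge\epsilon$. The map $t\mapsto\int_{t_1}^t|\dot\xi|$ is continuous and goes from $0$ to at least $\epsilon$. Choose $t_1\le s_1<s_2\le t_2$ with $\int_{s_1}^{s_2}|\dot\xi|=\epsilon$, and split $[s_1,s_2]$ into subintervals of length $\epsilon/2$ in arclength. The trouble is that a single subinterval may still have small displacement, since the curve could fold back. To avoid this, I use instead the running maximum of displacement.

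\emph{Case split.} Either some point $s\in[t_1,t_2]$ satisfies $|\xi(s)-\xi(t_1)|\ge\epsilon/4$, or the whole arc stays in the ball $B_{\epsilon/4}(\xi(t_1))$.

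In the first case, the integrand is nonnegative because $-V\ge0$ and $\overline H\ge0$. So the action over $[t_1,t_2]$ dominates the action over $[t_1,s]$, which is at least $h(\xi(t_1),\xi(s))\ge\omega(\epsilon/4)>0$.

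In the second case, the curve has length $\ge\epsilon$ inside a ball of radius $\epsilon/4$, so we cannot use displacement. Instead we estimate the action directly. By Cauchy--Schwarz, $\int\tfrac12|\dot\xi|^2\ge \epsilon^2/(2(t_2-t_1))$, which is good when the time span is short. On the other hand, $\int(-V(\xi))\ge (t_2-t_1)\min_{B}(-V)$, which is good when the time span is long, provided the ball avoids $\mathbb Z^2$. Optimizing in $T=t_2-t_1$ gives a lower bound of order $\epsilon\sqrt{\min_B(-V)/2}$. The problem is that the ball $B_{\epsilon/4}(\xi(t_1))$ may contain a lattice point, where $-V$ vanishes.

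\emph{Main obstacle: a curve lingering near a lattice point $k\in\mathbb Z^2$.} For this case I would use that $\xi$ is a characteristic, hence $C^1$ and satisfying energy conservation $\tfrac12|\dot\xi|^2+V(\xi)=\overline H(p)$. Consequently $|\dot\xi|=\sqrt{2(\overline H(p)-V(\xi))}$, and the action density equals
$$
\tfrac12|\dot\xi|^2-V+\overline H=|\dot\xi|^2=|\dot\xi|\sqrt{2(\overline H-V)}\ \ge\ |\dot\xi|\sqrt{-2V(\xi)}.
$$
The action is therefore at least the degenerate-metric length $\int\sqrt{-2V(\xi)}\,|\dot\xi|\,ds$. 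A curve of Euclidean length $\epsilon$ must spend at least length $\epsilon/2$ outside the ball $B_{\epsilon/8}(k)$ around each nearby lattice point. There are only finitely many such points, at most one when $\epsilon<1/2$; otherwise we shrink $\epsilon$, which only weakens the claim. If the curve does not spend length $\epsilon/2$ outside, it has length at least $\epsilon/2$ inside $B_{\epsilon/8}(k)$, and it must then re-enter and exit that small ball repeatedly. I would handle this either by noting that each crossing of the annulus $B_{\epsilon/4}\setminus B_{\epsilon/8}$ costs a definite amount $\ge\omega$-type positive energy, or by bounding the length inside $B_{\epsilon/8}(k)$ via the same argument at a smaller scale. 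On the remaining length $\epsilon/2$ outside, we have $-V\ge c_\epsilon>0$, giving action $\ge \tfrac{\epsilon}{2}\sqrt{2c_\epsilon}$. Taking $\delta_\epsilon$ to be the minimum of this quantity and $\omega(\epsilon/4)$ completes the proof. The step I expect to be delicate is controlling length accumulated very close to the maximizer, where the degenerate metric vanishes. The energy identity together with the fact that an absolutely minimizing curve cannot spiral indefinitely near $k$ (by (P2)-type arguments) is what I would invoke there.
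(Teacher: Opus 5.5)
You have a genuine gap, in the final case. Your preliminary reductions are sound: the calibration identity, $\overline H(p)\ge 0$, nonnegativity of the integrand, the displacement case via $\omega(\epsilon/4)$, and the energy identity giving action $\ge\int\sqrt{2(\overline H(p)-V(\xi))}\,|\dot\xi|\,ds$ are all correct.

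The problem is the case you flag yourself, and it is the whole content of the lemma. You need a bound, uniform in $p$, $v$ and $\xi$, on the Euclidean length a characteristic can accumulate inside a small ball $B_\rho(k)$, $k\in\Zset^2$, where the weight $\sqrt{-2V}$ degenerates. Your assertion that the arc ``must spend at least length $\epsilon/2$ outside $B_{\epsilon/8}(k)$'' is precisely this bound, and none of your proposed mechanisms proves it.
\begin{itemize}
\item Counting crossings of the annulus $B_{\epsilon/4}(k)\setminus B_{\epsilon/8}(k)$ says nothing about length accumulated while the arc stays inside $B_{\epsilon/8}(k)$.
\item Repeating the argument ``at a smaller scale'' only gives action $\gtrsim r\,\ell_r$ for the length $\ell_r$ spent at distance $\sim r$ from $k$. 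That is useless unless you already know $\ell_r=O(r)$ summably over dyadic scales.
\item (P2) concerns two distinct minimizers intersecting. An embedded arc can still have arbitrarily large length in a small ball, and ``cannot spiral indefinitely'' is qualitative, not a uniform estimate.
\end{itemize}
The same uniformity problem already appears in your subcase where the ball avoids $\Zset^2$ but comes close to it, since then $\min_B(-V)$ is not bounded below. Note also that your argument never uses any information about $V$ near its maximizers beyond $V<0$ off $\Zset^2$.

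The paper supplies the missing estimate through the nondegeneracy of the maximum. It first makes a compactness reduction: if the lemma failed, small action forces $\overline H(p_m)\to0$ and the arcs collapse onto a lattice point. It then uses $\ddot\xi=-DV(\xi)$. The function $h(t)=|\dot\xi(t)|^2$ satisfies $h''=-2\dot\xi\cdot D^2V(\xi)\dot\xi+2|DV(\xi)|^2\ge a^2h$ near the maximizer. Lemma~\ref{exp-decay-1} then gives $|\dot\xi(t)|\le\sqrt{h(t_1)}\,e^{-a(t-t_1)/2}+\sqrt{h(t_2)}\,e^{-a(t_2-t)/2}$, hence length $\le\frac{2}{a}\bigl(\sqrt{h(t_1)}+\sqrt{h(t_2)}\bigr)$. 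Energy conservation $h=2(\overline H(p)-V(\xi))$ makes the endpoint speeds small.

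If you prefer to keep your direct approach, minimality can do the job instead, again using $c|x-k|^2\le -V(x)\le C|x-k|^2$ near $k$.
\begin{itemize}
\item By calibration and Lemma~\ref{h-bound}(1), the action of $\xi$ between its first and last visits to $\overline{B_r(k)}$ is at most that of the straight segment, i.e.\ at most $2r\sqrt{2(Cr^2+\overline H(p))}$.
\item The part of that arc lying in the annulus $B_r(k)\setminus B_{r/2}(k)$ costs at least $\sqrt{2(cr^2/4+\overline H(p))}$ per unit length.
\item So that part has length $\le 4r\sqrt{C/c}$, and summing over dyadic annuli bounds the length in $B_r(k)$ by $8r\sqrt{C/c}$.
\end{itemize}
Choosing $r$ a small multiple of $\epsilon$ then gives exactly the ``length $\ge\epsilon/2$ away from the lattice'' that your last step needs.
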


Proof: Fix $\epsilon > 0$ and argue by contradiction. If the conclusion is false, then by applying suitable translations in time and space, there exists a sequence of vectors $\{p_m\}_{m \geq 1}$, associated viscosity subsolutions $v_m$ to~\eqref{cell}, and corresponding global characteristics $\xi_m$ such that
\[
\xi_m(0) \in \left[-\tfrac{1}{2}, \tfrac{1}{2}\right]^2,
\]
and for some $t_m>0$
$$
 \int_{0}^{t_n}|\dot \xi_n(s)|\,ds\geq \epsilon \quad \mathrm{and} \quad   \int_{0}^{t_m}\left({1\over 2}|\dot \xi_m|^2-V(\xi_m)+\overline H(p_m)\right)\,ds\leq {1\over m} .
$$
By (1) in Lemma \ref{h-bound}, for $0<\theta_1<\theta_2<t_m$,
$$
h_{p_n}(\xi_m(\theta_1), \xi_m(\theta_2))=\int_{\theta_1}^{\theta_2}\left({1\over 2}|\dot \xi_m|^2-V(\xi_m)+\overline H(p_m)\right)\,ds.
$$
Since
$$
{1\over 2}|\dot \xi_m|^2-V(\xi_m)+\overline H(p_m)\geq \sqrt{2}\sqrt{\overline H(p_m)-V(\xi_m)}|\dot\xi_m(t)|,
$$
we have that 
\be\label{zero-limit}
\lim_{m\to \infty}\overline H(p_m)=0  \quad \mathrm{and} \quad \lim_{m\to \infty}\xi_m([0,t_m])=(0,0).
\ee
Let 
$$
h_m(t)=|\dot \xi_m(t)|^2.
$$
Then
$$
\ddot \xi_m=-DV(\xi_m)\ \Rightarrow \ h_m'(t)=2\ddot \xi_m(t)\cdot \dot \xi_m(t)=-2DV(\xi_m(t))\cdot \dot \xi_m(t).
$$
When $m$ is large enough,  (\ref{zero-limit}) implies that
$$
h_m''(t)=-2\dot \xi_m(t)\cdot D^2V(\xi_m(t))\cdot \dot \xi_m(t)+2|DV(\xi_m(t))|^2\geq a^2h_m(t).
$$
Due to Lemma \ref{exp-decay-1}, 
$$
h_m(t)\leq h_m(0){e^{-t\sqrt{a}}}+h_m(t_m)e^{-(t_m-t)\sqrt{a}}.
$$
Hence
$$
|\dot \xi_m(t))|=\sqrt{h_m(t)}\leq  \sqrt{h_m(0)}{e^{-{t\sqrt{a}\over 2}}}+\sqrt{h(t_m)}e^{-{(t_m-t)\sqrt{a}\over 2}}.
$$
Therefore
$$
\int_{0}^{t_m}|\dot \xi_m(s)|\,ds\leq  {2\over \sqrt{a}}\left(\sqrt{h_m(0)}+\sqrt{h_m(t_m)}\right).
$$
Since 
$$
{1\over 2}h_m(t)+V(\xi_m(t))=\overline H(p_m),
$$
(\ref{zero-limit}) leads to 
$$
\lim_{m\to \infty}\int_{0}^{t_m}|\dot \xi_m(t))|\,dt=0,
$$
which is absurd.  \qed

As an immediate corollary,  we can derive that the length of a global characteristics between two points is controlled by their Euclidean distance.
\begin{cor}\label{cor:lower-gap} Given $p\in \Rset^2$.  Let $\xi: \Rset\to \Rset^2$ be a global characteristics associated a viscosity subsolution $v$ to (\ref{mech-cell}). Then  for all $t_1<t_2$, 
$$
M|\xi(t_2)-\xi(t_1)|\geq u(\xi(t_2))-u(\xi(t_1))\geq {\delta_1(l_{t_1,t_2}-1)}.
$$
Here  $\delta_1$ is from Lemma \ref{lem:lower-bound} when $\epsilon=1$, $l_{t_1,t_2}=\int_{t_2}^{t_1}|\dot \xi(s)|\,ds$ and $M=\sqrt{2(\overline H(p)-\min_{\Rset^n}V)}$.
\end{cor}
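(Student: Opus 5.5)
The plan is to derive both inequalities directly from the calibration identity and from Lemma~\ref{lem:lower-bound} with $\epsilon=1$, treating the two sides separately.

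\emph{Upper bound.} Since $\xi$ is a global characteristic, $u(\xi(t_2))-u(\xi(t_1))$ equals the action of $\xi$ on $[t_1,t_2]$. By Lemma~\ref{h-bound}(1) this quantity is therefore equal to $h_p(\xi(t_1),\xi(t_2))$, because the action is always at least the difference of $u$ and here it is attained. To bound $h_p(x,y)$ from above, I will use the straight segment from $x$ to $y$, traversed at constant speed $s$. Its cost is
\[
\int_0^{|y-x|/s}\Bigl(\tfrac12 s^2-V+\overline H(p)\Bigr)\,dt\le \frac{|y-x|}{s}\Bigl(\tfrac12 s^2+\overline H(p)-\min V\Bigr).
\]
Choosing $s=M=\sqrt{2(\overline H(p)-\min V)}$ gives exactly $M|y-x|$. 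Alternatively, since $v$ is Lipschitz with $\frac12|p+Dv|^2\le \overline H(p)-V\le \tfrac12M^2$, we have $|Du|\le M$ almost everywhere, and the bound follows directly.

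\emph{Lower bound.} Set $l=l_{t_1,t_2}$. If $l<1$, the right-hand side $\delta_1(l-1)$ is negative, while $u(\xi(t_2))-u(\xi(t_1))\ge 0$ because it equals a nonnegative action. Note that $L+\overline H(p)=\tfrac12|\dot\xi|^2-V+\overline H(p)\ge 0$, since $\overline H\ge 0\ge V$.

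If $l\ge1$, let $k=\lfloor l\rfloor$ and split $[t_1,t_2]$ at times $t_1=s_0<s_1<\dots<s_k\le t_2$ so that each piece $[s_{j-1},s_j]$ has length exactly $1$. Lemma~\ref{lem:lower-bound} gives an increment of $u$ of at least $\delta_1$ on each piece. The leftover piece $[s_k,t_2]$ contributes a nonnegative increment. Summing the telescoping increments yields
\[
u(\xi(t_2))-u(\xi(t_1))\ge k\delta_1\ge \delta_1(l-1).
\]

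There is no real obstacle here. The only points needing care are the nonnegativity of the integrand, which justifies discarding the leftover piece, and making sure the constant $\delta_1$ is uniform, which Lemma~\ref{lem:lower-bound} provides.
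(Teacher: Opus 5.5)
Your proposal is correct and follows the paper's proof: the lower bound comes from cutting the curve into $\lfloor l_{t_1,t_2}\rfloor$ pieces of unit length and applying Lemma~\ref{lem:lower-bound} with $\epsilon=1$ to each, with the nonnegative leftover piece discarded. The upper bound, which the paper simply calls obvious, you justify correctly, either via $|Du|=|p+Dv|\le M$ a.e.\ or via the straight-segment competitor at speed $M$.
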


Proof: The first $\geq$ is obvious. We only need to establish the second $\geq$.  For simplicity of notations, let $t_1=0$ and $t_2=T$.   If $l_{t_1,t_2}\leq 1$, the conclusion is obviously true.  So we assume that $l_{t_1,t_2}>1$.  

Choose $0=s_0<s_1<s_2,...<s_m\leq T$ such that  $\int_{s_i}^{s_{i+1}}|\dot \xi(s)|\,ds=1$ for $i=0,1,2,..m$ and $m=\lfloor l_{t_1,t_2}\rfloor$ (the integer part). 

Then owing to the Lemma \ref{lem:lower-bound}, for $u(x)=p\cdot x+v(x)$, we have that
$$
u(\xi(s_{i+1}))-u(\xi(s_{i}))\geq \delta_1. 
$$
Thus, 
$$
u(\xi(T))-u(\xi(0))\geq m\delta_1.
$$
Hence our conclusion holds. 
\qed

\subsection{Aubry Set}

Some global characteristics play more important roles than others. For $p\in  \Rset^n$,  $\xi:\Rset\to \Rset^n$ is called a {\it universal global characteristic associated with $p$} if it is a global characteristics for every viscosity subsolution to  (\ref{mech-cell}). Let $\mathcal{U}_p$ be the collection of all universal characteristics. Then
\[
\widetilde{\mathcal A}_p
:= \bigcup_{\gamma \in \mathcal{U}_p}\ \bigl\{(\dot\gamma(t),\gamma(t)) : t\in\mathbb{R}\bigr\}\subset \Rset^n\times \Rset^n
\]
is  the Aubry set, which is closed and Euler-Lagrangian flow invariant. We refer the reader  to (\cite{F,FS2004}) for properties of Aubry sets discussed in this section. Throughout this paper, we lift Aubry and Mather sets to $\Rset^n$ for convenience.  Clearly,  the following graph property holds: for every  viscosity subsolution to  (\ref{mech-cell}),
\be{}\label{graphsupport}
\widetilde {\mathcal{A}}_p \subset \{(q,x)\in \Rset ^n\times \Rset^n \,:\,
\text{$Dv(x)$ exists and $p+Dv(x)=D_{q}L(q,x)$}\}.
\ee
Write $\mathcal {A}_p$ as the projection of $\tilde {\mathcal{A}}_p$ on $\Rset^n$.

Below is a well known property of the projected Aubry set.

\medskip

{\bf (P3)}  Given $p\in \Rset^n$, if $v$ is a  viscosity subsolution to (\ref{cell}), then $v$ is differentiable on $\mathcal{A}_p$ and
\be\label{eq:aubrydiff}
H(p+Dv,x)=\overline H(p) \quad \text{for $x\in \mathcal{A}_p$}. 
\ee

\medskip

 The following is the equivalent characterizations of the Aubry set (see \cite{FS2004} for instance).  Let 
$$
G_{t,p}(x,x)=\inf_{\substack{\xi\in AC([0,t],\Rset^n),\\ \xi(0)=x,\  \xi(t)\in x+\Zset^n}}\int_{0}^{t}\left(L(\dot \xi(s),\xi(s))-p\cdot \dot \xi(s)+\overline H(p)\right)\,ds.
$$
Then
$$
\mathcal {A}_p=\{x\in \Rset^n|\  \liminf_{t\to +\infty}G_{t,p}(x,x)=0\},
$$
Thanks to Lemma \ref{calib}, for all $x\in \Rset$ and a viscosity subsolution $v$ to (\ref{cell}), 
\be\label{eq:G-positive}
G_{t,p}(x,x)\geq v(x)-v(x)=0 \quad \text{for all $t\geq 0$}.
\ee

The following result is well known to experts. Since its proof is quite simple,  for reader's convenience, we present it here.

\medskip

\begin{lem}\label{lem:equal-aubry} Given two distinct points $p_0,p_1\in \Rset^n$,   suppose that 
$$
\overline H(p_\lambda)\equiv \overline H(p_0) \quad \text{for all $\lambda\in [0,1]$}
$$
for $p_\lambda=\lambda p_1+(1-\lambda)p_0$. Then for all $\lambda\in (0,1)$,
$$
\widetilde {\mathcal{A}}_{p_\lambda}\subseteq \widetilde {\mathcal{A}}_{p_0}\cap \widetilde {\mathcal{A}}_{p_1}.
$$
Consequently,  for all $\lambda\in (0,1)$
$$
 \widetilde{\mathcal{A}}_{p_\lambda}= \widetilde{\mathcal{A}}_{p_{1\over 2}}
$$
\end{lem}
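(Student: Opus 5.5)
The argument is an averaging argument with subsolutions. Set $c=\overline H(p_0)$ and fix $\lambda\in(0,1)$. Choose $\mathbb{Z}^n$-periodic viscosity subsolutions $v_0$ of the cell problem at $p_0$ and $v_1$ at $p_1$, both at level $c$. Put $w_\lambda=\lambda v_1+(1-\lambda)v_0$. Since $p_\lambda+Dw_\lambda=\lambda(p_1+Dv_1)+(1-\lambda)(p_0+Dv_0)$ almost everywhere and $H$ is convex in $p$, we get
\[
H(p_\lambda+Dw_\lambda,x)\le \lambda H(p_1+Dv_1,x)+(1-\lambda)H(p_0+Dv_0,x)\le c=\overline H(p_\lambda)
\]
almost everywhere. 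So $w_\lambda$ is a subsolution at $p_\lambda$.

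Now take $\gamma\in\mathcal U_{p_\lambda}$. Because $\gamma$ is a universal characteristic, it is calibrated by $w_\lambda$, which gives
\[
\int_{t_1}^{t_2}\bigl(L(\dot\gamma,\gamma)+c\bigr)\,ds=\lambda\bigl[u_1(\gamma(t_2))-u_1(\gamma(t_1))\bigr]+(1-\lambda)\bigl[u_0(\gamma(t_2))-u_0(\gamma(t_1))\bigr],
\]
where $u_i=p_i\cdot x+v_i$. (I use the convention, as in Lemma~\ref{calib}, that the integrand is $L+\overline H$ with the linear term absorbed into $u$.) Lemma~\ref{calib}, applied to $u_i$, shows that the left side is at least each bracket. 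A convex combination of the two brackets can equal something that dominates both only if both brackets equal the left side. Hence $\gamma$ is calibrated by $u_0$ and by $u_1$.

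The choice of $v_0$ and $v_1$ was arbitrary, so $\gamma$ is a global characteristic for every subsolution at $p_0$ and for every subsolution at $p_1$. Thus $\gamma\in\mathcal U_{p_0}\cap\mathcal U_{p_1}$, and taking the lifted orbits $\{(\dot\gamma(t),\gamma(t))\}$ gives the inclusion $\widetilde{\mathcal A}_{p_\lambda}\subseteq\widetilde{\mathcal A}_{p_0}\cap\widetilde{\mathcal A}_{p_1}$.

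For the second statement, let $\lambda,\mu\in(0,1)$. The segment $[p_\lambda,p_{1/2}]$ lies in the level set, so the inclusion just proved, applied to that segment, gives $\widetilde{\mathcal A}_{p_\mu}\subseteq\widetilde{\mathcal A}_{p_\lambda}$ for every $p_\mu$ strictly between $p_\lambda$ and $p_{1/2}$. To include the endpoint $p_{1/2}$ itself, I apply the inclusion to a slightly longer segment inside $(p_0,p_1)$ that has $p_\lambda$ as an endpoint and $p_{1/2}$ as an interior point. This gives $\widetilde{\mathcal A}_{p_{1/2}}\subseteq\widetilde{\mathcal A}_{p_\lambda}$. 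Symmetrically, using a segment with endpoint $p_{1/2}$ and $p_\lambda$ in its interior, I get $\widetilde{\mathcal A}_{p_\lambda}\subseteq\widetilde{\mathcal A}_{p_{1/2}}$. Together these give equality.

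The only delicate point is the first step: convexity must be used at almost every $x$ for Lipschitz subsolutions, and the calibration identity for $w_\lambda$ must hold for every $t_1<t_2$ and split termwise. Both follow from the linearity of $w_\lambda$ in $(v_0,v_1)$ together with Lemma~\ref{calib}.
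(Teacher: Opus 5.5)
Your proof is correct, but it follows a different route from the paper's.

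\textbf{The paper's route.} It first works on the projected set, using the characterization $\mathcal{A}_p=\{x:\liminf_{t\to\infty}G_{t,p}(x,x)=0\}$. Because $\overline H$ is constant on the segment, the action of each fixed loop is affine in $p$. Hence $G_{t,p_\lambda}(x,x)$ dominates the corresponding convex combination of $G_{t,p_0}(x,x)$ and $G_{t,p_1}(x,x)$, and nonnegativity of $G$ gives $\mathcal{A}_{p_\lambda}\subset\mathcal{A}_{p_0}\cap\mathcal{A}_{p_1}$. Only then does the paper bring in the averaged subsolution $w_\lambda$. It combines (P3), strict convexity of $H$ (to force $p_\lambda+Dw_\lambda=p_0+Dv_0=p_1+Dv_1$ on $\mathcal{A}_{p_\lambda}$) and the graph property to lift the inclusion to $\widetilde{\mathcal A}$.

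\textbf{Your route.} You stay at the level of curves and use the paper's definition of $\widetilde{\mathcal A}_p$ through universal characteristics directly. The averaged subsolution $w_\lambda$ calibrates each $\gamma\in\mathcal U_{p_\lambda}$, and the two one-sided inequalities from Lemma~\ref{calib} squeeze both brackets to equality.

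\textbf{What each buys.} Your argument gives the stronger curve-level statement $\mathcal U_{p_\lambda}\subseteq\mathcal U_{p_0}\cap\mathcal U_{p_1}$. It needs only convexity, not strict convexity, of $H$ in $p$. It also bypasses the $G_{t,p}$ characterization, (P3) and the graph property altogether. The paper's argument, in exchange, is pointwise and works directly with the action (Peierls-type) description of the Aubry set, without reference to universal characteristics. Along the way it records the identity $p_0+Dv_0=p_1+Dv_1$ on $\mathcal A_{p_\lambda}$, which is used later in the ``if'' part; you would recover this from the graph property once the lifted sets coincide.

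\textbf{Second assertion.} Your derivation is also correct: you apply the inclusion to subsegments in which $p_\lambda$ and $p_{1/2}$ alternate between endpoint and interior point, and the case $\lambda=1/2$ is trivial. The paper leaves this step implicit.
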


\medskip

Proof: By the definition of $G_{t,p}(x,x)$, we have that
$$
G_{t,p_{\lambda}}(x,x)\geq \lambda G_{t,p_0}(x,x)+(1-\lambda)G_{t,p_1}(x,x).
$$
Accordingly, by (\ref{eq:G-positive}), 
$$
\liminf_{t\to \infty}G_{t,p_{\lambda}}(x,x)=0 \quad \Rightarrow \quad \liminf_{t\to \infty}G_{t,p_0}(x,x)=\liminf_{t\to \infty}G_{t,p_1}(x,x)=0.
$$
This implies that 
\be\label{eq:projectcontain}
 {\mathcal{A}}_{p_\lambda}\subset {\mathcal{A}}_{p_0}\cap {\mathcal{A}}_{p_1}.
\ee
Now let $v_0$ and $v_1$ and viscosity subsolutions to (\ref{mech-cell}) corresponding to $p_0$ and $p_1$ respectively. Then $v=\lambda v_1+(1-\lambda)v_0$  is a viscosity subsoltuion to (\ref{cell}) corresponding to $p_\lambda$. Thanks to (\ref{eq:aubrydiff}),  $v_0$, $v_1$ and $v_\lambda$ are all differentiable on ${\mathcal{A}}_{p_\lambda}$ and 
$$
H(p_\lambda+Dv_\lambda,x)=H(p_0+Dv_0,x)=H(p_1+Dv_1,x)= \overline H(p_0).
$$
Therefore, strong convexity of $H$ leads to 
$$
p_\lambda+Dv_\lambda(x)=p_0+Dv_0(x)=p_1+Dv_1(x) \quad \text{for $x\in {\mathcal{A}}_{p_\lambda}$}.
$$
Thus, combining (\ref{graphsupport}) and (\ref{eq:projectcontain}), our conclusion $\widetilde {\mathcal{A}}_{p_\lambda}\subseteq \widetilde {\mathcal{A}}_{p_0}\cap \widetilde {\mathcal{A}}_{p_1}$ holds. \qed

Below is a fact that will be used to simplify our proof of Theorem~\ref{main}.
\begin{rmk}\label{rmk:upperaubry}
By the results of \cite{Ber2010} and \cite{FFR2009}, the Aubry set depends upper semicontinuously on the Hamiltonian (equivalently, on the Lagrangian) when $n=2$ and the Hamiltonian is $C^\infty$. 
We note, however, that this property does not hold in general, as shown by a counterexample of Mather in \cite{Mather2004} when the Hamiltonian has lower regularity.
\end{rmk}

\subsection{Mather set}

Denote by $\mathcal{W} $ the set of all Borel probability
measures on $\Rset^n \times \Bbb T^n$ which are Euler-Lagrangian
flow invariant. For fixed $p\in \Rset^n$, $\mu\in \mathcal{W}$ is called a {\it ``Mather measure"} if 
$$
\int_{\Rset^n\times \Bbb T^n} (L(q,x)-p\cdot q)\,d\mu=\min_{\nu\in \mathcal{W}}\int_{\Rset^n\times \Bbb T^n}(L(q,x)-p\cdot q)\,d\nu.
$$
Here $L(q,x)=\sup_{p\in \Rset^n}\{p\cdot q-H(p,x)\}$ is the Lagrangian associated with $H$. Denote by $\mathcal{W}_p$ the set of all such Mather measures. The value of the minimum action on the right hand side turns out to be $-\overline H(p)$, i.e., 
\be\label{negative-hbar}
\min_{\nu\in \mathcal{W}}\int_{\Rset^n\times \Bbb T^n}(L(q,x)-p\cdot q)\,d\nu=-\overline H(p).
\ee

The {\it Mather set}  is defined to be the closure of the union of the support of all Mather measures, i.e., 
$$
\widetilde{\mathcal{M}}_p=\overline {\bigcup_{\mu \in \mathcal{W}_p}\mathrm{supp}(\mu)}.
$$
Hereafter, we lift $\widetilde{\mathcal{M}}_p$ to $\Rset^n$.  It is known that 
$$
\widetilde{\mathcal{M}}_p\subseteq \widetilde{\mathcal{A}}_p,
$$
which implies that the Aubry set is not empty.  The projected Mather set $\mathcal {M}_{p}$ is the projection of $\widetilde{\mathcal {M}}_p$ to $\Rset^n$ (the spacial variable). 

\medskip

 $\mathcal {M}_{p}$ is known to be the set of uniqueness for viscosity solution to the cell problem (\ref{cell}), i.e.,  for two solutions $v_1$ and $v_2$
\be\label{eq:uniquenessset} 
v_1=v_2 \quad \text{on $\mathcal {M}_{p}$}\quad \Rightarrow v_1=v_2 \quad \text{on $\Rset^n$}.
\ee

Hereafter,  a curve $\xi: \Rset\to \Rset^n$ is called an orbit on $\mathcal {M}_p$ (or\  ${\mathcal{A}}_p$) if 
$$
\{(\dot \xi(t),\ \xi(t))\}_{t\in \Rset}\subseteq \widetilde{\mathcal {M}}_p\ (\text{or $\widetilde{\mathcal{A}}_p$}).
$$ 
In particular, due to the graph property (\ref{graphsupport}),  two orbits on $\mathcal{A}_p$ are either disjoint or coincide up to a time translation.

In addition,  a curve $\xi: \Rset\to \Rset^n$ is called {\it  periodic} (modulo $\Zset^n$)  if there exists $v\in \Zset^n$ and $T>0$ (a period of $\xi$) such that $\xi(t+T)-\xi (t)=v$ for all $t\in \Rset$. Note that such a  $\xi$ is periodic when it is projected to the flat torus $\Bbb T^n=\Rset^n\backslash \Zset^n$.

\medskip

\subsection{Sketch of the Proof: Comparison between $F_c$ and $\partial F_0$}\label{section:sketch-of-proof}

Hereafter we focus on $n=2$. Due to the restriction of two dimensional topology on the plane and the crucial property {\bf (P2)},  the classical Aubry-Mather theory has provided  detailed information about the structures of  absolutely minimizing curves. First, we recall a crucial property that is the building block of  the classical Aubry-Mather theory.
\medskip

\noindent\textbf{(P4)} For \( c > 0 \) and \( p \in F_c \), if the unit outward normal vector \( q_p \in \mathbb{R} \mathbb{Z}^2 \), then every orbit in \( \mathcal{M}_p \) is periodic (modulo \( \mathbb{Z}^n \)) and has a first homology class \( (m, n) \) in the same direction as \( q_p \). In addition, $(m,n)$ is a primitive integer vector; that is, either $(m,n)=(\pm1,0)$, $(0,\pm1)$, or $\gcd(|m|,|n|)=1$.  See \cite{B1} for instance.

\medskip

As it was mentioned in the introduction, for $c>0$, $F_c$ must contain line segments unless $V$ is constant \cite{B2}.  Below we provide a sketch of  the proof of this fact, as its reasoning is particularly  relevant.

\medskip

{\bf Step I:}  For $p\in F_c$, if the unit outward normal vector $q_p\in \Rset \Zset^2$ and $p$ does not lie on a line segment on $F_c$, then  the entire plane is filled by periodic orbits on $\mathcal{M}_p$:
\be\label{fill}
\mathcal {M}_p=\mathbb {R} ^2.
\ee
Equivalently, there is no gap between periodic orbits.

Here, we present a simplified proof based on the upper semicontinuity of the Aubry set in two dimensions (see Remark~\ref{rmk:upperaubry}).

 \begin{proof}
Suppose there exists a gap in \( \mathcal{M}_p \) bounded by two periodic (modulo \( \mathbb{Z}^2 \)) orbits $\xi_1$ and $\xi_2$. Assume that  \( p \) is not a linear point.

\begin{itemize}
    \item \textbf{(Approximation)} Choose two sequences of points $\{p_{m,1}\}_{m\geq 1}$ and $\{p_{m,2}\}_{m\geq 1}$ approaching $p$ from two directions (clockwise and counterclockwise along $F_c$). For $i=1,2$, let $\gamma_{m,i}$ be an orbit on $\mathcal{A}_{p_{m,i}}$. Owing to \cite{B1,B2}, the orbits $\gamma_{m,i}$ converge to limiting orbits $\gamma_1$ and $\gamma_2$, which approach two periodic orbits in distinct patterns.  See Figure \ref{f1} below. 

    \item \textbf{(Upper semicontinuity)} By the upper semicontinuity of the Aubry set, both $\gamma_1$ and $\gamma_2$ lie in the Aubry set $\mathcal{A}_p$. Hence, they must be disjoint, which contradicts the topology of two dimensions. The original proof of this step in \cite{B2} relies on constructing viscosity solutions to the cell problem~\eqref{mech-cell}, leading to a contradiction showing that $p$ is a nonlinear point. The existence of gaps essentially ensures the admissibility condition introduced in \cite{Lions}. For a more detailed explanation from the PDE perspective, see \cite{Yu}.
\end{itemize}

\begin{center}
\includegraphics[scale=0.5]{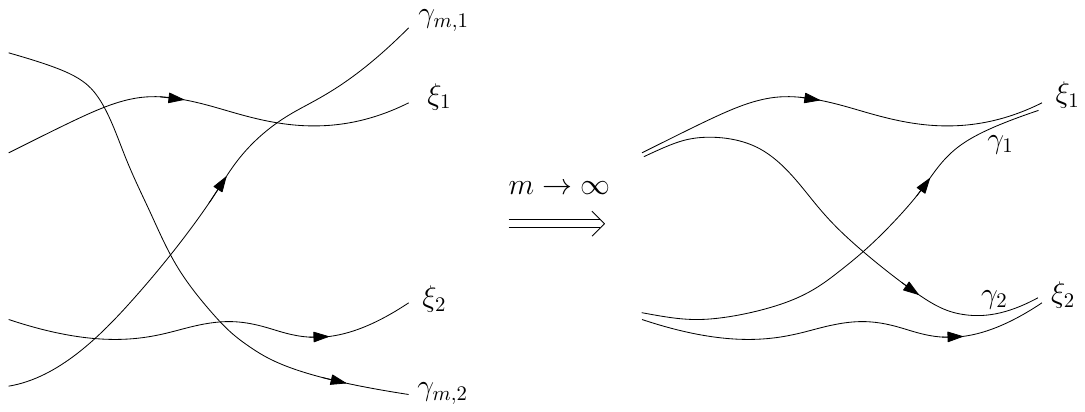}
\captionof{figure}{}
 \label{f1}
\end{center}

\medskip

{\bf Step II:} Accordingly, to establish the existence of line segment at $p\in F_c$ with $n_p\in \Rset\Zset^2$, it suffices to show that $\mathcal {M}_p\not=\Rset^2$, i.e., the existence of gaps between periodic orbits (Modulo $\Zset^2$). Nevertheless, it is difficult (if ever possible) to verify this for any specific $p$. The existence of at least two line segments (oppsite to each other) is proved by contradiction.  Suppose that $F_c$ does not contain line segment, then $\mathcal {M}_p=\Rset^2$ for  all $p\in F_c$ with $n_p\in \Rset\Zset^2$. This implies that every geodesics associated with the $\Zset^2$-periodic Riemannina metric on $\Rset^2$
$$
g(x)={2(c-V(x))}\left((dx_1)^2+(dx_2)^2\right)
$$ 
is a minimal geodesics. According to a classical result of Hopf \cite{H}, the metric must be flat. Consequently,  $V\equiv 0$, which contradicts our assumption.  Apparently,   this non-constructive approach guarantees only the existence of line segments in $F_c$ without providing any precise information about their numbers or locations.
\medskip

{\bf Step III:} Given $p\in F_c$, if $n_p\notin \Rset\Zset^2$, then it can be shown that any two orbits on $\mathcal{M}_p$ will approach each other as time $t\to \infty$. Hence $\mathcal {M}_p$ is connected. Therefore,  if there is a line segment on $\partial F_c$ with a normal vector $\notin \Rset\Zset^2$, Lemma \ref{lem:equal-aubry} and (\ref{eq:uniquenessset}) will lead to two distinct points $p, p'$ in the interior of the line segment such that 
$$
p\cdot x+v_p(x)=p'\cdot x+v_{p'}(x)+C
$$
for a fixed constant $C$ and all $x\in \Rset^2$, which is absurd.

\end{proof}

\medskip

\nit {\bf $\bullet$ Sketch of the Proof of Theorem \ref{main}.} 

\medskip

{\bf 1:} For \( p \in \partial F_0 \), the projected Mather set \( \mathcal{M}_p \) may consist only of points in \( \mathbb{Z}^2 \). Therefore, we work instead with orbits in the projected Aubry set \( \mathcal{A}_p \). Such orbits may approach $\Zset^2$ at one or two ends. To address that, our strategy is to use extension and unit-speed reparametrization to appropriately connect orbits, forming globally defined orbits that are unbounded in both time directions \( t \to \pm \infty \). These orbits are referred to as \emph{ECCUS}; see Definition~\ref{def:cus}.

{\bf One essential difference:} Unlike the case \(c > 0\), due to the presence of critical points, two distinct ECCUS on \(\mathcal{A}_p\) for \(p \in \partial F_0\) may intersect. Furthermore, two distinct ECCUS may intersect at more than one point.

\medskip

{\bf 2:} A suitable version of previous {\bf Step~I} can be established using  periodic (modulo \( \mathbb{Z}^2 \)) ECCUS orbits in \( \mathcal{A}_p \). More precisely, if there exists a gap between two such orbits, we can show that \( p \) is a linear point by applying an argument similar to the simplified proof given after {\bf Step~I}, which relies on the upper semicontinuity of \( \mathcal{A}_p \) in two dimensions. We would like to emphasize that upper semicontinuity is a useful tool for simplifying the proof, but it is not essential.

\medskip

\textbf{3 (Key Step):} This step contains the main difficulty of the proof, relying on a crucial result(Theorem~\ref{theo:no-flat-fill}), which characterizes the limiting behavior of action-minimizing curves near critical points. 

Suppose that $\partial F_0$ is differentiable at a point $p$ with the unit outer normal vector $q_p\in \mathbb{R}\mathbb{Z}^2$.  If there does not exist a gap bounded by two  periodic (modulo \( \mathbb{Z}^2 \)) ECCUS orbits in \( \mathcal{A}_p \), there must exist a sequence of  periodic (modulo $\mathbb{Z}^2$) orbits in the projected Mather set $\mathcal{M}_p$ that approaches the origin $(0,0)$. Theorem~\ref{theo:no-flat-fill} then implies that
$$
\gamma_{b0}(t), \quad \gamma_{b0}(-t), \quad \gamma_{b1}(t), \quad \gamma_{b1}(-t)
$$
must be homoclinic orbits contained in $\mathcal {A}_p\cup \mathcal{A}_{-p}$ (Corollary \ref{cor:uniqueaubry}).  Here for $i=1,2$, $\gamma_{bi}:\Rset\to \Bbb T^2$ is the unique stable orbit (if exists) that is tangent to the eigenvector associated with the larger eigenvalue $b^2$, i.e.,
\be\label{homo-aubry}
\begin{cases}
\ddot \gamma_{bi}(t)=-DV(\gamma_{bi}(t))\\[3mm]
\lim_{t\to \infty}\gamma_{bi}(t)=(0,0)\\[3mm]
\lim_{t\to \infty}{\gamma_{bi}(t)\over |\gamma_{bi}(t)|}=(-1)^iv_b.
\end{cases}
\ee
Recall that $|v_b|=1$ and $D^2V((0,0))v_b=b^2v_b$. The existence and uniqueness of such a curve \( \gamma_{bi} \)  are well known to experts.  For reader's convenience, we present the proof in Lemma~\ref{lem:vunique}.

\medskip

\subsection{Homoclinic orbits}

\begin{defin} For $p\in \partial F_0$, a curve $\gamma:\Rset\to \Rset^2$ is called a homoclinic orbit associated with $p$ if 
\be\label{two-ends}
\lim_{t\to +\infty} \gamma(t)=(m,n) \quad \mathrm{and} \quad \lim_{t\to -\infty} \gamma(t)=(0,0)
\ee
for some $(m,n)\in \Zset^2\backslash \{0\}$ and
$$
\int_{-\infty}^{\infty}{1\over 2}|\dot \gamma|^2-V(\gamma(t))\,dt=p\cdot (m,n).
$$
\end{defin}
It is obvious that  $(m,n)$ represents the first homology class of $\gamma$ when projected onto $\mathbb{T}^2$.  
\begin{defin}
Denote by
\[
G_p = \{\text{first homology classes of homoclinic orbits associated with } p\}.
\]
\end{defin}

Clearly, each homoclinic curve is an orbit on the projected Aubry set $\mathcal{A}_p$.  See \cite{Cheng} for more detailed information related to homoclinic curves.  For reader's convenience, we present some relevant results and proofs here.

\begin{lem}\label{lem:G-property} Assume that $p\in \partial F_0$. 

(1) If $(m,n)\in G_p$, then it is primitive, i.e., $(m,n)=(\pm 1,0)$ or $(m,n)=(0, \pm 1)$ or $\mathrm{gcd}(|m|,|n|)=1$.
\medskip

(2) If $(m,n)\in G_p$, then its normalization
$$
{(m,n)\over \sqrt{m^2+n^2}}\in n_p.
$$

(3) $G_p$ contains at most three elements. Also, if $G_p$ has two or three elements, then 
$$
G_p=\{v_0, v_1\}  \quad \mathrm{or} \quad G_p=\{v_1, v_2, v_1+v_2\} 
$$
for a positively oriented unimodular pair $v_0, v_1\in \Zset^2$, i.e.,
\be\label{eq:unimodular}
\det A=1.
\ee
Here $A=(v_1^{\intercal}\,\, v_0^{\intercal})$ is the $2\times 2$ matrix whose columns are  $v_0^{\intercal}$ and $v_1^{\intercal}$.

\end{lem}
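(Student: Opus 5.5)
The plan has three parts, one for each item. Throughout, fix a $\Zset^2$-periodic viscosity subsolution $v$ of (\ref{mech-cell}) and set $u=p\cdot x+v$; write $V=0$ at $\Zset^2$ and recall $\overline H(p)=0$. The basic tool is Lemma~\ref{calib} applied to the shifted Lagrangian: every curve from $x$ to $y$ has action at least $u(y)-u(x)$.

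\textbf{Item (2).} Let $(m,n)\in G_p$ with homoclinic $\gamma$, and take $p'\in F_0$ with subsolution $v'$. Apply Lemma~\ref{calib} to $v'$ along $\gamma$ on $[-T,T]$ and let $T\to\infty$. Since $v'$ is periodic, this gives
\[
p\cdot(m,n)=\int_{-\infty}^\infty\Bigl(\tfrac12|\dot\gamma|^2-V(\gamma)\Bigr)\,dt\ \ge\ p'\cdot(m,n).
\]
Hence $(m,n)\cdot(p'-p)\le 0$ for all $p'\in F_0$, which is exactly the statement that the normalized vector lies in $n_p$.

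\textbf{Item (1).} Suppose $(m,n)=k(m',n')$ with $k\ge2$ and $(m',n')$ primitive. I will use a crossing argument on the torus. Project $\gamma$ to $\Bbb T^2$; it is a closed loop through $O$ in the class $k(m',n')$. I then consider the translates $\gamma+j(m',n')$, or alternatively translates $\gamma+w$ with $w\in\Zset^2$ chosen so that $\gamma$ and its translate cross transversally at some interior point away from $\Zset^2$. Such a crossing is forced by planar topology, since a curve from $0$ to $k(m',n')$ and its shift by a suitable lattice vector cannot be separated. At the crossing I perform the usual cut-and-paste: swap tails to obtain two new curves, one running from $0$ to a lattice point $w_1$ and the other from $w$ to $w_2$, with total action equal to the sum of the original actions. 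Each new curve has corners, so its action strictly exceeds its lower bound $p\cdot(\text{endpoint difference})$. Summing these strict bounds contradicts the equality of the original total action with $2\,p\cdot(m,n)$, because the endpoint differences add up to the same total. Alternatively, both glued curves lie in $\mathcal A_p$ and must then be $C^1$, which they are not. This is the step I expect to be the main obstacle: the transversal crossing point must be placed away from $\Zset^2$, and this is exactly where the two-dimensional no-crossing property (P2) fails at the critical point.

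\textbf{Item (3).} By the graph property (\ref{graphsupport}), distinct homoclinic orbits in $\mathcal A_p$ do not intersect away from $\Zset^2$. The same holds for their integer translates, since $\mathcal A_p$ is $\Zset^2$-invariant. Projected to $\Bbb T^2$, the orbits are therefore simple closed loops based at $O$ that are pairwise disjoint except at $O$, and they are primitive by item (1). Two such loops in classes $v,w$ have algebraic intersection number $\det(v,w)$. Because they meet only at the base point, where each loop contributes at most one local crossing, this forces $|\det(v,w)|\le1$. A set of primitive classes that are pairwise unimodular and pairwise non-parallel contains at most three elements, and in that case it has the form $\{v_1,v_2,v_1\pm v_2\}$. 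Two more facts cut this down to the stated lists. First, $v$ and $-v$ cannot both occur, since by item (2) both normalizations would lie in $n_p$, contradicting that $F_0$ is two-dimensional. Second, item (2) places all the normalized classes in the single normal cone $n_p$, which is an arc of length less than $\pi$; this selects $v_1+v_2$ rather than $v_1-v_2$. Ordering $v_0,v_1$ counterclockwise then gives $\det A=1$.
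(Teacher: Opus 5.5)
Your proposal is correct. Item (2) is exactly the paper's argument. Item (1) rests on the same fact the paper uses: for $w\neq 0$, $\gamma(\Rset)$ and $\gamma(\Rset)+w$ are distinct orbits of $\mathcal A_p$ and hence disjoint, so the projected loop is simple and its class is primitive.

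The obstacle you flag in (1) is not real. An orbit of $\ddot x=-DV(x)$ cannot reach an equilibrium in finite time, so $\gamma(\Rset)\cap\Zset^2=\emptyset$. Moreover, for $k\ge 2$ the lattice points $jk(m',n')$ on the periodic extension $\tilde\gamma$ are disjoint from those on $\tilde\gamma+(m',n')$. So the crossing forced by the ordering argument is automatically off the lattice. (That argument: if $\tilde\gamma+(m',n')$ lay on one side of $\tilde\gamma$, then $k$ iterations would place $\tilde\gamma=\tilde\gamma+k(m',n')$ strictly on one side of itself.) The graph property then rules the crossing out directly, and the cut-and-paste step is unnecessary.

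For (3) your route differs from the paper's.

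\textbf{The paper's route.} It builds a curved lattice net from translates of two homoclinic orbits and observes that no cell contains an integer point. By planar topology, a third class lying between $v_0$ and $v_1$ must then equal $v_0+v_1$. Unimodularity is quoted from the fact that $|\det|$ is the minimal geometric intersection number of simple closed curves on $\mathbb{T}^2$.

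\textbf{Your route.} You apply the intersection bound to every pair of loops: they meet only at $O$, where the local index lies in $\{-1,0,1\}$. Everything else becomes algebra. Suppose $a,b,c$ are pairwise unimodular and lie in this angular order in an open half-plane (guaranteed since $n_p$ is an arc of length less than $\pi$). Then $b=\lambda a+\mu c$ with $\lambda=\det(b,c)/\det(a,c)$ and $\mu=\det(a,b)/\det(a,c)$. Both coefficients are positive with modulus one, so $b=a+c$. A fourth class would force two of the classes to coincide. This avoids the net construction and is arguably cleaner.

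One correction: without the cone constraint, your classification should read $\{v_1,v_2,\pm(v_1\pm v_2)\}$, not $\{v_1,v_2,v_1\pm v_2\}$. The missed configuration $\{v_1,v_2,-(v_1+v_2)\}$ sums to zero, so the same half-plane argument excludes it.
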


\nit Proof: (1)  follows from the fact that a homoclinic orbit \( \gamma \) associated with \( p \) cannot intersect itself when projected onto the flat torus \( \mathbb{T}^2 \). Equivalently, for any \( \vec{w} \in \mathbb{Z}^2 \setminus \{0\} \), the curves \( \gamma \) and \( \gamma + \vec{w} \) in \( \mathbb{R}^2 \) are either disjoint or coincide up to a time translation.

(2) Note that for any $p'\in \partial F_0$ and any $\Zset^2$-periodic viscosity subsolution $v'$ of the associated problem~\eqref{mech-cell}, we have
\[
\int_{-\infty}^{\infty}\left(\tfrac{1}{2}|\dot \gamma(t)|^2 - V(\gamma(t))\right)\,dt 
\;\geq\; p'\cdot (m,n) + v'(\gamma(\infty)) - v'(\gamma(-\infty)) 
\;=\; p'\cdot (m,n).
\]
Hence, for all $p'\in  \partial F_0$,
\[
(p-p')\cdot (m,n) \;\geq\; 0.
\]

(3) Suppose that for $i=0,1,2$, the vectors $(m_i,n_i)$ are three distinct elements of $G_p$, and let $\gamma_i$ be the associated homoclinic orbits.  
By (2), without loss of generality we may assume that
\[
(m_2,n_2)=\lambda_2 (m_0,n_0)+\lambda_{2}'(m_1,n_1),
\qquad \lambda_2,\lambda_{2}'>0.
\]

Using $\gamma_1$, $\gamma_2$, and their integer translates, one can form a curved lattice net such that no integer point lies in the interior of any cell. See \ref{f2} below. 

By planar topology and the fact that the curves $\gamma_i$ are mutually disjoint, it follows that
\[
(m_2,n_2)=(m_0,n_0)+(m_1,n_1).
\]
Finally, $|\mathrm{det}(A)|$ represents the minimal possible intersection number of two simple closed curves on $\mathbb{T}^2$ with given first homology class $v_0$ and $v_1$ when both of them are primitie integer vectors. See section 1.2.3 in \cite{FM2012} for instance. 
\begin{center}
\includegraphics[scale=0.5]{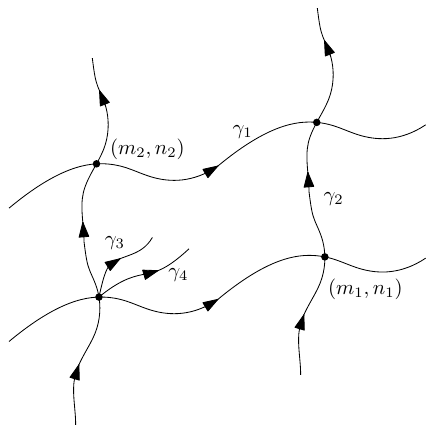}
\captionof{figure}{}
 \label{f2}
\end{center}
 \qed

\begin{rmk} It is possible that $G_p$ contains three elements. For instance, in Example \ref{example:separable} in section 6, for $p=(L_1,L_2)$,
$$
G_p=\{(1,0),\ (0,1), \ (1,1)\}.
$$

\end{rmk}

\noindent
\textbf{$\bullet$ Periodic Extension of Homoclinic Orbits.} Since a homoclinic orbit is bounded in $\mathbb{R}^2$, we need to extend it for our purposes. Suppose $\gamma:\mathbb{R} \to \mathbb{R}^2$ is a homoclinic curve associated with $p$. We define its periodic extension $\tilde{\gamma}$ by the following reparametrization (see Figure \ref{f3}):
\begin{equation} \label{eq:extension}
\tilde{\gamma}(t) =
\begin{cases}
\gamma \left( \arctan\left( \pi \left(t - \tfrac{1}{2} \right) \right) \right) & \text{for } t \in [0,1], \\
\tilde{\gamma}(t - k) + k(m,n) & \text{for } k \in \mathbb{Z},\ t \in [k, k+1].
\end{cases}
\end{equation}
For convenience, we may also consider a unit-speed reparametrization of $\tilde{\gamma}$.

Given a viscosity subsolution $v$ of~\eqref{mech-cell}, let $u = p \cdot x + v$. Then for any $s_1 < s_2$,
\[
u(\tilde{\gamma}(s_2)) - u(\tilde{\gamma}(s_1)) = h(\tilde{\gamma}(s_1), \tilde{\gamma}(s_2)).
\]

In the next section, we extend this construction to obtain globally defined curves with the above property by suitably connecting other orbits contained in $\mathcal{A}_p$.

\begin{center}
\includegraphics[scale=0.5]{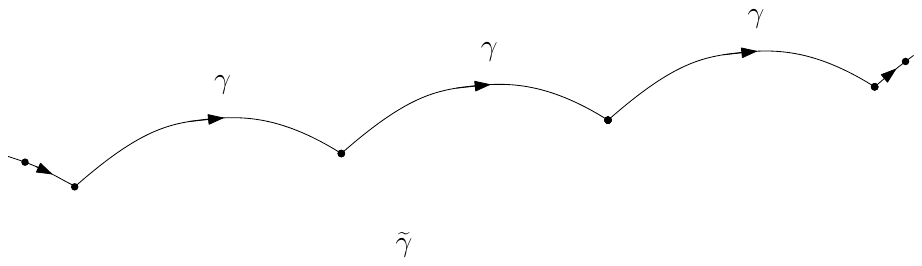}
\captionof{figure}{}
 \label{f3}
\end{center}

\subsection{Extension and Unit-reparameterization of Orbits}

 Due to the presence of critical points, orbits in $\mathcal{A}_p$ may converge to $\mathbb{Z}^2$ at one end or at both ends (as in the case of homoclinic orbits), which implies that these orbits are bounded in at least one direction. For our purposes, we need to connect such orbits in a suitable way in order to construct unbounded ones along both directions ($t\to\pm \infty$).

\begin{defin}\label{def:cus} 
Let $p\in \Rset^n$ and let $v$ be a viscosity subsolution of~\eqref{mech-cell}.  
Define $u = p \cdot x + v$.  
A Lipschitz continuous curve $\xi:\Rset \to \Rset^2$ is called an \emph{extended calibrated curve with unit speed} (ECCUS) associated with $(p,v)$ if the following hold:

\begin{enumerate}
\item For all $s_1 < s_2$, 
\[
u(\xi(s_2)) - u(\xi(s_1)) = h(\xi(s_1),\xi(s_2)).
\]

\item 
\[
|\dot \xi(s)| = 1 \qquad \text{for a.e. } s \in \Rset.
\]
\end{enumerate}
\end{defin}

For convenience, we often omit the dependence of an ECCUS on $v$. If $\xi: \mathbb{R} \to \mathbb{R}^n$ is an ECCUS, it is expected to be the union of unit speed reparametrizations of distinct global characteristics. Two notable examples are:
\begin{enumerate}
    \item the unit-speed reparametrization of an unbounded (along both directions) orbit on $\mathcal{A}_p$, and
    \item the unit-speed reparametrization of the periodic extension of a homoclinic orbit, as defined in~\eqref{eq:extension}.
\end{enumerate}

For reader's convenience, we presented the proof below (Lemma \ref{lem:cusproperty}).  Set
$$
Z_\xi=\{s\in \Rset|\ \xi(s)\in \Zset^2\}.
$$
Apparently,  $|s_1-s_2|\geq 1$ for $s_1\not=s_2\in Z_{\xi}$. We write 
$$
\Rset\backslash Z_\xi=\cup_{j\in J}I_j.
$$
Here $J$ is a finite or countably many set. The collection  $\{I_j\}_{j\in J}$ are disjoint open sets.

\begin{lem}\label{lem:cusproperty} Given $p\in \Rset^n$ and  a viscosity subsolution $v$ of~\eqref{mech-cell},  let $\xi:\Rset\to \Rset^n$ be an ECCUS  associated with ($p$,$v$). Then

\medskip

(1) $\xi:\Rset\to \Rset^2$ is one-to-one and  for all $s_1<s_2<s_3$
\be\label{eq:custriangle}
h(\xi(s_1),\xi(s_3))=h(\xi(s_1),\xi(s_2))+h(\xi(s_2),\xi(s_3)).
\ee

(II) $\xi\in C^{\infty}(\Rset\backslash Z_{\xi})$ is a unit speed reparametrization of a global characteristic $\eta_j$ of $v$ within each $I_j$ for all $j\in J$. In particular, if $I_j$ is finite, that $\eta_j$ is a homoclinic orbit associated with $p$;

(III) For any $s_1,s_2\in \Rset$ and $\delta_1$ from Corollary \ref{cor:lower-gap},
$$
u(\xi(s_2))-u(\xi(s_1))\geq \delta_1(|s_2-s_1|-1).
$$
\end{lem}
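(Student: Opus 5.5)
The plan is to prove (III) first, since it only uses Corollary~\ref{cor:lower-gap} after a compactness step, then (I), then (II).

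\textbf{Part (III).} Fix $s_1<s_2$ and set $T=s_2-s_1$. Since $|\dot\xi|=1$, the length of $\xi|_{[s_1,s_2]}$ equals $T$. Subdivide $[s_1,s_2]$ into consecutive pieces of length $1$, plus one leftover piece of length less than $1$. It then suffices to show, for each unit-length piece $[\sigma,\sigma+1]$, that
\[
u(\xi(\sigma+1))-u(\xi(\sigma))\geq\delta_1 .
\]
Summing these bounds gives $\delta_1\lfloor T\rfloor\geq\delta_1(T-1)$; the leftover piece contributes a nonnegative amount, because $h\geq 0$ (as $V\le 0$). For a unit piece, property 1 of the definition gives
\[
u(\xi(\sigma+1))-u(\xi(\sigma))=h(\xi(\sigma),\xi(\sigma+1)).
\]
So I need a version of Lemma~\ref{lem:lower-bound} for $h$ along $\xi$ itself, rather than along a single characteristic.

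I would rerun the contradiction argument of Lemma~\ref{lem:lower-bound}. Since $h(x,y)\geq\omega(|x-y|)$ by Lemma~\ref{h-bound}(2), smallness of $h$ forces $\xi([\sigma,\sigma+1])$ to stay close to a single lattice point. By Lemma~\ref{h-bound}(2) and additivity of $h$ along $\xi$ (part (I)), $\xi$ visits at most one lattice point in that window. On each subarc avoiding $\mathbb Z^2$, $\xi$ is a reparametrized zero-energy characteristic; this needs part (II), so I will prove (I) and (II) before (III). The exponential-decay estimate $h_m''\geq a^2h_m$ from Lemma~\ref{lem:lower-bound} then bounds the Euclidean length of each subarc by $C\sqrt{h}$-type quantities, which tend to $0$. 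This contradicts length $1$.

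\textbf{Part (I).} Additivity follows from property 1:
\[
h(\xi(s_1),\xi(s_3))=u(\xi(s_3))-u(\xi(s_1)),
\]
which splits as the sum of the two increments, and each increment equals the corresponding $h$. For injectivity, suppose $\xi(s_1)=\xi(s_3)$ with $s_1<s_3$. Then $h(\xi(s_1),\xi(s_3))=u(\xi(s_3))-u(\xi(s_1))=0$. Pick $s_2\in(s_1,s_3)$ with $\xi(s_2)\neq\xi(s_1)$; such an $s_2$ exists because $\xi$ has unit speed. By additivity,
\[
h(\xi(s_1),\xi(s_2))+h(\xi(s_2),\xi(s_1))=0 ,
\]
while both terms are at least $\omega(|\xi(s_2)-\xi(s_1)|)>0$. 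This is a contradiction.

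\textbf{Part (II).} On an interval $I_j$ the curve avoids $\mathbb Z^2$. Locally, by Lemma~\ref{h-bound}(3), $h$ between nearby points is attained by minimizers. Additivity says $\xi$ restricted to $I_j$ realizes $h$ between any of its points. Hence the minimizing curves, concatenated and reparametrized by time, form a single Euler--Lagrange solution $\eta_j$ with energy $\tfrac12|\dot\eta_j|^2+V(\eta_j)=0$. This uses the standard fact that free-time minimizers have zero energy, and the equality case in $\tfrac12|q|^2-V\geq\sqrt{-2V}\,|q|$. Property 1 then says $\eta_j$ is calibrated by $u$, i.e.\ it is a global characteristic of $v$. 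Smoothness of $\xi$ on $I_j$ follows because $|\dot\eta_j|=\sqrt{-2V}>0$ away from $\mathbb Z^2$, so the unit-speed reparametrization is smooth. If $I_j=(\alpha,\beta)$ is bounded, both endpoints are lattice points. These are distinct by (I), so $\eta_j$ runs from one lattice point to another, and its action equals $u(\xi(\beta))-u(\xi(\alpha))=p\cdot(m,n)$ by periodicity of $v$. This is exactly the homoclinic definition, after translating the start point to $O$.

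The main obstacle is the uniform estimate in (III) near a lattice point, where $\xi$ may pass through $\mathbb Z^2$. There I would apply the decay estimate separately on the two sides of the unique lattice point in the window and add the two length bounds.
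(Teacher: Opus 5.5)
Your proposal is correct and follows the paper's route: (I) from additivity and the positivity of $\omega$, (II) by gluing the local minimizers of Lemma~\ref{h-bound}(3) through additivity into one calibrated zero-energy orbit, and (III) by the unit-window argument behind Corollary~\ref{cor:lower-gap}, where your rerun of Lemma~\ref{lem:lower-bound} across a single lattice point (and your check of the homoclinic claim) supplies details the paper only calls immediate. The one step to state explicitly is in (II), namely why the glued minimizers through different intermediate points $\xi(s)$ give the same orbit; the paper gets this from the differentiability of $u$ at the interior point $\xi(t_0)$, which makes the orbit the solution with data $(\xi(t_0),Du(\xi(t_0)))$ regardless of the chosen endpoints, and a no-corner argument gives the same conclusion.
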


Proof:  (I) Given $t_1<t_2$, clearly, there exists $t\in (t_1,t_2)$ such that $\xi(t)\not=\xi(t_1)$. Then
$$
\begin{array}{ll}
u( \xi(t_2))-u(\xi(t_1))&=u( \xi(t_2))-u(\xi(t))+u(\xi(t))-u(\xi(t_1)\\[3mm]
&\geq u(\xi(t))-u(\xi(t_1))=h(\xi(t_1),\xi(t))>0,
\end{array}
$$
i.e., $u$ is strictly increasing along $\xi$. Hence $\xi$ is one to one. (\ref{eq:custriangle}) follows immediately from the definition. 
\medskip

(II) Fix $j\in J$. It suffices to show that for  any  $t_0\in I_j$, there exists $\epsilon_0>0$, such that $\xi:(t_0-\epsilon_0,t_0+\epsilon_0)\to \Rset^n$ is the unit reparametrization of a characteristics of $v$. 

In fact, Choose $\epsilon_0$ such that for  $t\in [t_0-\epsilon_0, t_0+\epsilon_0]\subset I_j$,
$$
|\xi(t)-\xi(t_0)|<r_{\xi(t_0)}
$$
Here $r_{\xi(t_0)}$ is from (3) of Lemma \ref{h-bound}. Choose $a,b\in (t_0-\epsilon_0, t_0+\epsilon_0) $ such that $a<t_0<b$.  there exist two curves $\eta_{-}:[\alpha_,\alpha_0]\to \Rset^2$ and $\eta_{+}:[\alpha_0,\alpha_+]\to \Rset^2$ such that $\eta_-(\alpha_-)=\xi(a)$, $\eta_-(\alpha_0)=\xi(t_0)=\eta_+(\alpha_0)$ and
$\eta_+(\alpha_+)=\xi(b)$,
$$
h(\xi(a),\xi(t_0))=\int_{\alpha_-}^{\alpha_0}\left({1\over 2}|\dot \eta_-|^2-V(\eta_-)+\overline H(p)\right)\,ds,
$$
$$
h(\xi(t_0),\xi(b))=\int_{\alpha_0}^{\alpha_+}\left({1\over 2}|\dot \eta_+|^2-V(\eta_+)+\overline H(p)\right)\,ds.
$$
Hence
$$
u(\xi(b))-u(\xi(a))=\int_{\alpha_-}^{\alpha_+}\left({1\over 2}|\dot \eta|^2-V(\eta)+\overline H(p)\right)\,ds.
$$
Here
$$
\eta(t)=
\begin{cases}
\eta_-(t) \quad \text{for $t\in [\alpha_-,\alpha_0]$}\\[3mm]
\eta_+(t) \quad \text{for $t\in [\alpha_0,\alpha_+]$}.
\end{cases}
$$
Therefore $\eta$ is a characteristics of $v$ and $v$ is differentiable on $\eta((a, b))$ (in particular, $\eta(0)=\xi(t_0)$).  Consequently,  $\eta$ is the unique solution to $\ddot \eta=-DV(\eta)$ subject to $\eta(\alpha_0)=\xi(t_0)$ and $\dot \eta(\alpha_0)=Du(\eta(\alpha_0))$. Note that $\xi(a),\xi(b)\in \eta(\Rset)$. Since $a$ and $b$ are arbitrary, $\xi$ is a reparametrization of $\eta$ and our conclusion holds.

\medskip

(III) follows immediately from (II) and Corollary \ref{cor:lower-gap}.

\qed

The following is an extension of a similar result for global characterstics when $\overline H(p)>0$. See \cite{W-E, G2002} for instance.
\begin{lem}\label{lem:direction} Let  $p\in \partial F_0$ and $\xi$ be an ECCUS  associated with $(p,v)$. Then

1. For any subsequence $s_m\to \infty$ as $m\to \infty$,  if 
$$
\lim_{m\to \infty}{\xi(s_m)\over |\xi(s_m)|}=q,
$$
then $q\in n_p$;

2.  If there are  $s_1<s_2$ such that $\xi(s_2)-\xi(s_1)=w\in \Zset^2$ for $i=1,2$ and, then
$$
{w\over |w|}\in n_p.
$$
\end{lem}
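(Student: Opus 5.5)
The plan is to compare $u=p\cdot x+v$ with $u'=p'\cdot x+v'$ along $\xi$, where $p'\in F_0$ is arbitrary and $v'$ is any $\Zset^2$-periodic viscosity subsolution of \eqref{mech-cell} for $p'$. Since $\overline H(p')=\overline H(p)=0$, Lemma \ref{calib} and the definition of $h$ give $h(x,y)\ge u'(y)-u'(x)$ for all $x,y$. The argument only needs to cover the case $p'\in\partial F_0$; the case $p'\in F_0$ then follows, because $n_p$ is defined through all of $F_0$ and $F_0=\{\overline H\le 0\}$ is convex, so the same estimate applies verbatim to every $p'\in F_0$.

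For part 2, property (1) of an ECCUS gives $u(\xi(s_2))-u(\xi(s_1))=h(\xi(s_1),\xi(s_2))\ge u'(\xi(s_2))-u'(\xi(s_1))$. Periodicity of $v$ and $v'$ turns this into
\[
p\cdot w+v(\xi(s_1))-v(\xi(s_1))\ \ge\ p'\cdot w+v'(\xi(s_1))-v'(\xi(s_1)),
\]
that is, $(p-p')\cdot w\ge 0$ for all $p'\in F_0$. Moreover $w\neq 0$ because $\xi$ is one-to-one by Lemma \ref{lem:cusproperty}(I). Hence $w/|w|\in n_p$.

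For part 1, I fix $s_0=0$ and apply the same inequality on $[0,s_m]$:
\[
p\cdot(\xi(s_m)-\xi(0))+v(\xi(s_m))-v(\xi(0))\ \ge\ p'\cdot(\xi(s_m)-\xi(0))+v'(\xi(s_m))-v'(\xi(0)).
\]
Since $v,v'$ are bounded, dividing by $|\xi(s_m)|$ and letting $m\to\infty$ gives $(p-p')\cdot q\ge 0$, provided $|\xi(s_m)|\to\infty$. This unboundedness is the one nontrivial point, and I would get it from Lemma \ref{lem:cusproperty}(III) together with Corollary \ref{cor:lower-gap}. Item (III) gives $u(\xi(s_m))-u(\xi(0))\ge\delta_1(s_m-1)\to\infty$. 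On the other hand $u(\xi(s_m))-u(\xi(0))\le |p|\,|\xi(s_m)-\xi(0)|+2\|v\|_\infty$, because $v$ is periodic and Lipschitz. Combining the two bounds forces $|\xi(s_m)|\to\infty$. Finally, $q\in S^1$ since it is a limit of unit vectors, so $q\in n_p$.
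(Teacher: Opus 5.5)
Your proof is correct and follows the paper's argument. You compare $u=p\cdot x+v$ with $p'\cdot x+v'$ via $h(x,y)\ge u'(y)-u'(x)$, use periodicity for part 2, and for part 1 divide by $|\xi(s_m)|$ after obtaining $|\xi(s_m)|\to\infty$ from Lemma \ref{lem:cusproperty}(III). You also make explicit details the paper leaves implicit: that $w\neq 0$ by injectivity of $\xi$, and the bound $u(\xi(s_m))-u(\xi(0))\le |p|\,|\xi(s_m)-\xi(0)|+2\|v\|_\infty$ that turns (III) into unboundedness. Your argument also applies directly to every $p'\in F_0$, since $\overline H(p')=0$ there.
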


Proof: (1) For any $p'\in \partial F_0$ and an associated viscosity solution $v_{p'}$ to (\ref{mech-cell}), we have that 
$$
\begin{array}{ll}
&h(\xi(0),\xi(s_m))=p\cdot \xi(s_m)-p\cdot\xi(0)+v_{p}(\xi(s_m))-v_{p}(\xi(0))\\[3mm]
&h(\xi(0),\xi(s_m))\geq p'\cdot \xi(s_m)-p'\cdot \xi(0)+v_{p'}(\xi(s_m))-v_{p'}(\xi(0))
\end{array}
$$
So
$$
p\cdot (\xi(s_m)-\xi(0))+v_{p}(\xi(s_m))-v_{p}(\xi(0))\geq p'\cdot (\xi(s_m)-\xi(0))+v_{p'}(\xi(s_m))-v_{p'}(\xi(0)).
$$
Owing to (3) in Lemma \ref{lem:cusproperty}, $\lim_{|t|\to \infty}|\xi(t)|=\infty$.  Dividing $|\xi(s_m)|$ on both sides and sending $m\to \infty$, we deduce that
$$
p'\cdot q\leq p\cdot q.
$$
Since this holds for all $p'\in \partial F_0$,   (1)  holds.

(2) Adopting  the same notations from the above proof,  
we have that 
$$
\begin{array}{ll}
&h(\xi(s_1),\xi(s_2))=p\cdot w\\[3mm]
&h(\xi(s_1),\xi(s_2))\geq p'\cdot w.
\end{array}
$$
Thus for $q={w\over |w|}$,
$$
p'\cdot q\leq p\cdot q \quad \text{for all $p'\in \partial F_0$}.
$$
Hence (2)  holds. 

\qed

The following two properties can be readily verified. We leave their proofs to the reader as an exercise.

\begin{lem}\label{lem:cusaubry}
If $\xi:\Rset\to \Rset^n$ is a periodic (modulo $\Zset^n$) ECCUS associated with $(p,v)$, then
$$
\xi(\Rset)\subset \mathcal{A}_p.
$$
\end{lem}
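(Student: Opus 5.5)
The plan is to show that every point $x=\xi(s_0)$ on a periodic ECCUS satisfies the Aubry-set characterization $\liminf_{t\to\infty}G_{t,p}(x,x)=0$. Points of $\mathbb{Z}^2$ on $\xi$ need separate care. Let $\xi(s+T)=\xi(s)+w$ with $w\in\mathbb{Z}^2$ and $T>0$. By Definition~\ref{def:cus}(1), for any subsolution $v$ and $u=p\cdot x+v$,
\[
h(\xi(s_0),\xi(s_0+kT))=u(\xi(s_0)+kw)-u(\xi(s_0))=k\,p\cdot w .
\]
Here $\overline H(p)=0$, so $h_p=h$.

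\textbf{Step 1: build the closed loop.} Concatenate the near-optimal curves realizing $h(\xi(s_0),\xi(s_0+T))$. On each interval $I_j$ these are the characteristics $\eta_j$ of Lemma~\ref{lem:cusproperty}(II), run in their own time parametrization. Across points of $Z_\xi$, where $\xi$ passes through a lattice point, use short almost-stationary connecting pieces of action at most $\epsilon$. The concatenation $\zeta$ is an absolutely continuous curve from $x$ to $x+w$ on some finite time interval $[0,t_\epsilon]$, with
\[
\int_0^{t_\epsilon}\Bigl(\tfrac12|\dot\zeta|^2-V(\zeta)\Bigr)\,ds\le p\cdot w+\epsilon .
\]
Subtracting $p\cdot\int\dot\zeta=p\cdot w$ gives $G_{t_\epsilon,p}(x,x)\le\epsilon$.

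\textbf{Step 2: reach arbitrarily large times.} The $\liminf$ requires $t\to\infty$. Here we use that $V=0$ at lattice points: if $Z_\xi\ne\emptyset$, one can wait arbitrarily long near a lattice point on the loop at arbitrarily small cost. Concretely, parking at distance $r$ from the lattice point for time $\tau$ costs at most $C r^2\tau$, and we choose $r\to0$ suitably. If $Z_\xi=\emptyset$, then $\xi$ is the reparametrization of a single periodic global characteristic. In that case, traversing the loop $k$ times gives times $kt_0\to\infty$ with action exactly $k\,p\cdot w$. In both cases we get $\liminf_{t\to\infty}G_{t,p}(x,x)=0$, so $x\in\mathcal A_p$. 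Combined with \eqref{eq:G-positive}, this also shows the loop is calibrated.

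\textbf{Step 3: the lattice points themselves.} The argument above already covers them, or one can use closedness of $\mathcal A_p$, since points of $Z_\xi$ are limits of points on the $I_j$.

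\textbf{Main obstacle.} I expect the main difficulty to be Step~1 near the points of $Z_\xi$. There $h$ is an infimum over arbitrarily long times that is not attained: the homoclinic pieces take infinite time. So one has to check carefully that truncating each homoclinic piece near $\mathbb{Z}^2$ and splicing it to the next costs only $o(1)$. I would handle this with the exponential convergence near the hyperbolic maximum, as in the proof of Lemma~\ref{lem:lower-bound}, together with the quadratic bound $-V(y)\le C|y|^2$ for a short straight connector.
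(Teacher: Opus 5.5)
Your argument is correct. The paper gives no proof to compare it with, since it leaves this lemma to the reader as an exercise.

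\textbf{A shorter route via universal characteristics.} The most direct verification in the paper's framework uses the definition of $\mathcal{A}_p$ through universal characteristics rather than the $G_{t,p}$ characterization. Let $v'$ be any other subsolution and $u'=p\cdot x+v'$. Periodicity of $v'$ and the ECCUS property for $v$ give $u'(\xi(s_0+kT))-u'(\xi(s_0))=kp\cdot w=u(\xi(s_0+kT))-u(\xi(s_0))=h(\xi(s_0),\xi(s_0+kT))$. Combined with $u'(y)-u'(x)\le h(x,y)$ and the additivity \eqref{eq:custriangle}, this forces equality on every subinterval. So $\xi$ is an ECCUS for $(p,v')$ for every subsolution $v'$. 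By part (II) of Lemma \ref{lem:cusproperty}, each piece of $\xi$ off $Z_\xi$ is then a characteristic calibrated by every subsolution, hence universal. Lattice points are stationary universal characteristics, because $V=0$ there.

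This route treats $Z_\xi=\emptyset$ and $Z_\xi\neq\emptyset$ uniformly and needs no time-stretching. It also yields the calibration by all subsolutions that your closing remark in Step~2 alludes to. Your route relies instead on the cited Fathi--Siconolfi characterization $\liminf_{t\to\infty}G_{t,p}(x,x)=0$.

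\textbf{Simplifying your own route.} Your route works, but it is heavier than it needs to be.

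\begin{itemize}
\item Step~1, which you flag as the main obstacle, is not one. The quantity $h$ is by definition an infimum over finite $t\ge 0$ and $\eta\in AC([0,t])$. So a finite-time curve from $x$ to $x+w$ with action at most $p\cdot w+\epsilon$ exists directly, with no truncation of homoclinic pieces and no exponential estimates.
\item Step~2 is cleanest as follows. Pick a lattice point $z=\xi(s_1)$ with $s_0<s_1<s_0+T$, and write $p\cdot w=h(x,z)+h(z,x+w)$ using \eqref{eq:custriangle}. Take $\epsilon/2$-optimal curves for each term and insert a rest at $z$ of any length, which costs nothing since $V(z)=0$. This gives $G_{t,p}(x,x)\le\epsilon$ for all sufficiently large $t$.
\item In the case $Z_\xi=\emptyset$ your iteration argument is right. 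You should say explicitly why the period time is finite: $|\dot\eta|=\sqrt{-2V(\eta)}$ is bounded below on the compact arc $\xi([s_0,s_0+T])$, which avoids $\Zset^2$.
\end{itemize}
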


\begin{lem}\label{lem:cusstability}
For $m\geq 1$,  let $\xi_m$ be an  ECCUS associated with $(p_m,v_m)$.  If $\lim_{m\to \infty}p_m=p$, $\lim_{m\to \infty}v_m=v$ and $\lim_{m\to \infty}\xi_m(s)=\xi(s)$ for all $s\in \Rset$, then $\xi$ is an ECCUS associated with $(p,v)$. 
\end{lem}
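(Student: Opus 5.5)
The plan is to prove Lemma~\ref{lem:cusstability} by passing to the limit in the two defining conditions of an ECCUS. Unit speed is preserved only as a Lipschitz bound, so the second condition needs a separate argument. First, the $\xi_m$ are $1$-Lipschitz, hence so is the pointwise limit $\xi$; by Arzel\`a--Ascoli the convergence $\xi_m\to\xi$ is in fact locally uniform. Write $u_m=p_m\cdot x+v_m$ and $u=p\cdot x+v$, and assume, as the statement intends, that $v_m\to v$ locally uniformly. Then $u_m\to u$ locally uniformly.

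For condition 1, we have $u_m(\xi_m(s_2))-u_m(\xi_m(s_1))=h(\xi_m(s_1),\xi_m(s_2))$. The left side converges to $u(\xi(s_2))-u(\xi(s_1))$. The right side converges to $h(\xi(s_1),\xi(s_2))$, because $h$ is continuous: joining nearby points by short straight segments at unit speed costs $O(|x-x'|)$ (recall $-V\le C$ and $V\le 0$), which gives $|h(x,y)-h(x',y')|\le C(|x-x'|+|y-y'|)$. The limit $v$ is again a subsolution of~\eqref{mech-cell} for $p$, by stability of viscosity subsolutions and continuity of $\overline H$.

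The main obstacle is condition 2, namely $|\dot\xi|=1$ a.e.; the Lipschitz bound only gives $|\dot\xi|\le 1$. I would argue that the length of $\xi$ on any interval $[s_1,s_2]$ equals $s_2-s_1$, i.e.\ no length is lost in the limit. Fix $s_1<s_2$ and subdivide $[s_1,s_2]$ into pieces of length at most a small $\epsilon$.

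On a piece $[a,b]$ that stays a definite distance from $\Zset^2$, Lemma~\ref{lem:cusproperty}(II) says each $\xi_m$ is a unit-speed reparametrized characteristic. These characteristics solve $\ddot\eta=-DV(\eta)$ with energy $\overline H(p_m)$ bounded and speed $\sqrt{2(\overline H(p_m)-V)}$ bounded below away from $\Zset^2$. Hence their unit tangents are equicontinuous, and chord length $|\xi_m(b)-\xi_m(a)|\ge (1-o_\epsilon(1))(b-a)$ uniformly in $m$. This inequality passes to the limit, so the length of $\xi$ on such a piece is at least $(1-o(1))(b-a)$.

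The pieces near $\Zset^2$ are the delicate part. By Lemma~\ref{lem:cusproperty}(I) and the uniform lower bound of (III), $\xi_m$ passes within $\epsilon$ of a given lattice point only for an $s$-interval of uniformly bounded length. I would then use the exponential-decay argument of Lemma~\ref{lem:lower-bound}. On an excursion near the origin where $u_m$ increases by a small amount $\rho$, the arc length is $O(\rho)$ only when $\overline H(p_m)$ is near $0$. If $\overline H(p)>0$ there is nothing to prove, since then $\xi_m$ stays at positive speed everywhere. If $\overline H(p)=0$, I would combine two facts:
\begin{itemize}
\item the arc of $\xi_m$ near the origin has length comparable to the increment of $u_m$ along it, via the bound $\tfrac12|\dot\eta|^2-V+\overline H\ge\sqrt{2(\overline H-V)}\,|\dot\eta|$ read in reverse together with (III);
\item $u$ is strictly increasing along $\xi$, by the limit of (I) and (III).
\end{itemize}
Together these show that the limit cannot collapse a positive $s$-interval to a point. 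More precisely, $u(\xi(s))$ would be constant on such an interval, contradicting $u(\xi(s_2))-u(\xi(s_1))\ge\delta_1(|s_2-s_1|-1)$ applied on rescaled subintervals; refining Lemma~\ref{lem:lower-bound} to small $\epsilon$ gives $u_m$-increment $\ge\delta_\epsilon$ whenever the length is $\ge\epsilon$.

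Finally, letting $\epsilon\to0$ shows that the length of $\xi$ on $[s_1,s_2]$ equals $s_2-s_1$. Combined with $|\dot\xi|\le1$, this gives $|\dot\xi|=1$ a.e.

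If this direct route becomes too technical, I would instead reparametrize $\xi$ by arc length. Conditions 1 and 2 survive the reparametrization once $\xi$ is shown to be injective and nonconstant on every interval, which follows from the strict monotonicity of $u$ along it. This suffices for all later applications of the lemma, and I would note that the limit of $\xi_m$ up to reparametrization is what is really needed.
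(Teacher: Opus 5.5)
The paper leaves this lemma as an exercise, so I am assessing your argument on its own terms.

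\textbf{What is sound.}
\begin{itemize}
\item Equi-Lipschitz plus pointwise convergence gives locally uniform convergence.
\item $h$ is Lipschitz, so condition~1 passes to the limit, and $v$ stays a subsolution.
\item On parameter intervals where $\xi_m$ stays at distance $\ge d$ from $\Zset^2$, the unit-speed characteristics have speed at least $\sqrt{-2V}\ge c_d>0$ and acceleration $|DV|\le C$. Their curvature is therefore bounded in terms of $d$, which gives $C^1$ convergence.
\item Your non-collapse argument works once made precise. An interval of length $\epsilon<1$ contains at most one point of $Z_{\xi_m}$, so a side of length $\ge\epsilon/2$ lies in a single characteristic. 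Lemma~\ref{lem:lower-bound} (with its $p$-independent $\delta$) plus monotonicity of $u_m\circ\xi_m$ then gives an increment $\ge\delta_{\epsilon/2}$, and this survives the limit.
\end{itemize}

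\textbf{The gap.} It lies in your concluding sentence, that letting $\epsilon\to0$ yields length $s_2-s_1$. You never show that the $s$-measure of the near-lattice pieces tends to $0$. Part (III) of Lemma~\ref{lem:cusproperty} bounds the time spent near a lattice point only by about $1$. Non-collapse of the limit is qualitative, and for a general $1$-Lipschitz curve it is compatible with $|\dot\xi|<1$ on a set of positive measure. The quantitative claims you offer there are also wrong:
\begin{itemize}
\item Near a nondegenerate maximum, an excursion of size $r$ has arc length of order $r$ but $u_m$-increment only $O(r^2)$. So arc length is not $O(\rho)$.
\item Along a characteristic the increment equals $\int\sqrt{2(\overline H-V)}\,ds$. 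Its weight degenerates at the lattice point, so it cannot make length comparable to increment.
\end{itemize}

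\textbf{How to close it.} You already have all the ingredients; what is missing is one observation. Non-collapse makes $u\circ\xi$ strictly increasing, so $\xi$ is injective. Since $\xi([s_1,s_2])$ is bounded, it meets finitely many lattice points, each at most once, so $Z_\xi\cap[s_1,s_2]$ is finite.

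Now take $s_0\notin Z_\xi$ and choose $\tau,d>0$ with $\mathrm{dist}(\xi(s),\Zset^2)\ge 2d$ on $J=[s_0-\tau,s_0+\tau]$. By uniform convergence, $\mathrm{dist}(\xi_m(s),\Zset^2)\ge d$ on $J$ for large $m$. So $J$ lies in one component of $\Rset\setminus Z_{\xi_m}$, and your curvature bound gives $\dot\xi_m\to w$ uniformly on $J$ along a subsequence, with $|w|=1$ and $\dot\xi=w$. Hence $|\dot\xi|=1$ on $\Rset\setminus Z_\xi$, i.e.\ almost everywhere. No estimate of how long $\xi_m$ lingers near $\Zset^2$ is needed, and the near-lattice paragraph can be removed.

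Drop your fallback of reparametrizing the limit by arc length. It proves a weaker statement than the lemma, and later uses rely on the actual pointwise limit and its parametrization (e.g.\ $\xi_\infty(T_\infty)-\xi_\infty(0)=(m,k)$ in Corollary~\ref{cor:aubrynonempty}).
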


Next we prove the following Corollary.

\begin{cor}\label{cor:aubrynonempty} For any $p\in \partial F_c$, there exists an periodic ECCUS $\xi$ such that
$$
\xi(\Rset)\subset \mathcal{A}_p.
$$
In particular, when $n = 2$, if there exists a point $q_p\in n_p\cap \Rset\Zset^2$, then there exists an ECCUS $\xi$ whose first homology class is $(m, k) \in \mathbb{Z}^2$. Here $(m,k)$ is a primitive integer vector that is on the same direction as $q_p$.

\end{cor}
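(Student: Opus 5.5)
We construct the periodic ECCUS directly from minimizers of the periodic action, take a limit, and then use Lemma \ref{lem:cusaubry} to place it in $\mathcal{A}_p$. (Here $p\in\partial F_0$, so $\overline H(p)=0$ and $h_p=h$.)

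\textbf{Step 1: choosing the homology class.} In two dimensions, with $q_p\in n_p\cap\Rset\Zset^2$, let $w=(m,k)$ be the primitive integer vector in the direction of $q_p$. For a general $p$, first produce some periodic ECCUS. Then reduce to a class $w$ with $w/|w|\in n_p$; Lemma \ref{lem:direction}(2) forces any periodic ECCUS to have this property anyway.

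\textbf{Step 2: the key identity at the origin.} Since $O=(0,0)$ is an equilibrium, we expect $h(O,jw)=p\cdot jw$ for all $j\ge 1$, i.e. $G_{t,p}(O,O)\to 0$ along the class $w$.

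\emph{Lower bound.} Lemma \ref{h-bound}(1) gives $h(O,jw)\ge p\cdot jw$.

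\emph{Upper bound.} By the support property of $w$, $p\cdot w=\max_{p'\in F_0}p'\cdot w$. For $\lambda>1$ we have $\lambda p\notin F_0$, hence $\overline H(\lambda p)>0$.
\begin{itemize}
\item Take Mather measures for points $p_\epsilon\to p$ from outside $F_0$ with normals tending to $w/|w|$.
\item For those $p_\epsilon$ whose normal is rational in direction $w$, property (P4) applies, since $c>0$. It gives periodic minimizers of class $w$ with action exactly $p_\epsilon\cdot w-\overline H(p_\epsilon)T$ per period.
\item Minimizing over closed curves through a point, these minimizers pass arbitrarily close to $O$, because they must stay in the region where $V\approx 0$ to keep the action small.
\item Pass to the limit $c\downarrow 0$. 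By lower semicontinuity of $h$ in the endpoints, continuity near $O$, and Lemma \ref{lem:lower-bound}, we obtain $h(O,w)\le p\cdot w$.
\end{itemize}
Then additivity gives $h(O,jw)=p\cdot jw$ for every $j$.

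\textbf{Step 3: building the ECCUS.} Fix a viscosity subsolution $v$ and set $u=p\cdot x+v$. Step 2 gives $u(jw)-u(O)=h(O,jw)$.
\begin{itemize}
\item Take a minimizing curve for $h(O,w)$. It is a possibly countable concatenation of homoclinic-type pieces, or a single minimizer passing through $\Zset^2$ only finitely often, by Lemma \ref{lem:lower-bound}.
\item Reparametrize it by arclength and extend periodically by translation by $w$, as in \eqref{eq:extension}.
\item Lemma \ref{lem:cusproperty}(III) shows the length is finite.
\item The calibration identity on $[0,\text{period}]$ together with Lemma \ref{calib} forces condition 1 of Definition \ref{def:cus} on every subinterval: equality for the whole curve forces equality on each piece, because each piece satisfies $\ge$.
\end{itemize}
If minimizers do not exist directly, we take approximate minimizers and pass to the limit using Lemma \ref{lem:cusstability}. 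The resulting $\xi$ is a periodic ECCUS of class $w$, and Lemma \ref{lem:cusaubry} yields $\xi(\Rset)\subset\mathcal{A}_p$. Primitivity of $w$ comes from the choice in Step 1; if the constructed class were $jw$ with $j\ge 2$, the non-self-intersection argument of Lemma \ref{lem:G-property}(1) reduces it to $w$.

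\textbf{Main obstacle.} The hardest step is the upper bound $h(O,w)\le p\cdot w$ in Step 2, i.e. showing that $O$ lies in the ``Aubry set'' in class $w$. It relies on approximating from $c>0$ and controlling minimizers near the degenerate point $O$.
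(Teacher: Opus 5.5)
\textbf{The gap is in Step 2.} The identity $h(O,w)=p\cdot w$ is false in general. So is the claim it rests on, namely that the periodic minimizers for $p_\epsilon$ pass arbitrarily close to $O$ ``to keep the action small''. Their action over one period is $p_\epsilon\cdot w-\overline H(p_\epsilon)T$, which is not small, and nothing pulls them toward the maximizer.

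Here is a counterexample. Take $V$ satisfying Assumption (M) with $V\le -R$ on the annuli $\{r_1\le |x-z|\le r_2\}$, $z\in\Zset^2$, $r_2<0.3$, and $V\equiv-\epsilon$ outside $B_{0.3}(0)+\Zset^2$. This is the same mechanism as in Example \ref{example:removable}.
\begin{itemize}
\item Any curve from $O$ to $e_1=(1,0)$ must cross two disjoint annuli. There $\frac12|\dot\eta|^2-V\ge\sqrt{2R}\,|\dot\eta|$, so $h(O,e_1)\ge 2\sqrt{2R}\,(r_2-r_1)$.
\item For $y=(0,\frac12)$, Lemma \ref{h-bound}(1) and the straight segment traversed at speed $\sqrt{2\epsilon}$ give $p\cdot e_1\le h(y,y+e_1)\le\sqrt{2\epsilon}$ for every $p\in F_0$.
\end{itemize}
Hence for $R$ large and $p\in\partial F_0$ with $e_1\in n_p$, you get $h(O,e_1)>p\cdot e_1$. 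Step 3 then cannot work either. Any periodic ECCUS of class $e_1$ passing through an integer point $z$ would force $h(O,e_1)=h(z,z+e_1)=p\cdot e_1$. So the curve the corollary asserts must avoid $\Zset^2$ entirely, and no construction routed through $O$ can produce it.

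\textbf{The repair.} Use the approximating objects you already introduce in Step 2 directly, which is what the paper does.
\begin{itemize}
\item Choose $p_l\to p$ with $\overline H(p_l)=1/l$ and with $q_p$ the outer normal of $F_{1/l}$ at $p_l$.
\item By (P4), take a periodic orbit $\xi_l$ on $\mathcal M_{p_l}$ of the primitive class $w$, reparametrized by arclength, with unit-speed period $T_l$.
\item Corollary \ref{cor:lower-gap} gives the uniform bound $|w|\le T_l\le \delta_1^{-1}\sqrt{2(1-\min_{\Rset^2}V)}\,|w|+1$.
\item After an integer translation placing $\xi_l(0)\in[0,1]^2$, a subsequence converges. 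Lemma \ref{lem:cusstability} then yields an ECCUS $\xi_\infty$ with $\xi_\infty(t+T_\infty)=\xi_\infty(t)+w$.
\item Lemma \ref{lem:cusaubry} places $\xi_\infty$ in $\mathcal A_p$.
\end{itemize}
Whether $\xi_\infty$ meets $\Zset^2$ never enters the argument.

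\textbf{Step 1 for a general $p$.} Your instruction to ``first produce some periodic ECCUS'' cannot be carried out. If $n_p=\{q\}$ with $q\notin\Rset\Zset^2$, Lemma \ref{lem:direction}(2) excludes every periodic ECCUS. For such $p$ only a non-periodic ECCUS in $\mathcal A_p$ is available. The paper obtains it as a limit of arclength-reparametrized orbits on $\mathcal A_{p_m}$ with $\overline H(p_m)=1/m$, or from an unbounded orbit of $\mathcal M_p$ when the cell problem has non-unique solutions.
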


Proof:  {\bf Case 1:} Assume that the cell problem ~\eqref{mech-cell} has a unique solution up to a constant. Then any global characteristic lies entirely on \(\mathcal{A}_p\). Therefore, it suffices to establish the existence of an ECCUS.

To this end, choose a sequence \(\{p_m\}_{m \geq 1}\) such that \(p_m \to p\) and \(\overline{H}(p_m) = \frac{1}{m}\). Let \(\xi_m\) be an orbit on \(\mathcal{A}_{p_m}\); then \(\xi_m\) is unbounded ($\lim_{|t|\to \infty}|\xi_m(t)|=\infty$).  With a slight abuse of notation, we denote by \(\xi_m\) its unit-speed reparametrization.

Passing to a subsequence if necessary, we may assume that \(\xi_m \to \xi\), where \(\xi\) is an ECCUS by the stability lemma~\ref{lem:cusstability}. Here for convenience, we omit the dependence of $\xi$ on viscosity subsolutions to (\ref{mech-cell}).

\medskip

{\bf Case 2:}  Assume that the cell problem~\eqref{mech-cell} admits more than one solution up to an additive constant.  
By Corollary~\ref{cor:lower-gap}, for any orbit $\xi$ in $\mathcal{M}_p$, either $\lim_{t\to\infty}\xi(t)=(0,0)$ or $\lim_{t\to\infty}|\xi(t)|=\infty$ (and similarly as $t\to -\infty$).  
Since $\mathcal{M}_p$ is the uniqueness set for solutions of the cell problem~\eqref{mech-cell}, it must contain an orbit $\xi$ such that $\lim_{|t|\to\infty}|\xi(t)|=\infty$.  
Consequently, a unit-speed reparametrization of $\xi$ is an ECCUS.

Now let us assume that $n=2$ and $q_p=\lambda (m,k)$ for a primitive integer vector $(m,k)$. For $l\in \Nset$,  we may choose $p_l\to p$ such that $\overline H(p_l)={1\over l}$ and $q_p$ is the outward unit normal vector at $p_l$ of the set $F_{1\over l}$. Let $\xi_{l}:\Rset\to \Rset^2$ be a periodic (modulo  $\Zset^2$) orbit on $\mathcal{M}_{p_l}$. By abusing the notation, denote by $\xi_{l}$ its unit reparametrization and $T_{l}$ be its minimum period. Then $\xi_{l}(T_{l})-\xi_{l}(0)=(m,k)$.  Owing to Corollary \ref{cor:lower-gap}, we have that
$$
\sqrt{m^2+k^2}\leq T_{l}\leq {1\over \delta_1}\sqrt{m^2+k^2}\sqrt{2(1-\min_{\Rset^2}V)}+1. 
$$
 Passing to a subsequence if necessary, we may assume that as $l \to \infty$,  
$T_l \to T_\infty$ and $\xi_l \to \xi_\infty$. By stability,  $\xi_\infty$ is a periodic (modulo $\Zset^2$) ECCUS satisfying  
\[
\xi_\infty(T_\infty) - \xi_\infty(0) = (m,k).
\]

\qed

\begin{lem}\label{lem:twopointintersection}
Given \( p_1, p_2 \in \partial F_0 \), suppose two distinct ECCUSs \( \xi_1 \) and \( \xi_2 \), associated with $(p_1,v_1)$ and $(p_2,v_2)$ respectively, intersect at two distinct points. Then there exist two distinct integer points in \( \xi_1(\mathbb{R}) \cap \xi_2(\mathbb{R}) \). In particular, this implies that \( n_{p_1} \cap n_{p_2}\cap \Rset\Zset^2 \neq \emptyset \) or \( n_{p_1} \cap n_{-p_2} \cap \Rset\Zset^2\neq \emptyset \) .

\end{lem}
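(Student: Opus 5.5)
Suppose $\xi_1(s_1)=\xi_2(t_1)=x$ and $\xi_1(s_2)=\xi_2(t_2)=y$ with $x\neq y$ and $s_1<s_2$. The plan is to show that every intersection point off $\mathbb{Z}^2$ can be traded for integer intersection points. We then use Lemma~\ref{lem:direction}(2) to produce a common rational direction.

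\textbf{Step 1 (intersections off $\mathbb{Z}^2$ propagate to the nearest integer points).} Let $x\notin\mathbb{Z}^2$ be an intersection point. By Lemma~\ref{lem:cusproperty}(II), near $x$ both curves are unit-speed reparametrizations of global characteristics $\eta_1$ of $v_1$ and $\eta_2$ of $v_2$. Both are solutions of $\ddot\eta=-DV(\eta)$ with the same energy, $\frac12|\dot\eta|^2+V=\overline H=0$ on $\partial F_0$. So at $x$ their velocities have the same speed $\sqrt{-2V(x)}>0$, and they differ only in direction.
\begin{itemize}
\item If the directions at $x$ agree (or are opposite), uniqueness for the ODE shows that $\eta_1$ and $\eta_2$ coincide as trajectories up to orientation. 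They therefore continue to coincide until they hit $\mathbb{Z}^2$.
\item If the directions at $x$ are transversal, we aim for a contradiction by a cut-and-paste argument. Since $x$ and $y$ lie on both curves, (\ref{eq:custriangle}) gives
\[
h(x,y)=u_1(y)-u_1(x)\quad\text{or}\quad h(y,x)=u_1(x)-u_1(y),
\]
depending on orientation. The analogous identity holds for $u_2$. Swapping arcs between $x$ and $y$ then produces a curve from $\xi_1(s_1-1)$ to $\xi_1(s_2+1)$ that realizes $h$ but has a corner at the non-integer point $x$. This contradicts the smoothness in Lemma~\ref{lem:cusproperty}(II), because a minimizer of $h$ away from $\mathbb{Z}^2$ solves the Euler–Lagrange equation.
\end{itemize}
Hence the two curves share the arc through $x$. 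Because they are distinct, this shared arc must terminate, in each direction, either at an integer point or where the curves separate. Separation at a non-integer point is excluded by the same ODE uniqueness argument. So $x$ lies on a common arc whose endpoints are integer points; when an end of the arc goes to infinity we argue as in the next step.

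\textbf{Step 2 (two distinct integer intersection points).} Apply Step 1 to both $x$ and $y$ (an integer intersection point is kept as is). This gives integer points $z_1\in\xi_1(\mathbb{R})\cap\xi_2(\mathbb{R})$ near $x$ and $z_2$ near $y$. If $z_1\neq z_2$ we are done. If the only integer intersection point is a single $z$, then near $z$ the curves cannot share an arc unboundedly in both directions: by Lemma~\ref{lem:cusproperty}(II) a shared arc is a common characteristic, and extending it would make $\xi_1=\xi_2$. So the common arcs on the two sides of $z$ end at distinct integer points, and these are both intersection points.

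\textbf{Step 3 (the rational direction).} With $z_1\neq z_2$ on both curves, set $w=z_2-z_1\in\mathbb{Z}^2$. If $z_2$ comes after $z_1$ along both $\xi_1$ and $\xi_2$, Lemma~\ref{lem:direction}(2) gives $w/|w|\in n_{p_1}\cap n_{p_2}$. If the order is reversed along $\xi_2$, the same lemma gives $-w/|w|\in n_{p_2}$. Since $\overline H$ is even, $n_{-p_2}=-n_{p_2}$, so $w/|w|\in n_{p_1}\cap n_{-p_2}$. In either case the common direction lies in $\mathbb{R}\mathbb{Z}^2$.

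\textbf{Expected obstacle.} The main difficulty is Step 1, and in particular excluding the situation where the two curves merge at a non-integer point and then separate. The corner/ODE-uniqueness argument is meant to handle this. The most delicate case is when the curves run in opposite orientations, because the relevant arcs then involve $h(x,y)$ versus $h(y,x)$. I would first try to handle it by using the reversibility $L(q,x)=L(-q,x)$, which gives $h(x,y)=h(y,x)$.
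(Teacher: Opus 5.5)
Your Step~1 is correct and gives a reasonable variant of the paper's mechanism. The paper glues the $\xi_2$-arc into $\xi_1$, checks that the glued curve is again an ECCUS for $(p_1,v_1)$, and then uses uniqueness of $v_1$-characteristics through a non-integer point (velocity $Du_1$). You instead treat transversal meetings by a corner in an $h$-minimizer and tangential ones by energy plus ODE uniqueness. Your use of $h(x,y)=h(y,x)$ replaces the paper's passage to $(-p_2,-v_2)$, and your Step~3 is exactly the paper's ending.

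\textbf{The gap is in Step~2}, in the case where only one integer point $z$ is found. You assert that the common arcs on the two sides of $z$ end at distinct integer points, but nothing forces the curves to share an arc on both sides of $z$. At an integer point your ODE-uniqueness argument is unavailable. Moreover, if two ECCUS for the same $(p,v)$ pass through $z$, following one up to $z$ and the other afterwards is again an ECCUS (triangle inequality for $h$ plus Lemma~\ref{h-bound}(1)); this is exactly the gluing (\ref{eq:glueirrational}). The resulting pair arrives at $z$ along different orbits and leaves along a common half-line. It therefore meets in infinitely many points, yet a second integer intersection point need not exist: when $n_{p_1}$ is a single irrational direction, Lemma~\ref{lem:direction}(2) shows an ECCUS contains at most one integer point. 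No local reasoning at $z$ can rescue this case; it has to be excluded, not proved.

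\textbf{The missing idea is the paper's reduction to a ``bubble.''} After time translation, and possibly reversing $\xi_2$, take $s_1,s_2>0$ with $\xi_1(0)=\xi_2(0)$, $\xi_1(s_1)=\xi_2(s_2)$ and $\xi_1((0,s_1))\cap\xi_2((0,s_2))=\emptyset$. Your own tools then finish the argument. At a non-integer endpoint $x$, the swapped curve (follow $\xi_1$ to $x$, then $\xi_2$ to $y$, then $\xi_1$) realizes $h$, so it is smooth at $x$. Hence $\xi_2$ leaves $x$ with the same velocity as $\xi_1$, and ODE uniqueness makes the two open arcs overlap near $x$, a contradiction. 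Inside a bubble, the case of opposite velocities, which your Step~1 groups with the tangential one, is a cusp of the swapped curve and is excluded by the same smoothness. So both endpoints are integer points, distinct by construction, and Step~3 applies.

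The paper's proof starts directly from such a pair. If the curves merely overlap without forming a bubble, a bounded common arc already supplies two integer endpoints. An overlap along a half-line from a single integer point is the configuration above; in the paper's applications it is ruled out by comparing asymptotic directions via Lemma~\ref{lem:direction}(1).
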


Proof:  Note that for $i=1,2$,  $u_i=p_i\cdot x+v_i$ is strictly increasing along $\xi_i$. 

Since $\xi_1$ and $\xi_2$ are distinct, after a time translation if necessary, there exist $s_1,s_2>0$ such that one of the following holds:

\medskip
\noindent
\textbf{Case 1:}
\[
\xi_1(0)=\xi_2(0), \quad 
\xi_1(s_1)=\xi_2(s_2), \quad 
\xi_1((0,s_1)) \cap \xi_2((0,s_2)) = \emptyset.
\]

\medskip
\noindent
\textbf{Case 2:}
\[
\xi_1(0)=\xi_2(s_2), \quad 
\xi_1(s_1)=\xi_2(0), \quad 
\xi_1((0,s_1)) \cap \xi_2((0,s_2)) = \emptyset.
\]

Let us first focus on Case 1.

\medskip

{\bf $\bullet$ Gluing of two curves.} Define the following curve (see Figure \ref{f4} below)
$$
\widetilde \xi(s)=
\begin{cases}
\xi_1(s) \quad \text{for $t\leq 0$}\\[3mm]
\xi_2(s) \quad \text{for $0\leq t\leq s_2$}\\[3mm]
\xi_1(s-s_2+s_1) \quad \text{for $t\leq s_2$.}
\end{cases}
\medskip
$$

\begin{center}
\includegraphics[scale=0.7]{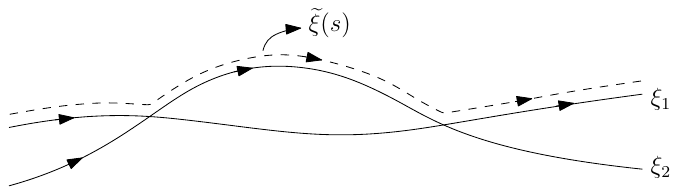}
\captionof{figure}{}
 \label{f4}
\end{center}

We claim that $\tilde \xi(t)$ is also an ECCUS associated with $(p_1,v_1)$. We will show that  all $0\leq \alpha<\beta\leq s_2$,
$$
u_1(\xi_2(\beta))-u_1(\xi_2(\alpha))=h(\xi_2(\alpha),\xi_2(\beta)).
$$
The argument for other situations are similar.  In fact,
$$
\begin{array}{ll}
u_1(\xi_2(s_2))-u_1(\xi_2(0))=u_1(\xi_1(s_1))-u_1(\xi_1(0))&=h(\xi_1(s_1),\xi_1(0))\\[3mm]
&=h(\xi_2(s_2),\xi_2(0)).
\end{array}
$$
Meanwhile,  for $t_0=0$, $t_1=\alpha$, $t_2=\beta$ and $t_3=s_2$, 
$$
u_1(\xi_2(s_2))-u_1(\xi_2(0))=\sum_{i=0}^{2}(u_1(\xi_2(t_{i+1}))-u_1(\xi_2(t_i))).
$$
Due to (\ref{eq:custriangle}),
$$
h(\xi_2(s_2))-h(\xi_2(0))=\sum_{i=0}^{2}(h(\xi_2(t_{i+1}))-h(\xi_2(t_i))).
$$
Combining with $u_1(y)-u_1(x)\leq h(x,y)$, our claim holds.

If $\xi_1(0)\not\in \Zset^n$, then both $\xi_1$ and $\widetilde \xi$ are unit reparametrization of a characteristics of $v_1$ near $t=0$. Accordingly, they must coincide  near 0 after a suitable time transformation, which contradicts the assumption in Case 1. Similarly, we deduce that $\xi_1(s_1)\in \Zset^n$. Then due to (2) in Lemma \ref{lem:direction}, $n_{p_1}\cap n_{p_2}\not=\emptyset$.

As for Case 2, the proof is similar by noticing that $\xi_2(s_2-t)$ is an ECCUS associated with $(-p_2,-v_2)$.  It leads to 
$n_{p_1}\cap n_{-p_2}\cap \Rset\Zset^2\not=\emptyset$
\qed

\section{Proof of Theorem \ref{main}}\label{sec:main}

We first associate each ECCUS with something similar to a circle map, thereby extending the analogous construction used for \( F_c \) with \( c > 0 \) (see \cite{B1}). Recall that an ECCUS is defined in Definition~\ref{def:cus}.

\medskip

{\bf $\bullet$ Curved coordinate system.} Given a primitive integer vector $(m,n)$ and a periodic (modulo $\Zset^2$) ECCUS $\eta$ associated with $(\widetilde{p},\widetilde{v})$, for $k \in \Zset$ define
\be\label{eq:curvecoordinate}
\eta_k(t) = \eta(t) + k(m,n), \qquad t \in \Rset.
\ee
Assume that for any two distinct integers $k_1$ and $k_2$, 
\[
\eta_{k_1}(\Rset) \cap \eta_{k_2}(\Rset) = \emptyset.
\]
Now let $\xi$ be an ECCUS associated with $(p',v')$, and suppose that $\xi$ intersects each $\eta_k$ exactly once for all $k \in \Zset$. 
For each $k \in \Zset$, choose the unique $t_k \in \Rset$ such that (see Figure \ref{f5})
\[
\eta_k(t_k) \in \xi(\Rset).
\]
Define $g_\xi:\Zset\to \Rset$ as
\be\label{eq:gfunction}
g_\xi(k)=t_k.
\ee

\begin{center}
\includegraphics[scale=0.3]{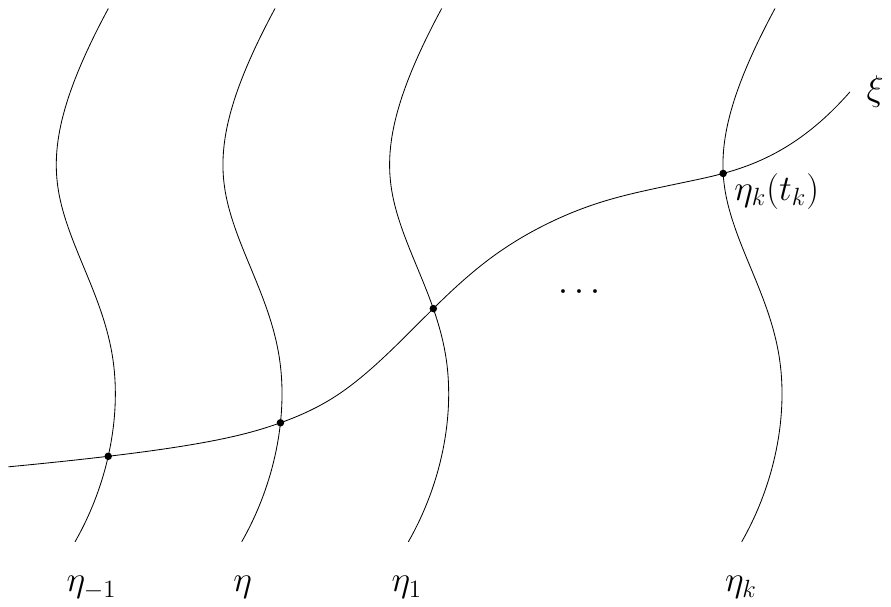}
\captionof{figure}{}
 \label{f5}
\end{center}

\begin{lem} [Monotonicity]\label{lem:mon} 
If  $g_\xi$ is not strictly monotonic, then  
$$
{(m,n)\over |(m,n)|}\in n_{p'} \quad \mathrm{or} \quad -{(m,n)\over |(m,n)|}\in n_{p'}.
$$
\end{lem}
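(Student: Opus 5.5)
The plan is to compare $\xi$ with its integer translate $\xi+(m,n)$ and turn a failure of monotonicity of $g_\xi$ into an intersection of these two curves. Lemma~\ref{lem:direction}(2) then converts such an intersection into the desired normal direction. I assume, as in the setup before the lemma, that the curves $\eta_k$ are pairwise disjoint and that $v'$ is $\Zset^2$-periodic.

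First observation: since $v'$ is $\Zset^2$-periodic and $h$ is invariant under integer translations, the curve $\xi^+(s):=\xi(s)+(m,n)$ is again an ECCUS associated with the same pair $(p',v')$. Indeed,
\[
u'(x+(m,n))-u'(y+(m,n))=u'(x)-u'(y).
\]
Also $\xi^+$ meets each $\eta_{k+1}=\eta_k+(m,n)$ exactly once, namely at $\eta_{k+1}(t_k)$, where $t_k=g_\xi(k)$.

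Key reduction: if $\xi(\Rset)\cap\xi^+(\Rset)\neq\emptyset$, the conclusion follows. Suppose $\xi(s)=\xi(s')+(m,n)$ for some $s,s'$. Then $s\neq s'$, since $(m,n)\neq 0$. If $s>s'$, apply Lemma~\ref{lem:direction}(2) to the pair $s'<s$ with $w=(m,n)$; this gives $(m,n)/|(m,n)|\in n_{p'}$. If $s<s'$, apply it to $s<s'$ with $w=-(m,n)$; this gives $-(m,n)/|(m,n)|\in n_{p'}$. So it suffices to show that non-monotonicity of $g_\xi$ forces $\xi$ and $\xi^+$ to meet.

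Topological step, which I expect to be the main obstacle. Since the $\eta_k$ are disjoint, properly embedded curves (unbounded in both directions by Lemma~\ref{lem:cusproperty}(III)), they cut the plane into strips. Let $S_k$ be the closed strip between $\eta_k$ and $\eta_{k+1}$. Because $\xi$ meets each $\eta_k$ exactly once, the points $\xi^{-1}(\eta_k)$ are monotone in $k$. Hence the arc $\alpha_k$ of $\xi$ between its crossings with $\eta_k$ and $\eta_{k+1}$ lies in $S_k$, and it joins $\eta_k(t_k)$ to $\eta_{k+1}(t_{k+1})$. Likewise $\xi^+$ contains an arc $\beta_k\subset S_k$ joining $\eta_k(t_{k-1})$ to $\eta_{k+1}(t_k)$.

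If $g_\xi$ is not strictly monotone, there is a $k$ with $(t_k-t_{k-1})(t_{k+1}-t_k)\le 0$.
\begin{itemize}
\item If one of these differences vanishes, $\alpha_k$ and $\beta_k$ share an endpoint, so the curves meet directly.
\item Otherwise the endpoints interlace: on $\eta_k$ the point $\eta_k(t_k)$ lies, say, above $\eta_k(t_{k-1})$, while on $\eta_{k+1}$ the point $\eta_{k+1}(t_{k+1})$ lies below $\eta_{k+1}(t_k)$, with orientation measured by the parametrizations of the $\eta$'s.
\end{itemize}
In the interlacing case, the strip $S_k$ is homeomorphic to $[0,1]\times\Rset$, with the boundaries $\eta_k$ and $\eta_{k+1}$ corresponding to $\{0\}\times\Rset$ and $\{1\}\times\Rset$. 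Since $\alpha_k$ is compact, it separates $S_k$ into a part above and a part below. The arc $\beta_k$ starts below $\alpha_k$ on one boundary and ends above it on the other, so a Jordan-curve argument on this closed disk-like region forces $\alpha_k\cap\beta_k\neq\emptyset$.

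Making this separation argument rigorous is the delicate point. The $\eta$'s and $\xi$ may pass through integer points and are only Lipschitz, so I would use the homeomorphism of $S_k$ with a standard strip induced by the $\Zset$-equivariant family $\{\eta_k\}$, and verify that $\alpha_k$ and $\beta_k$ meet the boundary only at their endpoints (which follows from the single-crossing hypothesis). Combined with the key reduction, this proves the lemma.
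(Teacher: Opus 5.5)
Your proposal is correct and follows the same route as the paper: if $g_\xi$ is not strictly monotone, then $\xi$ and $\xi+(m,n)$ must meet, and any meeting point $\xi(s)=\xi(s')+(m,n)$ gives $\pm(m,n)/|(m,n)|\in n_{p'}$ via Lemma~\ref{lem:direction}(2), with the sign set by the order of $s$ and $s'$. The paper only asserts the intersection step, which you justify with the interlacing-endpoints argument in the strip between $\eta_k$ and $\eta_{k+1}$; your direct reduction also shows that the paper's detour through Lemma~\ref{lem:cusproperty}(II) is unnecessary.
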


Proof: If $g(k)$ is not strictly monotone, then $\xi$ and $\xi+(m,n)$ have intersections. Since both curves are ECCUS associated with $(p',v')$,  due to (2) in Lemma \ref {lem:cusproperty}, either $\xi(t+t_0)=\xi(t)+(m,n)$ for some fixed $t_0\in \Rset$ and all $t\in \Rset$ or 
$$
\xi(R)\cap (\xi(R)+(m,n))\cap \Zset^2\not=\emptyset. 
$$
For either case, there exits $t_1,t_2\in \Rset$ such that $\xi(t_2)-\xi(t_1)=(m,n)$.   Consequently,   our conclusion follows from (2) of Lemma \ref{lem:direction}. \qed

Suppose that $p_0\in \partial F_0$ and let 
\[
q_0=\frac{(m,n)}{|(m,n)|},
\]
where $(m,n)$ is a primitive integer vector. 

\medskip

{\bf Assumption 1:}  Either $n_{p_0}=\{q_0\}$, or else $q_0$ on the counterclockwise  end of $n_{p_0}$ (that is, 
\(\det(q_1,q_0)>0\) for every other $q_1\in n_{p_0}$ with $q_1\neq q_0$).

\medskip

{\bf Assumption 2: } No line segment lies on the counterclockwise direction:
$$
\partial F_0\cap \{p_0+tq_{0}^{\perp}|\ t>0\}=\emptyset. 
$$
Denote by $\mathcal{P}_0$ as follows 
\begin{equation}\label{eq:P_0}
\mathcal{P}_0
:= \left\{ \gamma \;\middle|\;
\begin{array}{l}
\gamma \text{ is a periodic (mod } \mathbb{Z}^2\text{) ECCU associated with } p_0,\\
\text{whose first  homology class is } (m,n)\in \mathbb{Z}^2
\end{array}
\right\}.
\end{equation}
 Thanks to Corollary~\ref{cor:aubrynonempty},  $\mathcal{P}_0\not=\emptyset$. 

\begin{defin}\label{def:gap} Assume that $\xi_0, \xi_1\in \mathcal{P}_0$ and $\xi_0(\Rset)\cap \xi_1(\Rset)=\emptyset$.  The open strip $U$ bounded by $\xi_0$ and $\xi_1$ is called a gap if  there does not exist $\xi\in \mathcal{P}_0$ satisfying $\xi(\Rset)\subset \overline{U}$ and $\xi(\Rset)\cap {U}\not=\emptyset$. 
    
\end{defin}

Note  that $\xi_0(\Rset)$ separates $\Rset^2$ into two open sets $\Omega_+$ and $\Omega_-$, and there exists $M_0>0$ such that
\be\label{eq:twodomainsep}
\{x\in\Rset^2:\ (-n,m)\cdot x\ge M_0\}\subset \Omega_+,
\
\{x\in\Rset^2:\ (-n,m)\cdot x\le -M_0\}\subset \Omega_-.
\ee
Without loss of generality, we assume the following. 
\medskip

{\bf Assumption 3:}  $\xi_1(\Rset)\subset \Omega_+$.

\medskip
Hence $U\subset \Omega_+$.  Therefore $\xi_0(\Rset)$ and $\xi_1(\Rset)$ separate $\Rset^2$ into three disjoint domains $\Omega_{++}$, $U$ and $\Omega_-$. See Figure \ref{f6} below. 
\medskip

\begin{center}
\includegraphics[scale=0.7]{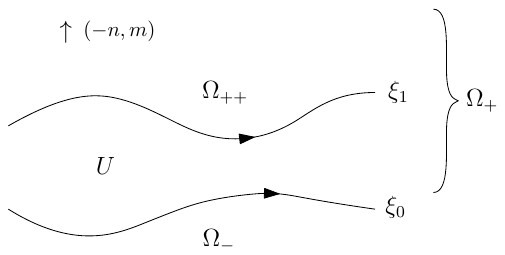}
\captionof{figure}{}
 \label{f6}
\end{center}

Below we  establish an approximation lemma.

\begin{lem}\label{lem:approxgap} Let $\xi_0, \xi_1\in \mathcal{P}_0$. Suppose the open strip $U$ bounded by $\xi_0$ and $\xi_1$ is a gap. Given Assumptions 1,2 and 3,   there exists an ECCUS $\gamma_+$   associated with $p_0$,  such that $\gamma_+(0)\in U$, $\gamma_{+}(\Rset)\in \overline U$  and
$$
\begin{array}{ll}
(a)&\lim_{s\to +\infty}d(\gamma_+(s), \xi_1([\alpha_+(s),\infty)))=0\\[3mm]
(b) &\lim_{s\to -\infty}d(\gamma_+(s), \xi_0((-\infty,\alpha_-(s)]))=0\\[3mm]
\end{array}
$$
Here, \(\alpha_\pm\)  are continuous strictly increasing functions $\Rset\to \Rset$ such that
\[
\lim_{s \to +\infty} \alpha_+(s)= +\infty, \quad \text{and} \quad \lim_{s \to -\infty} \alpha_-(s) = -\infty.
\]

Moreover, if $\partial F_0$ is differentiable at $p_0$, then $\gamma_+(\Rset)\cap U$  is unbounded.

\end{lem}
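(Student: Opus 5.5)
The plan is to follow the approximation step sketched in Step~I of Section~\ref{section:sketch-of-proof}. We approach $p_0$ from the counterclockwise side and pass to a limit of ECCUS. By Assumption~2, we can pick points $p_m\in\partial F_0$, $p_m\to p_0$, lying strictly counterclockwise of $p_0$ and not on the ray $\{p_0+tq_0^\perp\}$. By Assumption~1, every outer normal in $n_{p_m}$ is then strictly counterclockwise of $q_0$, and tends to $q_0$ as $m\to\infty$. For each $m$, Corollary~\ref{cor:aubrynonempty} (or an approximation from $F_{1/l}$ as in its proof) gives an ECCUS $\gamma_m$ associated with $(p_m,v_m)$.

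First I would show that $\gamma_m$ crosses the strip. By Lemma~\ref{lem:direction}(1), the asymptotic directions of $\gamma_m$ as $s\to\pm\infty$ lie in $\pm n_{p_m}$, which are not parallel to $(m,n)$. Hence $\gamma_m$ eventually enters $\Omega_{++}$ at one end and $\Omega_-$ at the other, using \eqref{eq:twodomainsep}. In particular $\gamma_m$ meets $U$. The counterclockwise tilt of $n_{p_m}$ fixes the orientation: $\gamma_m$ goes from the $\xi_0$ side to the $\xi_1$ side with a drift in the $+(m,n)$ direction.

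Next I would translate and pass to the limit. Choose a point of $\gamma_m$ in $U$, shift time so that $\gamma_m(0)\in \overline U$, and use $\Zset^2$-translations along $(m,n)$ (which preserve $U$) to keep $\gamma_m(0)$ in a compact fundamental piece of $\overline U$. I would arrange $\gamma_m(0)$ to sit at a definite distance from $\xi_0\cup\xi_1$, for instance at the midpoint in the $(-n,m)$-coordinate of the first crossing. The $\gamma_m$ are 1-Lipschitz, so Arzel\`a--Ascoli gives a subsequential limit $\gamma_+$, and $v_m\to v$ along a further subsequence. By Lemma~\ref{lem:cusstability}, $\gamma_+$ is an ECCUS associated with $(p_0,v)$, and $\gamma_+(0)\in U$.

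The step I expect to be the main obstacle is showing that $\gamma_+(\Rset)\subset\overline U$, together with the asymptotics (a) and (b). For containment I would use Lemma~\ref{lem:twopointintersection} and the gluing argument in its proof. If $\gamma_m$ met $\xi_1$ twice, the intersections would have to be integer points whose difference is parallel to $(m,n)$. That would force $(m,n)/|(m,n)|\in n_{p_m}$, which is excluded for $p_m\ne p_0$. So each $\gamma_m$ crosses $\xi_0$ and $\xi_1$ essentially once, and the piece of $\gamma_m$ inside $U$ is a single arc. In the limit, $\gamma_+$ stays in $\overline U$, since any exit would give a transversal crossing that persists for large $m$. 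Now $\gamma_+$ lies in $\overline U$ and cannot be a member of $\mathcal P_0$ passing through $U$, by the gap property in Definition~\ref{def:gap}. Its forward end therefore has to accumulate on a boundary curve: the strictly increasing $u$ from Lemma~\ref{lem:cusproperty}(III) forces $|\gamma_+(s)|\to\infty$ inside the strip. The orientation of the crossing established above identifies $\xi_1$ as the forward limit and $\xi_0$ as the backward limit. The reparametrizations $\alpha_\pm$ come from projecting onto the boundary curves in the $(m,n)$-direction, which is monotone because both boundary curves are periodic with class $(m,n)$.

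Finally, for the differentiable case, I would argue by contradiction. Suppose $\gamma_+(\Rset)\cap U$ were bounded. Then $\gamma_+$ would agree with $\xi_1$ and $\xi_0$ outside a compact set, so its two ends would run along $\xi_1$ and $\xi_0$. I would try to produce, by gluing at the integer points where $\gamma_+$ meets $\xi_0$ and $\xi_1$, a homoclinic orbit associated with $p_0$ whose class is not parallel to $(m,n)$. By Lemma~\ref{lem:G-property}(2), such a class would have to lie in $n_{p_0}=\{q_0\}$, a contradiction.
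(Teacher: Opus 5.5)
Your setup matches the paper's: approaching $p_0$ from the counterclockwise side, single crossings of $\xi_0$ and $\xi_1$ by the approximants via Lemma~\ref{lem:twopointintersection}, normalizing $\gamma_m(0)$, and stability. Your treatment of the differentiable case is also essentially the paper's. But the two steps you flag as the main obstacle are not established.

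\textbf{Containment in $\overline U$.} The approximants $\gamma_m$ themselves leave $\overline U$: you showed they cross $\xi_0$ into $\Omega_-$ and $\xi_1$ into $\Omega_{++}$. So when the crossing times stay bounded, the limit meets $\xi_1$ at a finite time and may continue into $\Omega_{++}$. Your remark that ``any exit would persist for large $m$'' therefore gives nothing. The limit only satisfies $\xi_+((-\infty,0])\cap\Omega_{++}=\xi_+([0,\infty))\cap\Omega_-=\emptyset$. The paper handles this by surgery. Let $t_+$ be the first time after $0$ at which the limit meets $\xi_1$, and $t_-$ the last time before $0$ at which it meets $\xi_0$. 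The paper replaces the limit by $\xi_1$ after $t_+$ and by $\xi_0$ before $t_-$. The result is still an ECCUS because $\xi_0,\xi_1$ are periodic, hence lie in $\mathcal{A}_{p_0}$ and are calibrated by every subsolution.

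\textbf{Asymptotics (a) and (b).} This is the more serious gap. The inference ``$\gamma_+$ is not in $\mathcal{P}_0$, therefore its forward end accumulates on a boundary curve'' is a non sequitur. The gap condition only excludes \emph{periodic} curves of class $(m,n)$ in $\overline U$ that meet $U$. If $\gamma_+$ stays away from $\partial U$, a limit of translates $\gamma_+(\cdot+s_k)-k(m,n)$ is just some ECCUS in $\overline U$, which contradicts nothing.

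The missing idea is the circle-map monotonicity the paper sets up:
\begin{itemize}
\item Take a periodic ECCUS $\eta$ for a nearby $\widetilde p$, transverse to $(m,n)$, with $n_{\widetilde p}$ disjoint from $\pm n_{p_0}$ and $\pm n_{p_l}$. Then every curve involved crosses each $\eta_k=\eta+k(m,n)$ exactly once.
\item The crossing coordinate $g_{\xi_l}(k)$ is strictly monotone by Lemma~\ref{lem:mon}, with the same monotonicity as the crossing times (Claim~1).
\item Hence in the limit $g_{\xi_+}$ is non-decreasing (Claim~2). This uses that the forward direction is $+q_0$, by Lemma~\ref{lem:direction}(1).
\end{itemize}
Only because $g_{\xi_+}(k)$ then converges monotonically to some $c'\in[c_0,c_1]$ can one extract a limiting arc from $\eta(c')$ to $\eta(c')+(m,n)$. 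Its periodic extension lies in $\mathcal{P}_0$, which excludes $c'\in(c_0,c_1)$ and also $g_{\xi_+}\equiv c_0$. The latter is the forward end hugging $\xi_0$, which your ``orientation of the crossing'' does not rule out. For $c'=c_1$ the same device yields arc-by-arc convergence to $\xi_1$.

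The function $\alpha_+$ then comes from the strict monotonicity of $u$ along both curves. Projection in the $(m,n)$-direction, as you propose, need not be monotone along $\xi_1$.
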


We would like to mention that, unlike the case \( c > 0 \), the curves \( \gamma_{+} \) here may partially overlap with \( \xi_0 \) or \( \xi_1 \) when maximum points are present.

Before proving the lemma, we first choose a suitable $\eta$ to establish the curved coordinate system.  

Owing to {\bf Assumption 2} and $n_{p_0}\cap n_{-p_0}=\emptyset$,  we may choose  $\widetilde p\in \partial F_0$ that is close to $p_0$ from the clockwise direction such that 
\begin{itemize}
\item (1) \be\label{eq:emptyintersection}
n_{\widetilde{p}} \cap n_{p_0} =n_{\widetilde{p}} \cap n_{-p_0}= \emptyset;
\ee
\item (2) There exists $\widetilde q={(\widetilde m, \widetilde n)\over |(\widetilde m, \widetilde n)|}\in  n_{\widetilde{p}}$ for a primitive integer vector $(\widetilde m, \widetilde n)$ satisfying 
\be\label{eq:rightdirection}
(\widetilde m,\widetilde n)\cdot (-n,m)>0. 
\ee
\end{itemize}
Note that we do not choose $(\tilde m,\tilde n)=(-n,m)$ since ${(-n,m)\over |(-n,m)|}$ might be in $n_{-p_0}$.

Thanks to Corollary~\ref{cor:aubrynonempty}, there exists a periodic (modulo $\mathbb{Z}^2$)  ECCUS  $\eta$ in $\mathcal{A}_{\widetilde{p}}$ whose first homology class is $(\widetilde m,\widetilde n)$, which will be used  to establish curve coordinate (\ref{eq:curvecoordinate}) and the corresponding $g_\xi$ defined in (\ref{eq:gfunction}). 

Owing to Lemma~\ref{lem:twopointintersection} and (\ref{eq:emptyintersection}),  $\eta$ intersects $\xi_0$ and $\xi_1$ exactly once. Hence, by (\ref{eq:rightdirection}), there exists $c_0<c_1$ such that
\be\label{eq:etabound}
\begin{array}{ll}
&\eta((-\infty, c_0))\subset \Omega_-,\quad  \eta(c_0)\in \xi_0(\Rset), \quad \eta((c_0,c_1))\subset U\\[3mm]
&\eta(c_1)\in \xi_1(\Rset), \quad \eta((c_1,\infty))\subset \Omega_{++}.
\end{array}
\ee
Also, 
\be\label{eq:twoconstantg}
g_{\xi_0}(k)=c_0 \quad \mathrm{and} \quad g_{\xi_1}(k)=c_1 \quad \text{for all $k\in \Zset$}.
\ee

\medskip

\begin{center}
\includegraphics[scale=0.7]{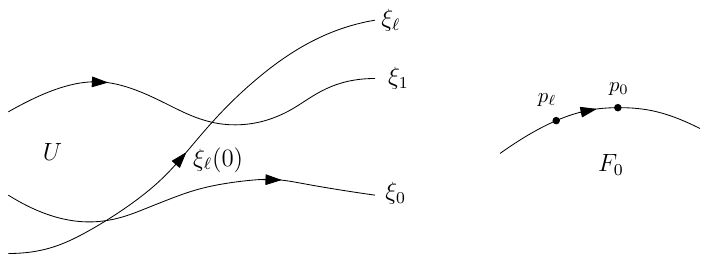}
\captionof{figure}{}
 \label{f7}
\end{center}

{\bf Proof of  Lemma \ref{lem:approxgap}}: By {\bf Assumption 1} and {\bf Assumption 2}, together with (\ref{eq:emptyintersection}),  we could choose a sequence $\{p_l\}_{l\geq 2}$ and a sequence of primitive integer vectors $\{(m_l,n_l)\}_{l\geq 2}$ such that $p_l$ approaces $p_0$ from the clockwise direction as $l\to \infty$, $q_l={(m_l,n_l)\over |(m_l,n_l)|}\in n_{p_l}$,
\be\label{eq:allempty}
\pm \frac{(m,n)}{|(m,n)|}\notin n_{p_l}  \quad \mathrm{and} \quad n_{\widetilde p}\cap n_{p_l}= n_{\widetilde p}\cap n_{-p_l}=\emptyset.
\ee
and
\be\label{eq:clockwisepositive}
\lim_{l\to \infty}q_l=q_0 \quad \mathrm{and}  \quad (m_l,n_l)\cdot (-n,m)>0
\ee
By Corollary \ref{cor:aubrynonempty},  for each $l\geq 2$, choose $\xi_l$ to be a periodic (modulo $\Zset^2$)  ECCUS on $\mathcal{A}_{p_l}$ with first homology class $(m_l,n_l)\in \Zset^2$. See Figure \ref{f7} above. Without loss of generality, we may assume that for $l\geq 2$
\be\label{eq:0condition}
|\xi_l(0)|\leq 4,\quad \xi_l(0)\in U \quad  \mathrm{and}\ d(\xi_l(0),\xi_0)={d(\xi_0,\xi_1)\over 2}.
\ee

Owing to Lemma~\ref{lem:twopointintersection} and (\ref{eq:allempty}), the curve \( \xi_l \)  intersect   \( \xi_0 \) or \( \xi_1 \) or $\eta_k$ exactly once for all $l\in \Nset$ and $k\in \Zset$. Since $(m_l,n_l)\cdot (-n,m)>0$ for each $l\geq 2$, there exist $t_{0l}<0<t_{1k}$ such that for all $l\geq 2$
\be\label{eq:xilrange}
\begin{array}{ll}
&\xi_l((-\infty, t_{0l}))\subset \Omega_-,\quad  \xi_l(t_{0l})\in \xi_0(\Rset), \quad \xi_l((t_{0l},t_{1l})\subset U\\[3mm]
&\quad \xi_l(t_{1l})\in \xi_1(\Rset), \quad \xi_l((t_{1l},\infty))\subset \Omega_{++}.
\end{array}
\ee
In addition,  thanks to Lemma \ref{lem:mon},  the function $g_l(k)=g_{\xi_l}(k):\Zset\to \Rset$  must be strictly monotonic.

Fix  $l\geq 2$. For $k\in \Zset$, let $s_{k,l}\in \Rset$ such that
$$
\xi_l(s_{k,l})=\eta_k(g_{\xi_l}(k))=\eta(g_{\xi_l}(k))+k(m,n). 
$$
Then $|s_{k,l}-s_{k+1,l}|\geq d(\eta, \eta_1)>0$. Clearly, $s_{k,l}$ is strictly monotonic with respect to $k$. Otherwise $\xi_l$ will intersect certain $\eta_i$ more than once.

\medskip

\noindent{\bf Claim 1.} $s_{k,l}$ and $g_{\xi_l}(k)$ have the same monotonicity.  

Indeed, we have either $\{s_{k,l}\}$ is strictly increasing with $\lim_{k\to\infty}s_{k,l}=\infty$, or strictly decreasing with $\lim_{k\to\infty}s_{k,l}=-\infty$.  
In the first case, when $k$ is sufficiently large, due to (\ref{eq:clockwisepositive}),  $\xi_l(s_{k,l})\in \Omega_+$ and hence $g_{\xi_l}(k)>c_1$, while for $k$ sufficiently negative, $\xi_l(s_{k,l})\in \Omega_-$ and thus $g_{\xi_l}(k)<c_0$. Therefore $g_{\xi_l}(k)$ is strictly increasing.  
The argument in the decreasing case is analogous.  
Hence the claim follows.

 Upon a subsequence if necessary, we may assume that 
$$
\lim_{l\to \infty}\xi_l=\xi_+.
$$
Then $\xi_+$ is an ECCUS associated with $(p_0,v_0)$ for some viscosity subsolution $v_0$ of (\ref{mech-cell}).  Also, by (\ref{eq:xilrange}), we have that
\be\label{eq:bothempty}
\xi_+((-\infty,0])\cap \Omega_{++}=\xi_+([0,\infty))\cap \Omega_-=\emptyset.
\ee
 In addition, $\xi_+(0)$ satisfies (\ref{eq:0condition}). 
Let 
\be\label{eq:two-t-difference}
t_+=\inf\{t>0|\ \xi_+(t)\in \xi_1(\Rset)\} \quad \mathrm{and} \quad t_-=\sup\{t<0|\ \xi_+(t)\in \xi_0(\Rset)\}.
\ee

Due to (\ref{eq:bothempty}), $\xi_+((t_-,t_+))\subset \overline U$.

\medskip

We discuss two  possible situations. See Figure \ref{f8} below. 

{\bf (I)} $t_+<+\infty$ and $t_->-\infty$. Then $\xi_+(t_+)\in \xi_1(\Rset)\cap \Zset^2$ and $\xi_+(t_-)\in \xi_0(\Rset)\cap \Zset^2$.  We could construct
$$
\gamma_+(t)=
\begin{cases}
\xi_1(t-t_++t_1) \quad \text{$t\geq  t_+$}\\
\xi_+(t) \quad \text{for $t\in [t_-,t_+]$}\\
\xi_0(t-t_-+t_0) \quad \text{ for $t\leq t_-$}.
\end{cases} 
$$
Here $\xi_1(t_1)=\xi_+(t_+)$ and $\xi_0(t_0)=\xi_+(t_-)$. Clearly, $\gamma_+$ is also an ECCUS associated with $(p_0,v_0)$.

\medskip

{\bf (II)} $t_++|t_-|=+\infty$. Without loss of generality, let us assume that $t_+=+\infty$. By (III) of Lemma (\ref{lem:cusproperty}), combining with $\gamma_+((t_-,\infty))\subset \overline U$ and (1) of Lemma \ref{lem:direction},  we deduce that 
\be\label{eq:xiplusdirection}
\lim_{t\to \infty}|\gamma_+(t)|=\infty, \quad  \lim_{t\to \infty}{\gamma_+(t)\over |\gamma_+(t)|}=q_0. 
\ee

\begin{center}
\includegraphics[scale=0.5]{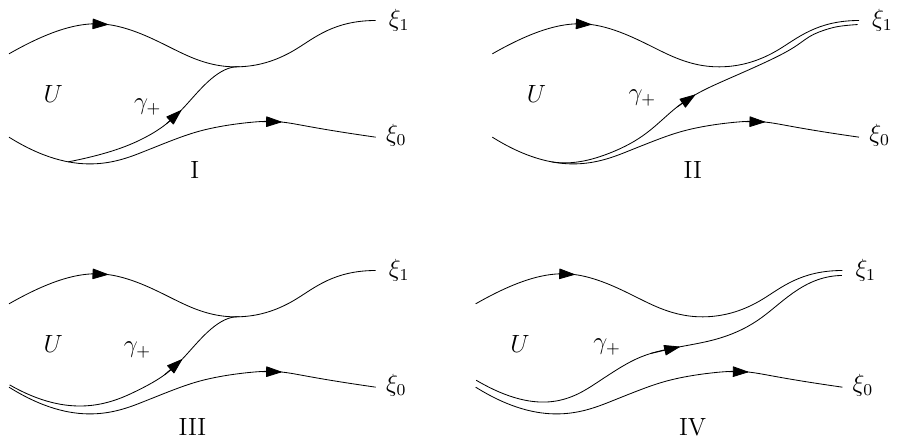}
\captionof{figure}{}
 \label{f8}
\end{center}

Also, by Lemma~\ref{lem:twopointintersection}, (\ref{eq:allempty}), and part (1) of Lemma~\ref{lem:direction}, 
$\xi_+$ intersects each $\eta_k$ exactly once. 
Hence the associated function 
\[
g_{\xi_+}(k)=\lim_{l\to\infty} g_{\xi_l}(k)
\]
is monotone with respect to $k$. For each $k\in\Zset$, let $s_k$ be defined by
\be\label{eq:xiplusintersection}
\xi_+(s_k)=\eta_k(g_{\xi_+}(k))=\eta(g_{\xi_+}(k))+k(m,n).
\ee
Then $\lim_{|k|\to \infty}|s_k|=\infty$ and  $s_k$ is strictly monotone with respect to $k$. Clearly, $s_k=\lim_{l\to\infty} s_{k,l}$. Consequently, by {\bf Claim~1}, if $\{s_k\}$ is strictly increasing (respectively, strictly decreasing), 
then the function $g_{\xi_+}(k)$ is non-decreasing (respectively, non-increasing) in $k$.

\noindent{\bf Claim 2.} $g_{\xi_+}(k):\Zset\to \Rset$ is non-decreasing.

If not, then by the above discussion we have $\lim_{k\to -\infty}s_k=\infty$. 
Since $\xi_+((t_-,\infty))\subset \overline U$, it follows that for $k$ sufficiently negative, 
$\xi_+(s_k)\in \overline U$. So $c_0 \leq g_{\xi_+}(k)\leq c_1$ for all such $k$, where $c_0$ and $c_1$ are the constants from 
(\ref{eq:etabound}) and (\ref{eq:twoconstantg}). 
Therefore, $\eta(g_{\xi_+}(k))$ is uniformly bounded for for $k$ sufficiently negative.  This contradicts (\ref{eq:xiplusdirection}) and (\ref{eq:xiplusintersection}). Hence $s_k$ is strictly increasing with respect to $k$ and our claim holds. 

If $t_->-\infty$, like (I),  we could replace $\xi_+$ by $\xi_0$ for $t\leq t_-$.  Hence, we may just assume that $\xi_+(\Rset)\in \overline U$ and take $\gamma_+=\xi_+$. Apparently, the corresponding $g_{\xi_+}$ is still non-decreasing.  Now we will establish the existence of the strictly increasing function $\alpha_+:\Rset\to \Rset$. 

 Since $g_{\xi_+}$ is non-decreasing, there are three cases:

 \medskip

 \textbf{Case 1:} Suppose \(\lim_{k \to +\infty} g_{\xi_+}(k) = c_1\). Then, as \(k \to \infty\), the segment of \(\xi_+\) between \(\eta_k\) and \(\eta_{k+1}\) must converge to the corresponding segment of \(\xi_1\) between \(\eta_k\) and \(\eta_{k+1} = \eta_k + (m,n)\). Otherwise, the approximation would yield a limiting segment \(\gamma_{\infty}\) of an ECCUS such that \(\gamma_{\infty}(0) \in U\) , $\gamma_\infty([s_1,s_2])\in \overline U$, $\gamma_\infty(s_1)$, $\gamma_\infty(s_2)\in \xi_1(\Rset)$ and
\[
\gamma_{\infty}(s_2) - \gamma_{\infty}(s_1) = (m,n)
\quad \text{for some } s_2 > 0 > s_1.
\]
The periodic extension of such a \(\gamma_\infty\) would then produce a curve in $ \mathcal{P_0}$ that  contradicts the assumption that \(U\) is a gap.

 Moreover, for any viscosity solution \(v\) of (\ref{mech-cell}), the function \(u(x) = p_0 \cdot x + v(x)\) is strictly increasing along both segments of $\xi_+$ and $\xi_1$. This implies the existence of a strictly increasing function \(\alpha_+\).

\medskip

{\bf Case 2:} Suppose \(\lim_{k \to +\infty} g_{\xi_+}(k) = c'\) for some \(c' \in (c_0, c_1)\). This leads to a contradiction, as in {\bf Case 1}: the limiting behavior would produce a curve in  $\mathcal{P}_0$ (\ref{eq:P_0})  that violates the assumption that \(U\) is a gap.

\medskip 

{\bf Case 3:}  Suppose \(g_{\xi_+}(k) = c_0\) for all \(k \in \mathbb{Z}\). Then, as before, there exist \(s_2 > 0 > s_1\) such that \(\xi_+(s_1), \xi_+(s_2) \in \xi_0(\mathbb{R})\) and \(\xi_+(s_2) - \xi_+(s_1) = (m,n)\). As in the previous cases, the periodic extension of such a segment would yield a periodic ECCUS associated with $p_0$ that contradicts the assumption that \(U\) is a gap. Therefore, this case cannot occur either.

Similarly, we can establish the existence of the strictly increasing function \(\alpha_-\).

If \(\partial F_0\) is differentiable at \(p_0\), then \textbf{Situation (I)}—where both \(t_+\) and \(t_-\) are finite—cannot occur. Indeed, by Lemma \ref{lem:direction}, the direction vector 
\[
\frac{\xi_1(t_+) - \xi_0(t_-)}{|\xi_1(t_+) - \xi_0(t_-)|}
\]
must belong to \(n_{p_0}\), which differs from \(\frac{(m,n)}{|(m,n)|}\). This implies that \(n_{p_0}\) contains more than one element, contradicting the differentiability of \(\partial F_0\) at \(p_0\). Therefore, only \textbf{Situation (II)} can occur, and consequently, \(\xi_+(\mathbb{R}) \cap U\) must be infinite.

\medskip

Finally,  $\xi_+(\Rset)\subset \mathcal{A}_{p_0}$ by the upper continuity of Aubry sets in two dimensions (See Remark \ref{rmk:upperaubry}).

 \qed

By  constructing  \(\gamma_-\) via approximating \(p_0\) counterclockwise along \(\partial F_0\), we immediately  have the following corollary.

\begin{cor}\label{cor:twodirectionapproximation}Assume that $\xi_0, \xi_1\in \mathcal{P}_0$ and $\xi_0(\Rset)\cap \xi_1(\Rset)=\emptyset$.  Suppose that  $p_0$ is a nonlinear point and $\partial F_0$ is differentiable at $p_0$. Then there exists two ECCUS associated with $p_0$, $\gamma_+$ and $\gamma_-$ on $\mathcal{A}_p$,  such that $\gamma_{\pm}(\Rset)\in \overline U$ 
$$
\begin{array}{ll}
(a)&\lim_{s\to +\infty}d(\gamma_+(s), \xi_1([\alpha_+(s),\infty)))=0\\[3mm]
(b) &\lim_{s\to -\infty}d(\gamma_+(s), \xi_0((-\infty,\alpha_-(s)]))=0\\[3mm]
(c) &\lim_{s\to +\infty}d(\gamma_-(s), \xi_0([\beta_+(s),\infty)))=0\\[3mm]
(d) &\lim_{s\to -\infty}d(\gamma_-(s), \xi_1((-\infty,\beta_-(s)]))=0\\[3mm]
\end{array}
$$
Here, \(\alpha_\pm\) and \(\beta_\pm\) are strictly increasing functions $\Rset\to \Rset$ such that
\[
\lim_{s \to +\infty} \alpha_+(s) = \lim_{s \to +\infty} \beta_+(s) = +\infty, \quad \text{and} \quad \lim_{s \to -\infty} \alpha_-(s) = \lim_{s \to -\infty} \beta_-(s) = -\infty.
\]

Moreover, if $\partial F_0$ is differentiable at $p_0$, then both $\gamma_+(\Rset)\cap U$ and $\gamma_-(\Rset)\cap U$ are unbounded.

\end{cor}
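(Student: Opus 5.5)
The plan is to obtain $\gamma_+$ directly from Lemma~\ref{lem:approxgap} and $\gamma_-$ from the mirror-image version of that lemma, in which $p_0$ is approached counterclockwise rather than clockwise. First we check the hypotheses of Lemma~\ref{lem:approxgap}. Since $\partial F_0$ is differentiable at $p_0$, we have $n_{p_0}=\{q_0\}$, so Assumption~1 holds trivially, as does its counterclockwise analogue. Since $p_0$ is a nonlinear point, there is no segment of $\partial F_0$ emanating from $p_0$ in either the direction $q_0^\perp$ or $-q_0^\perp$. Hence Assumption~2 and its clockwise mirror both hold. After relabeling $\xi_0,\xi_1$ we may take Assumption~3. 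As in the setting of Lemma~\ref{lem:approxgap}, $U$ is understood to be a gap.

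With these hypotheses in place, Lemma~\ref{lem:approxgap} yields $\gamma_+$ satisfying (a) and (b), with $\gamma_+(\Rset)\subset\overline U$. Its proof also gives $\gamma_+(\Rset)\subset\mathcal A_{p_0}$, via the upper semicontinuity of Remark~\ref{rmk:upperaubry}. Since $\partial F_0$ is differentiable at $p_0$, the final part of that lemma shows $\gamma_+(\Rset)\cap U$ is unbounded.

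For $\gamma_-$ we repeat the proof of Lemma~\ref{lem:approxgap} with the orientation reversed. We take $p_l\to p_0$ counterclockwise along $\partial F_0$, with primitive normals $(m_l,n_l)$ satisfying $(m_l,n_l)\cdot(-n,m)<0$ and $q_l\to q_0$. These exist because $p_0$ is nonlinear, so there is no flat edge on that side, and differentiability forces $n_{p_l}\to\{q_0\}$. The auxiliary point $\widetilde p$ is chosen on the counterclockwise side in the same way, so that (\ref{eq:emptyintersection}) holds. The periodic ECCUS $\xi_l\subset\mathcal A_{p_l}$ now cross from $\Omega_{++}$ into $U$ and then into $\Omega_-$, which is the reverse of (\ref{eq:xilrange}). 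Consequently the limit $\gamma_-$ shadows $\xi_1$ as $s\to-\infty$ and $\xi_0$ as $s\to+\infty$. Claim~1 and Claim~2 carry over with the monotonicity direction of $g_{\xi}$ adjusted, and the three-case analysis of $\lim g_{\gamma_-}(k)$ uses the gap property in the same way. Together these give the strictly increasing $\beta_\pm$ and properties (c) and (d).

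The step that needs the most care is confirming that the counterclockwise version of the sequence $\{p_l\}$ satisfies (\ref{eq:allempty}) and the sign condition. Without these, Lemma~\ref{lem:twopointintersection} cannot guarantee that $\xi_l$ meets $\xi_0$, $\xi_1$ and each $\eta_k$ exactly once, and that uniqueness of intersections is what drives the rest of the argument. The sign condition follows from $q_l\to q_0$ together with $\det(q_l,q_0)$ having the opposite sign. For (\ref{eq:allempty}), we use that $n_{p_l}$ shrinks toward $q_0$ while $n_{\widetilde p}$ stays separated from both $q_0$ and $-q_0$. The remaining conclusions, that $\gamma_-(\Rset)\subset\mathcal A_{p_0}$ and that $\gamma_-(\Rset)\cap U$ is unbounded, follow exactly as in Lemma~\ref{lem:approxgap}. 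In particular, Situation (I) is excluded because it would place a second direction in $n_{p_0}$.
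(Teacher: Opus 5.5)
Your proposal is correct and follows the paper's route. The paper obtains $\gamma_+$ directly from Lemma~\ref{lem:approxgap} and says only that $\gamma_-$ comes from approximating $p_0$ counterclockwise along $\partial F_0$; your write-up spells out that mirrored construction, including checking that nonlinearity and differentiability at $p_0$ give Assumptions 1--2 on both sides and that $U$ is implicitly assumed to be a gap.
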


Now we are ready to prove Theorem \ref{main}.

\subsection{Proof of the ``Only if" part}.

\begin{defin}
A point \(p\in \partial F_0\) is called a \emph{special point} if it is a nonlinear point and each of the following curves
\[
\gamma_{b1}(t),\quad \gamma_{b1}(-t),\quad \gamma_{b2}(t),\quad \gamma_{b2}(-t)
\]
is a homoclinic orbit that lies  in either \(\mathcal{A}_{p}\) or \(\mathcal{A}_{-p}\).
\end{defin}

\begin{lem}\label{lem:uniquespecial}
There is at most one special point up to sign. Moreover, if a special point \(p\) exists, then \(\partial F_0\) is differentiable at \(p\), and the homology classes of \(\gamma_{b1}(t)\) and \(\gamma_{b2}(t)\) are either equal or opposite.
\end{lem}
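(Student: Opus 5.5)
Let $p$ be a special point and write $w_i\in\Zset^2$ for the first homology class of the homoclinic orbit $\gamma_{bi}(t)$, $i=1,2$. The curve $\gamma_{bi}(-t)$ is homoclinic with class $-w_i$. Reversing time sends $\mathcal A_{-p}$ to $\mathcal A_p$: if $\gamma$ is calibrated for $-p\cdot x-v$, then $\gamma(-\cdot)$ is calibrated for $p\cdot x+v$. So, replacing $\gamma_{bi}(t)$ by $\gamma_{bi}(-t)$ where necessary, for each $i$ one of $\pm w_i$ lies in $G_p$. All four curves are homoclinic and approach $O$ tangentially to $\pm v_b$, with $\gamma_{b1}$ and $\gamma_{b2}$ coming in from opposite sides.

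\emph{Differentiability and the homology relation.} I argue by contradiction, using Lemma~\ref{lem:G-property} to get some $\epsilon_1w_1,\epsilon_2w_2\in G_p$. By part (2), $\epsilon_iw_i/|w_i|\in n_p$. Suppose $n_p$ contains two distinct unit vectors, or $w_1,w_2$ are not parallel. The main geometric input is that both homoclinic orbits leave or enter $O$ tangentially along the same line $\Rset v_b$. Form the periodic extensions (\ref{eq:extension}) of the two orbits and glue them at $O$ as in the proof of Lemma~\ref{lem:twopointintersection}. For instance, follow $\gamma_{b1}$ into $O$ and then exit along the time-reversed branch. This gives an ECCUS in $\mathcal A_p$ that passes through $O$ with a straight tangent (tangent $v_b$ on both sides).

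Now use the local analysis near the maximum, namely the forthcoming approximation result (Theorem~\ref{theo:no-flat-fill}). An orbit of $\mathcal A_p$ entering along the strong eigendirection $v_b$ is exceptional: generically, minimizers enter $O$ along the weak direction $v_a$. I would show that the glued curve at $O$ can be perturbed to strictly lower action unless the incoming and outgoing branches are exact time reversals in homology. Any two non-parallel classes in $\pm G_p$ would allow a corner-cutting at $O$ near the $v_b$ axis that lowers the action. This contradicts calibration by $u=p\cdot x+v$, so $w_2=\pm w_1$. The integer vectors in $n_p$ are then just $\pm w_1/|w_1|$ up to sign, and $n_p\cap n_{-p}=\emptyset$ forces $n_p$ to be contained in one direction.

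It remains to exclude a nontrivial normal cone when $n_p$ contains only one rational direction. I use Lemma~\ref{lem:direction}: for a non-differentiable point, the construction behind Corollary~\ref{cor:aubrynonempty} produces periodic ECCUS in $\mathcal A_p$ for every primitive direction in the interior of $n_p$. Such an ECCUS would have to cross the $v_b$-homoclinic net near $O$. By Lemma~\ref{lem:twopointintersection}, double intersections happen only at integer points. The crossing then creates a second class in $G_p$ not parallel to $w_1$, which is impossible by the previous step.

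\emph{Uniqueness up to sign.} Let $p,p'$ be special with $p'\neq\pm p$. The four $v_b$-homoclinics $\gamma_{b1}(\pm t),\gamma_{b2}(\pm t)$ are uniquely determined by $V$ (Lemma~\ref{lem:vunique}). Hence both $p$ and $p'$ have $\pm w_1$ as homology classes of the same curves. By Lemma~\ref{lem:G-property}(2) and differentiability, $n_p=n_{p'}=\{\pm w_1/|w_1|\}$ with a matching sign pattern. Moreover, the action identity $\int(\tfrac12|\dot\gamma|^2-V)=p\cdot w_1=p'\cdot w_1$ holds along the same curve. So $p$ and $p'$ lie on a common supporting line of $F_0$, and convexity gives the segment $[p,p']\subset\partial F_0$. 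This contradicts $p$ being a nonlinear point. If instead the signs differ, the same argument applies with $-p'$.

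\emph{Main obstacle.} I expect the hardest step to be the corner-cutting argument at $O$. It shows that homoclinics arriving along $v_b$ with non-parallel classes cannot coexist in $\mathcal A_p$, and it needs the fine local asymptotics of minimizers near the hyperbolic maximum (Section 5). I would first try to derive it from Theorem~\ref{theo:no-flat-fill} applied to approximating periodic orbits in $\mathcal M_{p_l}$.
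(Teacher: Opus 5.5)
Your uniqueness-up-to-sign argument is essentially the paper's. After replacing $p'$ by $-p'$ if necessary, a common homoclinic orbit of class $w$ gives $p\cdot w=p'\cdot w$. By Lemma~\ref{lem:G-property}(2) this is a common supporting line through $p$ and $p'$, hence a flat edge. That part stands on its own and does not need differentiability.

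The genuine gap is in the part on differentiability and the homology relation.

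\emph{Nonlinearity is never used.} Your argument there never uses that a special point is nonlinear, and no argument that ignores this hypothesis can work. Take Example~\ref{example:separable}, Case 1, with $h_1,h_2$ chosen so that Assumption (M) holds and $h_2''(0)<h_1''(0)$, so $v_b=e_2$. Let $p=(L_1,L_2)$.
\begin{itemize}
\item The four curves $\gamma_{b\cdot}(\pm t)$ run along the $x_2$-axis between consecutive integer points, each with action $L_2=p\cdot(0,1)$. So each is a homoclinic orbit lying in $\mathcal A_p$ or $\mathcal A_{-p}$.
\item Yet $G_p=\{(1,0),(0,1),(1,1)\}$ contains non-parallel classes.
\item $\partial F_0$ has a corner at $p$.
\end{itemize}

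\emph{What fails as a result.} Your claim that non-parallel classes in $\pm G_p$ allow an action-lowering corner-cutting at $O$ is false in the generality your last step needs. That last step asserts that a second class in $G_p$ not parallel to $w_1$ is impossible. Beyond this, the corner-cutting is only a heuristic, which you yourself flag as the main obstacle. Also, a normal cone with more than one element contains infinitely many rational directions. So your case in which $n_p$ contains only one rational direction is vacuous.

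\emph{The missing idea.} Use nonlinearity through Theorem~\ref{main2}.
\begin{itemize}
\item If $\partial F_0$ is not differentiable at $p$, it contains two flat edges issuing from $p$, so $p$ is a linear point. Hence a special point has $n_p=\{q_p\}$.
\item Lemma~\ref{lem:G-property}(1),(2) then force every element of $G_p$ to be the unique primitive integer vector $w$ in the direction $q_p$, so $G_p=\{w\}$.
\item $\gamma(t)$ is homoclinic for $-p$ if and only if $\gamma(-t)$ is homoclinic for $p$. Hence $G_{-p}=\{-w\}$.
\item Therefore the classes of $\gamma_{b1}(t)$ and $\gamma_{b2}(t)$ both lie in $\{w,-w\}$, so they are equal or opposite.
\end{itemize}
No local analysis near $O$ is needed for this lemma.
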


\begin{proof}
Suppose \(p\) and \(\tilde p\) are two special points. By the definition, after possibly replacing one of them by its negative, we may assume that \(\mathcal{A}_{p}\) and \(\mathcal{A}_{\tilde p}\) contain a common homoclinic orbit. By Lemma~\ref{lem:G-property}, this implies \(n_{p}\cap n_{\tilde p}\neq\emptyset\). Hence the line segment joining \(p\) and \(\tilde p\) is contained in \(\partial F_0\), which contradicts the fact that both \(p\) and \(\tilde p\) are nonlinear points. This proves the uniqueness up to sign.

The remaining assertions follow directly from Theorem~\ref{main2} together with Lemma~\ref{lem:G-property}.
\end{proof}

Accordingly,  combining with Theorem~\ref{main2}, to prove the ``only if" part, it is sufficient to prove the following lemma. 

\begin{lem}\label{lem:onlyifpart} Give $p_0\in \partial F_0$,
suppose \(n_{p_0} = \{q_0\}\) for some \(q_0 \in \mathbb{R} \mathbb{Z}^2\). If \(p_0\) is a nonlinear point, then it has to be a special point defined above. 
\end{lem}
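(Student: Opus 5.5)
Let $q_0=(m,n)/|(m,n)|$ with $(m,n)$ primitive. Since $n_{p_0}=\{q_0\}$, Assumption 1 holds trivially. Since $p_0$ is nonlinear, Assumption 2 holds, and so does its clockwise analogue. The plan has two stages: first show that $\mathcal{P}_0$ admits no gap, then show that the absence of gaps forces the four curves $\gamma_{b1}(\pm t),\gamma_{b2}(\pm t)$ to be homoclinic orbits in $\mathcal{A}_{p_0}\cup\mathcal{A}_{-p_0}$. This parallels Step I of Section~\ref{section:sketch-of-proof}.

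\textbf{Step 1 (no gaps).} Suppose $\xi_0,\xi_1\in\mathcal{P}_0$ bound a gap $U$, arranged as in Assumption 3. Corollary~\ref{cor:twodirectionapproximation} applies, because $p_0$ is nonlinear and $\partial F_0$ is differentiable there. It produces ECCUS $\gamma_+,\gamma_-\subset\overline U\cap\mathcal{A}_{p_0}$ with the following behavior: $\gamma_+$ is asymptotic to $\xi_0$ at $-\infty$ and to $\xi_1$ at $+\infty$, while $\gamma_-$ does the reverse. Both meet $U$ in unbounded sets. Planar topology in the strip $\overline U$ then forces $\gamma_+$ and $\gamma_-$ (or $\gamma_\pm$ and an integer translate $\gamma_\mp+k(m,n)$) to cross at least twice. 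Because both lie on $\mathcal{A}_{p_0}$, the graph property says orbits meeting at a non-integer point coincide. Hence all crossings must occur at integer points, as in Lemma~\ref{lem:twopointintersection}. Two distinct integer crossings $z_1,z_2$ would then give, via Lemma~\ref{lem:direction}(2), a direction $(z_2-z_1)/|z_2-z_1|\in n_{p_0}$. I then use the unboundedness of $\gamma_\pm\cap U$ to choose the crossings far apart along the strip but on different boundary sides. This makes the difference vector non-parallel to $(m,n)$, contradicting $n_{p_0}=\{q_0\}$. I expect the bookkeeping of crossings to require some care, but the conclusion should be that $\mathcal{P}_0$ has no gap.

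\textbf{Step 2 (no gaps $\Rightarrow$ accumulation at the origin).} With no gaps, the curves in $\mathcal{P}_0$ fill the plane, since their union is closed under limits by Lemma~\ref{lem:cusstability}. In particular, there are periodic ECCUS in $\mathcal{A}_{p_0}$ with homology $(m,n)$ passing arbitrarily close to $O$ without passing through it. Their pieces are genuine orbits in $\mathcal{A}_{p_0}$ that approach $O$, and also approach $\pm v_a$ or $\pm v_b$ directions on both sides. I will choose such a sequence approaching $O$ from both sides of the curves through $O$; from the side not containing them, the curves in $\mathcal{P}_0$ through $O$ pass it on either side.

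\textbf{Step 3 (main obstacle).} At this point I invoke the local approximation result of Section 5 (Theorem~\ref{theo:no-flat-fill} and Corollary~\ref{cor:uniqueaubry}). Minimizing orbits passing near the hyperbolic fixed point $O$ with $a<b$ enter along the weak direction $v_a$ and exit along $v_a$ generically. If they are to accumulate in a way that fills a neighborhood from both sides, the limiting configurations must use the strong stable and unstable manifolds $\gamma_{b1}(\pm t),\gamma_{b2}(\pm t)$ as limits of calibrated curves. Upper semicontinuity of the Aubry set (Remark~\ref{rmk:upperaubry}) then places these four curves in $\mathcal{A}_{p_0}$, or in $\mathcal{A}_{-p_0}$ after time reversal. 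Being in the Aubry set and tending to $\mathbb{Z}^2$ at both ends, by Corollary~\ref{cor:lower-gap}, they are homoclinic. Hence $p_0$ is special.

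The hard part is Step 3, namely verifying the hypotheses of the Section 5 theorem from the filling property. I would first try to show that the filling family contains orbits entering a small ball around $O$ on each of the four sectors cut out by the $v_a$ and $v_b$ axes, and then apply the theorem sector by sector.
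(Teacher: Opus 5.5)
Your overall architecture matches the paper's. The paper argues by contradiction: it uses Corollary~\ref{cor:uniqueaubry} to obtain a gap $U$ in $\mathcal{P}_0$ when $p_0$ is not special, and then destroys that gap with the curves $\gamma_\pm$ of Corollary~\ref{cor:twodirectionapproximation}. Your Steps 2--3 amount to citing Corollary~\ref{cor:uniqueaubry} contrapositively, which is fine. The heuristics you attach to them are not needed:
\begin{itemize}
\item that $\mathcal{P}_0$ ``fills the plane'' (this one is also unjustified, since curves of $\mathcal{P}_0$ may meet at integer points);
\item that upper semicontinuity delivers the strong manifolds;
\item that these tend to $\mathbb{Z}^2$ at both ends.
\end{itemize}

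The genuine gap is in Step 1, exactly where the contradiction is supposed to occur. Every crossing of $\gamma_+$ with $\gamma_-$ or with a translate $\gamma_-+k(m,n)$ lies on $\gamma_+$. Lemma~\ref{lem:direction}(2), together with $n_{p_0}=\{q_0\}$, forces any two integer points on one ECCUS associated with $p_0$ to differ by an element of $\mathbb{Z}(m,n)$. So you cannot choose two crossings whose difference is non-parallel to $(m,n)$. Taking them far apart only produces a large multiple of $(m,n)$, which is perfectly compatible with $n_{p_0}=\{q_0\}$.

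Your ``different sides'' idea does have a legitimate use. An integer point of $\gamma_+$ on $\xi_0(\mathbb{R})\cup\xi_1(\mathbb{R})$ and an interior integer crossing $v_1\in U$ would differ by a vector not parallel to $(m,n)$, so such a touching point is excluded. But that observation (the paper's Claim~1) only yields $\gamma_\pm(\mathbb{R})\subset U$; it is not yet a contradiction.

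What you are missing is a reason why $\gamma_+$ or $\gamma_-$ cannot contain two distinct integer points. The paper takes this from the gap property itself:
\begin{itemize}
\item Two integer points of $\gamma_+$ in $U$ differ by $k(m,n)$.
\item By part (II) of Lemma~\ref{lem:cusproperty} and primitivity, the arc between them consists of homoclinic orbits of class $(m,n)$.
\item The periodic extension of one of them is an element of $\mathcal{P}_0$ lying in $\overline U$ and meeting $U$, which contradicts that $U$ is a gap.
\end{itemize}
Hence $\gamma_+\cap\mathbb{Z}^2=\gamma_-\cap\mathbb{Z}^2=\{v_1\}$. Planar topology then forces $\gamma_-$ to meet $\gamma_++(m,n)$ as well, again at an integer point $v_2\in U$. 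If $v_2=v_1$, then $v_1$ and $v_1-(m,n)$ both lie on $\gamma_+$. If $v_2\neq v_1$, then $\gamma_-$ carries two integer points. Either way you get a contradiction.

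A counting variant also works:
\begin{itemize}
\item All integer points of $\gamma_\pm$ lie on $v_1+\mathbb{Z}(m,n)$, which stays at a fixed positive distance from $\xi_0\cup\xi_1$.
\item Since $\gamma_\pm$ approach $\xi_0\cup\xi_1$ at both ends, each $\gamma_\pm$ contains only finitely many integer points.
\item Yet $\gamma_+$ must meet $\gamma_-+k(m,n)$ at an integer point for every $k\in\mathbb{Z}$, which is impossible.
\end{itemize}
Without one of these ingredients your Step~1 does not close.
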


Proof: Assume that $q_0=\lambda_0(m,n)$ for $\lambda_0>0$ and $(m,n)$ is a primitive integer vector.  We argue by contradiction. If not, owing to Corollary \ref{cor:uniqueaubry}, there exists a gap $U$ bounded by two periodic (modulo $\Zset^2$) ECCUS  $\xi_0$ and $\xi_1$ on $\mathcal{A}_{p_0}$ with a primitive $(m,n)\in \Zset^2$ as the first homology class.

Let $\gamma_+$ and $\gamma_-$ be from Corollary \ref{cor:twodirectionapproximation}. Since $\gamma_{\pm}(\Rset)\cap U$ are unbounded,  using suitable translations along the $(m,n)$ or $-(m,n)$ directions  if needed,  we may assume that  $\gamma_+$ and $\gamma_-$ intersect in the interior of $U$.  Since both $\gamma_+$ and $\gamma_-$ are on $\mathcal{A}_{p_0}$,   at least one of the intersection points (denoted by $v_1\in U$) has to be an integer vector. Otherwise, $\gamma_+=\gamma_-$ (after suitable time translation) in $U$, which is impossible. Write $v_1=\gamma_+(s_+)=\gamma_-(s_-)\in U$. 

\medskip

{\bf Claim 1:}  $\gamma_+(\Rset)\cup \gamma_-(\Rset)\subset U$. 

\medskip

It suffices to show $\gamma_+(\Rset)\subset U$. The other one is similar.  If not, then there exists $a,b\in \Rset$ such that $\gamma_+(a)\in (\xi_0(\Rset)\cup\xi_1(\Rset))\cap \Zset^2$.   Thanks to (2) in Lemma \ref{lem:direction},  
$$
{\gamma_+(a)-v_1\over |\gamma_+(a)-v_1|}\in q_0\quad  \mathrm{or} \quad -{\gamma_+(a)-v_1\over |\gamma_+(a)-v_1|}\in q_0.
$$
Accordingly, $\gamma_+(a)-v_1$ is parallel with $(m,n)$, which implies that $v_1\in \xi_0(\Rset)\cup\xi_1(\Rset)$, contradicting the assumption that $v_1\in U$. So {\bf Claim 1} holds.

\medskip

{\bf Claim 2:}  $\gamma_+(\Rset)\cap \Zset^2=\gamma_-(\Rset)\cap \Zset^2=\gamma_+(\Rset)\cap \gamma_-(\Rset)=v_1$. 

\medskip

If not,  assume that $v_1\not=v_2\in \gamma_+(\Rset)\cap \Zset^2\subset U$. By part (2) of Lemma~\ref{lem:direction}, we must have $v_2-v_1$ is parallel with $(m,n)$. Hence, by (2) of Lemma \ref{lem:cusproperty}), $\gamma_+$ contains at least one homoclinic orbit whose first homology class is \( (m,n) \).  As a result, $U$ contain an extended homoclinic orbit whose first homology class is \( (m,n) \). This contradicts the assumption that $U$ is  a gap. Hence, {\bf Claim 2} holds.

By the two-dimensional topology, $\gamma_-$ and $\gamma_+ + (m,n)$ must also intersect in $U$.  
Similarly, this intersection point must be an integer vector $v_2 \in U$.  

\emph{Case 1.} If $v_1 = v_2$, then both $v_1$ and $v_1 - (m,n)$ lie on $\gamma_+$, 
which is impossible by the same reasoning as in the proof of Claim~2.  

\emph{Case 2.} If $v_1 \neq v_2$, then $\gamma_-$ contains two distinct integer vectors 
$v_1$ and $v_2$, which is again impossible by the same reasoning as in the proof of Claim~2.  

Thus, in either case we obtain a contradiction. \qed

\subsection{Proof of ``If" part}

Next, we prove the ``if" part.  Assume that $\partial F_0$ is differentiable at $p_0$ and $n_{p_0}=\{q_0\}$ for some $q_0\notin \Rset \Zset^2$.

Let 
$$
\mathcal{C}_{p_0}=\text{the collection of all ECCUS that belongs to $\mathcal{A}_{p_0}$}.
$$
By Corollary \ref{cor:aubrynonempty},  $\mathcal{C}_{p_0}\not=\emptyset$. In the following we will employ the method from \cite{TY} to accommodate a similar situation in which two distinct ECCUS  in $\mathcal{A}_{p_0}$ might intersect.

Choose $\widetilde{p} \in \partial F_0$ such that $n_{\widetilde{p}}\cap \Rset\Zset^2\not=\emptyset$. Select an ECCUS $\eta$ associated with $\widetilde{p}$ which is either an extension of a homoclinic orbit associated with $\widetilde{p}$, or the unit reparametrization of an unbounded periodic orbit on $\mathcal{M}_{\widetilde{p}}$.  Suppose that the first homology class of $\eta$ is $(\widetilde{m},\widetilde{n})$, which is a primitive integer vector and 
\be\label{eq:irraionalrightdrection}
(\widetilde{m},\widetilde{n})\cdot q_0>0.
\ee
If the above inequality is not true, we could just use $-\widetilde{p}$ instead. 

Without loss of generality, we assume that 
\be\label{eq:irrationalnormal}
(-\widetilde{n},\widetilde{m})\cdot q_0>0.
\ee
Employ this $\eta$ to establish the curved coordinate system with $\eta_k=\eta+k(-\widetilde{n},\widetilde{m})$ for $k\in \Zset$ if  and the corresponding $g_{\xi}$. Clearly, $\eta_{k_1}$ and $\eta_{k_2}$ are disjoint if $k_1\not=k_2$. Note if (\ref{eq:irrationalnormal}) is not true, we may just consider $\eta_k=\eta+k(\widetilde{n},-\widetilde{m})$.

Obviously, for each $\xi\in \mathcal{C}_{p_0}$, $\xi$ intersects $\eta_k$ exactly once. For $k\in \Zset$, let
\be\label{eq:irrationalcurvelabel}
\xi(s_k)=\eta_k(g_{\xi}(k))=\eta(g_{\xi}(k))+k(-\widetilde{n},\widetilde{m}).
\ee

For two curves $\xi,\xi'\in \mathcal{C}_p$, we define their distance as
$$
d(\xi,\xi')=\sum_{k\in \Zset}{\arctan (|g_{\xi}(k)-g_{\xi'}(k)|)\over |k|^2+1}
$$
Thanks to Lemma \ref{lem:twopointintersection}, $d(\xi,\xi')=0$ if and only if $\xi=\xi'$ (after suitable translation in time).

\begin{lem}\label{lem:irrationalg} Both $s_k$ and $g_\xi(k)$ are strictly increasing with $k$.
\end{lem}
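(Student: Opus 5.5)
The plan follows the two monotonicity arguments already used in the proof of Lemma \ref{lem:approxgap} (Claim 1 and Claim 2 there), now with the irrational normal direction $q_0$ supplying the orientation. Throughout, $\xi\in\mathcal{C}_{p_0}$ and $\eta_k=\eta+k(-\widetilde n,\widetilde m)$.

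\textbf{Step 1: $s_k$ is strictly monotone in $k$.} Since $\xi$ meets each $\eta_k$ exactly once and the $\eta_k$ are pairwise disjoint, the curve $\xi$ crosses the strips between consecutive $\eta_k$ one at a time. A non-monotone sequence $s_k$ would force $\xi$ to meet some $\eta_i$ twice. By (III) of Lemma \ref{lem:cusproperty}, $|\xi(s)|\to\infty$ as $|s|\to\infty$. Hence $\{s_k\}$ is either strictly increasing with $s_k\to+\infty$, or strictly decreasing with $s_k\to-\infty$ as $k\to+\infty$.

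\textbf{Step 2: fix the direction of $s_k$.} By part 1 of Lemma \ref{lem:direction} and $n_{p_0}=\{q_0\}$, we have $\xi(s)/|\xi(s)|\to q_0$ as $s\to+\infty$. The ECCUS $\xi(-s)$ is associated with $(-p_0,-v)$, so likewise $\xi(s)/|\xi(s)|\to-q_0$ as $s\to-\infty$.

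The band between $\eta_0$ and $\eta_k$ is a bounded neighborhood of a strip in direction $(\widetilde m,\widetilde n)$, translated by $k(-\widetilde n,\widetilde m)$. Because $(-\widetilde n,\widetilde m)\cdot q_0>0$ by (\ref{eq:irrationalnormal}), the points $\xi(s)$ with $s\to+\infty$ eventually lie on the side of $\eta_k$ with large positive index. More precisely, $(-\widetilde n,\widetilde m)\cdot\xi(s)\to+\infty$, while $\xi(s_k)$ stays within bounded distance, along the direction $(-\widetilde n,\widetilde m)$, of $k(-\widetilde n,\widetilde m)$ plus a point of $\eta$. It follows that $s_k\to+\infty$ forces $k\to+\infty$. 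Hence $s_k$ is strictly increasing.

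\textbf{Step 3: $g_\xi$ is increasing.} Write $\xi(s_k)=\eta(g_\xi(k))+k(-\widetilde n,\widetilde m)$, and project onto the direction $(\widetilde m,\widetilde n)$, using (\ref{eq:irraionalrightdrection}).
\begin{itemize}
\item Since $(\widetilde m,\widetilde n)\cdot q_0>0$, we get $(\widetilde m,\widetilde n)\cdot\xi(s_k)\to+\infty$ as $k\to+\infty$ and $\to-\infty$ as $k\to-\infty$.
\item The term $k(-\widetilde n,\widetilde m)$ is orthogonal to $(\widetilde m,\widetilde n)$, so it does not contribute to this projection.
\item Because $\eta$ is periodic with homology class $(\widetilde m,\widetilde n)$, the map $t\mapsto(\widetilde m,\widetilde n)\cdot\eta(t)$ tends to $\pm\infty$ as $t\to\pm\infty$.
\end{itemize}
Together these give $g_\xi(k)\to\pm\infty$ as $k\to\pm\infty$.

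\textbf{Step 4: upgrade to strict monotonicity.} This is the main obstacle. First try Lemma \ref{lem:mon}: if $g_\xi$ were not strictly monotone, then $\pm(-\widetilde n,\widetilde m)/|(-\widetilde n,\widetilde m)|\in n_{p_0}=\{q_0\}$, which is impossible because $q_0\notin\Rset\Zset^2$. So $g_\xi$ is strictly monotone, and by Step 3 it must be increasing.

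The point to check is that the argument of Lemma \ref{lem:mon} applies here, where the translation vector is $(-\widetilde n,\widetilde m)$ rather than the homology class of $\eta$. Non-monotonicity means $\xi$ and $\xi+(-\widetilde n,\widetilde m)$ cross. Both are ECCUS associated with $(p_0,v)$. By (II) of Lemma \ref{lem:cusproperty} and the argument of Lemma \ref{lem:twopointintersection}, they then either coincide up to a time shift or share an integer point. In either case some difference $\xi(t_2)-\xi(t_1)$ is a nonzero integer vector. Part 2 of Lemma \ref{lem:direction} then puts a rational direction in $n_{p_0}$, a contradiction.

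If that route fails, the fallback is to use the same integer difference directly: any two points of $\xi$ differing by an integer vector contradict the irrationality of $q_0$ via Lemma \ref{lem:direction}(2).
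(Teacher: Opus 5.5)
Your proposal is correct and follows essentially the same route as the paper's proof. Strict monotonicity of $s_k$ comes from the single-crossing property, and its orientation from projecting onto $(-\widetilde n,\widetilde m)$ using \eqref{eq:irrationalnormal}. Strict monotonicity of $g_\xi$ comes from Lemma \ref{lem:mon} together with $q_0\notin\Rset\Zset^2$, and its orientation from projecting onto $(\widetilde m,\widetilde n)$ using \eqref{eq:irraionalrightdrection}. The only difference is that you state explicitly the use of Lemma \ref{lem:direction}(1) (the asymptotic direction $q_0$ of $\xi$), which the paper leaves implicit.
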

Proof: Since $\xi$ cannot intersect $\eta_l$ more than once for any $l \in \mathbb{Z}$, the sequence $\{s_k\}$ must be strictly monotone in $k$. Multiplying $\pm (-\widetilde{n},\widetilde{m})$ on both sides of (\ref{eq:irrationalcurvelabel}) and combining this with (\ref{eq:irrationalnormal}), together with the fact that $|\eta \cdot (-\widetilde{n},\widetilde{m})|$ is uniformly bounded, we conclude that $s_k$ is strictly increasing in $k$ and moreover satisfies 
\[
\lim_{|k|\to \infty} |s_k| = \infty.
\]
Also, due to Lemma \ref{lem:mon}, $g_\xi(k)$ is strictly monotonic with respect to $k$. Multiplying $\pm (\widetilde{m}, \widetilde{n})$ on both sides of (\ref{eq:irrationalcurvelabel}),  (\ref{eq:irraionalrightdrection}) leads to $g_\xi(k)$ must be stricly increasing in $k$ and
\[
\lim_{|k|\to \infty} |g_\xi(k)| = \infty.
\]
\qed

The following conclusion follows immediately from the stability of ECCUS.

\begin{rmk}\label{set-closed} Given a sequence of curves $\{\gamma_l\}_{l\geq 1}\subset \mathcal{C}_p$, if 
$$
\lim_{l\to +\infty}\gamma_l=\gamma \quad \text{locally uniformly in $\Rset$},
$$
then $\gamma\in \mathcal{C}_{p_0}$ and for each $k\in \Zset$, $\lim_{l\to +\infty}g_{\gamma_l}(k)=g_\gamma(k)$.
\end{rmk}

Let $T_0>0$ be the minimum positive period of $\eta$.  Now we collect  intersections of $\xi_0$ with all curves in $\mathcal{C}_{p_0}$
$$
\mathcal{I}_{p_0}=\{t\in \Rset|\  \text{there exists $\xi\in \mathcal{C}_{p_0}$ such that $g_\xi(0)=t$}\}.
$$
Clearly,  $\mathcal{I}_{p_0}+T_0=\mathcal{I}_{p_0}$.  Owing to Remark \ref{set-closed},  $\mathcal{I}_{p_0}$ is a closed set.  Suppose that 
$$
\Rset\backslash \mathcal{I}_{p_0}=\cup_{i=1}^{\infty}(a_i,b_i),
$$
where $\{(a_i,b_i)\}_{i\geq 1}$ are disjoint open intervals. 

Now fixed $i\in \Nset$. Owing to Remark \ref{set-closed}, we could choose $\xi_{ai}$ and $\xi_{bi}$ in $\mathcal{C}_{p_0}$ such that (see the left picture on Figure \ref{f9} below)

\medskip

(1)$g_{\xi_{ai}}(0)=a_i$ and $g_{\xi_{bi}}(0)=b_i$ ; 

\medskip

(2) $d(\xi_{ai},\xi_{bi})$ attains the minimum value among all curves in $\mathcal {C}_{p_0}$ that satisfy the above (1). 

\medskip

\begin{lem} Given $i\in \Nset$,   for all  $k\in \Zset$, 
$$
g_{\xi_{ai}}(k)\leq g_{\xi_{bi}}(k).
$$
\end{lem}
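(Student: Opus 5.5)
We argue by contradiction, using the minimality of $d(\xi_{ai},\xi_{bi})$ in (2). Suppose there is $k_0\in\Zset$ with $g_{\xi_{ai}}(k_0)>g_{\xi_{bi}}(k_0)$. Since $g_{\xi_{ai}}(0)=a_i<b_i=g_{\xi_{bi}}(0)$, necessarily $k_0\neq 0$. Then the two curves $\xi_{ai}$ and $\xi_{bi}$ cross between $\eta_0$ and $\eta_{k_0}$, because each intersects every $\eta_k$ exactly once and $s_k$ is increasing by Lemma \ref{lem:irrationalg}. Both curves lie in $\mathcal{C}_{p_0}\subset\mathcal{A}_{p_0}$, so by the graph property two such ECCUS can meet only in a trivial way unless the crossing point is an integer point. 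Indeed, at a non-integer intersection both are unit reparametrizations of the same orbit near that point by Lemma \ref{lem:cusproperty}(II), so they would coincide locally and cannot cross transversally. Hence there is a crossing at some $w\in\Zset^2$ with $w=\xi_{ai}(\sigma_a)=\xi_{bi}(\sigma_b)$, and the ordering of the $g$-values flips there.

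\textbf{Construction of a better pair.} Following the gluing in the proof of Lemma \ref{lem:twopointintersection}, we swap tails at $w$. Define
\[
\hat\xi_a=\xi_{ai} \text{ on } (-\infty,\sigma_a],\ \text{continued by } \xi_{bi} \text{ after } \sigma_b,
\]
and let $\hat\xi_b$ be $\xi_{bi}$ up to $\sigma_b$, continued by $\xi_{ai}$ after $\sigma_a$ (both time-shifted to be continuous). We must check that these are ECCUS associated with $p_0$ and lie in $\mathcal{A}_{p_0}$. Both $\xi_{ai}$ and $\xi_{bi}$ are calibrated by every viscosity solution $u=p_0\cdot x+v$, since they lie in the Aubry set and hence are calibrated for every subsolution via (P3) and (\ref{graphsupport}). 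Therefore for $s_1<\sigma<s_2$ we get
\[
u(\hat\xi(s_2))-u(\hat\xi(s_1))=h(\hat\xi(s_1),w)+h(w,\hat\xi(s_2))\ge h(\hat\xi(s_1),\hat\xi(s_2)),
\]
while the reverse inequality always holds. This gives calibration, and then Lemma \ref{lem:cusaubry}-type reasoning (or upper semicontinuity, Remark \ref{rmk:upperaubry}) places them in $\mathcal{C}_{p_0}$.

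If the crossing lies on the side $k>0$ (the other case is symmetric), then $g_{\hat\xi_a}(0)=a_i$ and $g_{\hat\xi_b}(0)=b_i$, so $(\hat\xi_a,\hat\xi_b)$ still satisfies condition (1). For $k$ beyond the crossing, the values of $g$ are exchanged, which turns $|g_{\xi_{ai}}(k)-g_{\xi_{bi}}(k)|$ into $|g_{\xi_{bi}}(k)-g_{\xi_{ai}}(k)|$. To obtain a strict decrease we choose the crossing adapted to $k_0$. Take the first integer crossing point between $\eta_0$ and $\eta_{k_0}$, and use $\hat\xi_a=\min$ and $\hat\xi_b=\max$ of the two $g$-profiles instead. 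The lattice-like ordering means that taking pointwise min/max of the $g$-profiles corresponds to gluing at the (possibly several, discrete) integer crossing points. Then $|g_{\hat\xi_a}(k)-g_{\hat\xi_b}(k)|=|g_{\xi_{ai}}(k)-g_{\xi_{bi}}(k)|$ for every $k$, and the distance is unchanged. So this alone does not contradict minimality.

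\textbf{Alternative route via $\mathcal{I}_{p_0}$.} The cleaner contradiction is that after gluing we obtain a curve $\hat\xi\in\mathcal{C}_{p_0}$ with $a_i<g_{\hat\xi}(0)<b_i$, or more directly one that passes through the gap interval. Concretely, consider the translate $\xi_{ai}+j(-\widetilde n,\widetilde m)$ or a $\Zset^2$-translate that shifts $k_0$ back to index $0$; translation invariance of $\mathcal{C}_{p_0}$ holds since $\mathcal{A}_{p_0}$ is $\Zset^2$-periodic. Combining the crossing with the irrationality of $q_0$ (Step III style: curves with irrational rotation have $g$-values whose translates are dense modulo $T_0$), some glued curve would have $g(0)\in(a_i,b_i)$, contradicting $(a_i,b_i)\cap\mathcal{I}_{p_0}=\emptyset$. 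The main obstacle is this last step: making the ordering/translation argument rigorous with crossings occurring only at integer points, possibly with overlaps along homoclinic pieces. I would first try to handle it via the min/max gluing and then the $(a_i,b_i)$-gap contradiction, falling back on minimality of $d$ only to rule out degenerate overlapping configurations.
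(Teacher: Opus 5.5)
Your proposal has a genuine gap: it never reaches a contradiction. In your main construction you swap the tails of $\xi_{ai}$ and $\xi_{bi}$ at a crossing and compare the new pair $(\hat\xi_a,\hat\xi_b)$, or the min/max profiles. As you yourself observe, this only exchanges the $g$-values beyond the crossing and leaves $d$ unchanged, so the minimality in (2) is not violated.

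The missing idea is to replace only \emph{one} curve and keep the other fixed. Let $k_0\ge 1$ be the first positive index with $g_{\xi_{ai}}(k_0)>g_{\xi_{bi}}(k_0)$; the case $k<0$ is symmetric. The two curves then meet at some point $\xi_{ai}(t_0)=\xi_{bi}(t_0')$ lying between $\eta_{k_0-1}$ and $\eta_{k_0}$. This point is not on $\eta_0$, since the curves cross $\eta_0$ at the distinct points $\eta(a_i)\neq\eta(b_i)$. Define $\tilde\xi$ to be $\xi_{ai}$ for $t\le t_0$ and $\xi_{bi}(\cdot-t_0+t_0')$ for $t\ge t_0$. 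Then $\tilde\xi\in\mathcal{C}_{p_0}$: it is calibrated by your computation, and its image lies in $\xi_{ai}(\Rset)\cup\xi_{bi}(\Rset)\subset\mathcal{A}_{p_0}$. Whether the meeting point is an integer point is irrelevant. Moreover $g_{\tilde\xi}(0)=a_i$, so the pair $(\tilde\xi,\xi_{bi})$ still satisfies (1). Now $g_{\tilde\xi}(k)=g_{\xi_{bi}}(k)$ for every $k\ge k_0$, while the terms with $k<k_0$ are unchanged, so
\[
d(\tilde\xi,\xi_{bi})\le d(\xi_{ai},\xi_{bi})-\frac{\arctan\bigl(|g_{\xi_{ai}}(k_0)-g_{\xi_{bi}}(k_0)|\bigr)}{k_0^2+1}<d(\xi_{ai},\xi_{bi}),
\]
contradicting (2). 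This is the paper's argument. Condition (1) only pins the $g$-values at $k=0$, so you may let $\xi_{ai}$ merge into $\xi_{bi}$ after the first reversal, which kills all later terms of $d$.

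Your fallback via $\mathcal{I}_{p_0}$ does not close the gap either. Gluing at crossings of $\xi_{ai}$ and $\xi_{bi}$ only produces curves with $g(0)\in\{a_i,b_i\}$. The density statement you invoke is actually false in the present situation. Since $\mathcal{C}_{p_0}$ is invariant under $\Zset^2$-translations, every value $g_\xi(k)+lT_0$ with $\xi\in\mathcal{C}_{p_0}$ and $k,l\in\Zset$ lies in $\mathcal{I}_{p_0}$, and therefore never enters $(a_i,b_i)$. Ruling out a curve of $\mathcal{C}_{p_0}$ with $g(k)$ strictly between $g_{\xi_{ai}}(k)$ and $g_{\xi_{bi}}(k)$ is precisely the Claim inside Lemma \ref{lem:intersectionsum}. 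The paper proves it afterwards by the same minimality-of-$d$ gluing, and that proof presupposes the ordering you are trying to establish.
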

Proof:  It suffices to establish this for $k>0$. The proof for $k<0$ is similar.  We argue by contradiction. If not,  let $k_0>0$ be the first positive integer such that
\be\label{eq:onesidecross}
g_{\xi_{ai}}(k)> g_{\xi_{bi}}(k).
\ee
Then $\xi_i$ and $\xi_{i}^{'}$ have to  intersect at some point between $\eta_{k_0-1}$ and $\eta_{k_0}$.  Suppose that $v_0=\xi_{ai}(t_0)=\xi_{bi}(t_{0}^{'})$. Define
\be\label{eq:glueirrational}
\tilde \xi_i(t)=
\begin{cases}
\xi_{ai}(t) \quad \text{for $t\leq t_0$}\\[3mm]
\xi_{bi}(t-t_0+t_0^{'}) \quad \text{for $t\geq t_0$}.
\end{cases}
\ee
See the middle picture on Figure \ref{f9} below. Then $\tilde \xi_i\in \mathcal{C}_{p_0}$ and $g_{\tilde \xi_i}(0)=a_i$. By (\ref{eq:onesidecross}),  we have that  
$$
d(\tilde \xi_i, \xi_{bi})\leq d(\xi_{ai}, \xi_{bi})-{\arctan (|g_{\xi_{ai}}(k_0)-g_{\xi_{bi}}(k_0)|)\over |k_0|^2+1}<d(\xi_{ai}, \xi_{bi}).
$$
which contradicts to  (2) in the choice of $\xi_{ai}$ and $\xi_{bi}$. \qed

\begin{center}
\includegraphics[scale=0.5]{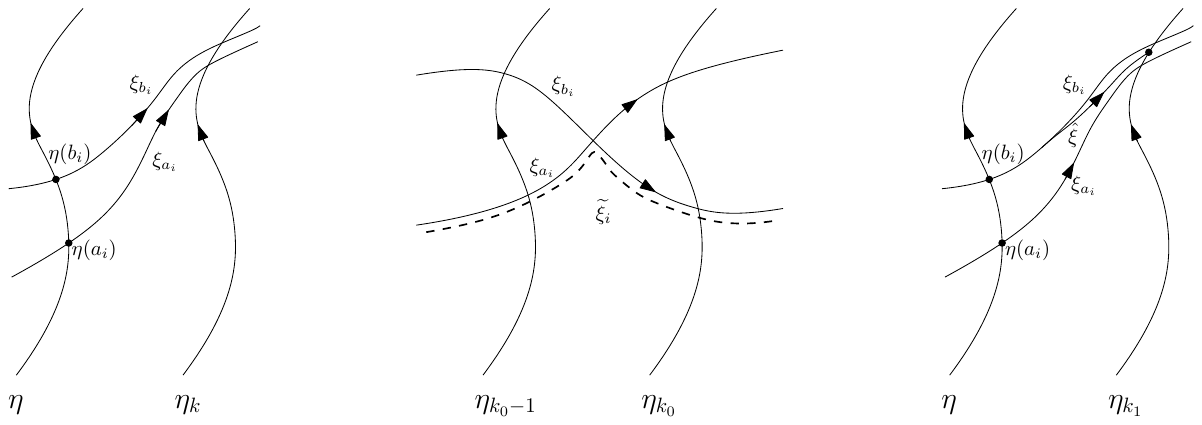}
\captionof{figure}{}
 \label{f9}
\end{center}

\begin{lem}\label{lem:intersectionsum} For all $i\in \Nset$, 
$$
\sum_{k\in \Zset}|g_{\xi_{ai}}(k)-g_{\xi_{bi}}(k)|\leq T_0.
$$
\end{lem}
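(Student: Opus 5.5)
We want to show that, for the pair $\xi_{ai},\xi_{bi}$, the total gap $\sum_k |g_{\xi_{ai}}(k)-g_{\xi_{bi}}(k)|$ is at most one period $T_0$ of $\eta$. By the previous lemma each term is $g_{\xi_{bi}}(k)-g_{\xi_{ai}}(k)\ge 0$, so the sum has no cancellation. The plan is to read this sum as a total length of disjoint subintervals of one period of $\eta$, projected onto the torus. The intervals to use are
\[
J_k=\bigl(g_{\xi_{ai}}(k),\,g_{\xi_{bi}}(k)\bigr)\subset\Rset,\qquad k\in\Zset.
\]
They are parameter intervals on $\eta$, via $\eta_k(t)=\eta(t)+k(-\widetilde n,\widetilde m)$.

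\textbf{Step 1: translate into $\mathcal{C}_{p_0}$.} Since $\eta_k=\eta+k(-\widetilde n,\widetilde m)$, translating $\xi_{ai}$ and $\xi_{bi}$ by $-k(-\widetilde n,\widetilde m)$ gives curves in $\mathcal{C}_{p_0}$, because $\mathcal{A}_{p_0}$ is $\Zset^2$-invariant. These translates meet $\eta_0$ at the parameters $g_{\xi_{ai}}(k)$ and $g_{\xi_{bi}}(k)$. Hence the endpoints of every $J_k$ lie in $\mathcal{I}_{p_0}$.

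\textbf{Step 2: project modulo $T_0$.} Periodicity of $\eta$ gives $\eta(t+T_0)=\eta(t)+(\widetilde m,\widetilde n)$. Translating further by integer multiples of $(\widetilde m,\widetilde n)$ shifts parameters by multiples of $T_0$, consistent with $\mathcal{I}_{p_0}+T_0=\mathcal{I}_{p_0}$. So the $J_k$ define arcs on the circle $\Rset/T_0\Zset$. The sum is at most $T_0$ once we know these arcs are pairwise disjoint, i.e. that for $(k,j)\neq(k',j')$,
\[
(J_k+jT_0)\cap(J_{k'}+j'T_0)=\emptyset.
\]

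\textbf{Step 3: disjointness (the main obstacle).} Here the minimality of $d(\xi_{ai},\xi_{bi})$ and the gap $(a_i,b_i)$ of $\mathcal{I}_{p_0}$ enter. The shifted pair $(\xi_{ai},\xi_{bi})+w$, with $w=(k'-k)(-\widetilde n,\widetilde m)+(j'-j)(\widetilde m,\widetilde n)$, is again an admissible ordered pair in $\mathcal{C}_{p_0}$. Suppose its strip overlaps the strip between $\xi_{ai}$ and $\xi_{bi}$ without coinciding. Then some translate crosses $\xi_{ai}$ or $\xi_{bi}$, and I would split into two cases:
\begin{itemize}
\item if the crossing happens inside the strip near $\eta_0$, the translate meets $\eta_0$ at a parameter in $(a_i,b_i)$, which contradicts $(a_i,b_i)\cap\mathcal{I}_{p_0}=\emptyset$;
\item if the crossing happens further out, I would splice curves as in (\ref{eq:glueirrational}) to build a curve of $\mathcal{C}_{p_0}$ hitting $\eta_0$ in $(a_i,b_i)$, or one that lowers $d$, contradicting the minimal choice.
\end{itemize}

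For the splicing, Lemma \ref{lem:twopointintersection} controls intersections: distinct curves of $\mathcal{C}_{p_0}$ meet at two points only through two integer points. Two such integer points on one curve would force a rational direction in $n_{p_0}$ by Lemma \ref{lem:direction}, which is excluded because $q_0\notin\Rset\Zset^2$ and $\partial F_0$ is differentiable at $p_0$. So curves in $\mathcal{C}_{p_0}$ cross at most once, and they behave like the ordered family of a circle map with irrational rotation number. The point I expect to need the most care is the exact coincidence case $w\neq 0$ with the translated pair identical to the original. That would make $\xi_{ai}$ invariant under a nonzero integer translation and hence periodic with rational homology class, which Lemma \ref{lem:direction}(2) rules out for the irrational normal $q_0$.

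Once the arcs are disjoint in $\Rset/T_0\Zset$, summing their lengths gives $\sum_k|g_{\xi_{ai}}(k)-g_{\xi_{bi}}(k)|\le T_0$.
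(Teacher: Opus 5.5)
Your proposal is correct and follows essentially the paper's route. The paper first proves, by the same gap-plus-minimality splicing you sketch, that no curve of $\mathcal{C}_{p_0}$ meets $\eta_k$ at a parameter strictly inside $J_k$. Hence each $J_k$, shifted by a multiple of $T_0$, is exactly one of the gaps $(a_j,b_j)$ of $\mathcal{I}_{p_0}$ in $[0,T_0]$, so distinct shifted intervals are automatically disjoint. The paper then excludes $I_{k_1}=I_{k_2}$ via Lemma~\ref{lem:direction}(2). For that step you only need that $\xi_{ai}$ and its integer translate share one point, so that $\xi_{ai}$ contains two points differing by a nonzero integer vector; you do not need the translated pair to be identical to the original, which is what your coincidence case assumes and which does not follow from coinciding arcs.
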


Proof: Fix $i\in \Nset$. For simplicity, we omit the dependence on $i$ below.  We first establish the following fact.

\medskip

{\bf Claim:} For all $k\in \Zset$, there does NOT exists a $\xi\in \mathcal{C}_{p_0}$ such that 
$$
g_{\xi}(k)\in (g_{\xi_{ai}}(k), g_{\xi_{bi}}(k)).
$$
 We argue by contradiction.  If not, then there exists $k_1\in \Nset$ and  $\hat \xi\in \mathcal{C}_{p_0}$ such that 
$$
g_{\hat \xi}(k_1)\in (g_{\xi_{ai}}(k_1), g_{\xi_{bi}}(k_1)).
$$
Since \(\hat{\xi}\) cannot pass through the segment of \(\eta\) over the interval \((a_0, b_0)\), it must intersect either \(\xi_{ai}\) (or \(\xi_{bi}\)) somewhere between \(\eta_0\) and \(\eta_{k_1}\). By gluing \(\hat{\xi}\) with \(\xi_{ai}\) (or \(\xi_{bi}\) correspondingly), in a manner similar to the construction in (\ref{eq:glueirrational}), we can obtain a new curve that satisfies condition (1) but has a smaller distance to \(\xi_{bi}\) (or \(\xi_{ai}\)). See the right picture on Figure \ref{f9} above.  This contradicts the optimality of the choice of \(\xi_{ai}\) and \(\xi_{bi}\).

Therefore, for fixed $k\in \Zset$, we could  choose a suitable integer $l_k$ such that for some $j_k\in \Nset$
$$
I_{k}=(g_{\xi_{ai}}(k), g_{\xi_{bi}}(k))+l_{k}T_0= (a_{j_k},b_{j_k})\subset [0,T_0].
$$

In addition, if $I_{k_1} = I_{k_2}$ for $k_1 \neq k_2$, then there exist $t_2 > t_1$ such that  
\[
\xi_{ai}(t_2) - \xi_{ai}(t_1) \in \Zset^2,
\]
which by part (2) of Lemma~\ref{lem:direction} implies that $n_{p_0} \cap \Rset \Zset^2 \neq \emptyset$.  
This is impossible.  
Hence the intervals $I_{k}$ are mutually disjoint for $k \in \Zset$, and the conclusion follows.  
\qed

 \medskip

Finally,  we are ready to prove the $p_0$ is a nonlinear point.  We argue by contradiction. If not, assume that $p_0$ lies on a line segment on $\partial F_0$.  Let $p$, $p'$ be two distinct points in the interior of the line segment.  Owing to Lemma \ref{lem:equal-aubry}, 
$$
\tilde {\mathcal{A}}_{p}=\tilde {\mathcal {A}}_{p'},
$$
which leads to $(p-p')\cdot q_0=0$,  $\mathcal{C}_p=\mathcal{C}_{p'}$, $\mathcal{I}_p=\mathcal{I}_{p'}$. 

Let $v_p$ and $v_{p'}$ be corresponding  viscosity solutions to the cell problem (\ref{mech-cell}). Then
$$
Du_p=Du_{p'} \quad \text{on $\mathcal{A}_{p}$}.
$$
Here $u_p=p\cdot x+v_p$ and $u_{p'}=p'\cdot x+v_{p'}$.  Write
$$
h(t)=u_{p}(\eta_0(t))-u_{p'}(\eta_0(t))=(p-p')\cdot \eta(t)+v_p(\eta_0(t))-v_{p'}(\eta_0(t)).
$$
Then
$$
h'(t)=0  \quad \text{for $t\in \mathcal{I}_p$}.
$$
Moreover, since \((p - p') \cdot q_0 = 0\) and \(q_0\) is not parallel to $(\widetilde{m},\widetilde{n})$, it follows that
\begin{equation}\label{g-infty}
\lim_{t \to +\infty} |h(t)| = +\infty.
\end{equation}
Besides, due to Lemma \ref{lem:intersectionsum}, 
$$
\lim_{|k|\to +\infty}|g_{\xi_{ai}}(k)-g_{\xi_{bi}}(k)|=0.
$$
Note that
$$
\begin{array}{ll}
&u_p(\eta_0(g_{\xi_{ai}}(k))-u_p(\eta_0(a_i))=u_{p'}(\eta_0(g_{\xi_{ai}}(k))-u_{p'}(\eta_0(a_i))\\[3mm]
&u_p(\eta_0(g_{\xi_{bi}}(k))-u_p(\eta_0(b_i))=u_{p'}(\eta_0(g_{\xi_{bi}}(k))-u_{p'}(\eta_0(b_i)).
\end{array}
$$
Sending $k\to \infty$ leads to that for each $i\in \Nset$,
$$
h(a_i)=h(b_i).
$$
Define a new function
$$
l(t)=
\begin{cases}
h(t) \quad \text{for $t\in \mathcal{I}_p$}\\
h(a_i) \quad  \text{for $t\in (a_i, b_i)$ and $i\in \Nset$}.
\end{cases}
$$
Clearly, $l(t)$ is Lipschitz continuous and $l'(t)=0$ for a.e. $t\in \Rset$. Hence $l(t)\equiv c$ for some constant $c$, which is absurd owing to (\ref{g-infty}). \qed

\section{Proof of Theorem \ref{main2}}. 

Throughout this section, we  assume  that $p_0\in \partial F_0$ and $n_{p_0}$ contains more than one elements. Let us first establish the following Lemma.

\begin{lem} We have that 
\be\label{combination}
n_{p_0}=\left\{{t{v}_0+(1-t){v}_1\over |t{v}_0+(1-t){v}_1|}|\ t\in [0,1]\right\}.
\ee
 ${v}_0,{v}_1\in G_p$ are two distinct primitive integer vectors such that  $G_{p_0}=\{{v}_0,\ {v}_1\}$ or $G_{p_0}=\{{v}_0,\ {v}_1,\  {v}_0+{v}_1\}$ and $\mathrm{det}(v_1,v_0)=1$.  Here $(v_1,v_0)$ denotes the $2\times 2$ matrix whose first and second columns are $v_1^{\top}$ and $v_0^\top$ (transposes of $v_1$ and $v_0$) respectively. 
\end{lem}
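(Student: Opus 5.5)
The plan is to show that the two extreme directions of the normal cone $n_{p_0}$ are realized by homoclinic orbits, and then read off the structure from Lemma~\ref{lem:G-property}.

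\textbf{Step 1: the extreme directions are rational.} Since $n_{p_0}$ is a closed arc with more than one element, write it as the arc from $q_-$ to $q_+$. We approximate $p_0$ along $\partial F_0$ by points $p_l\to p_0$ from one side. Each $p_l$ has a normal $q_l\in n_{p_l}\cap\Rset\Zset^2$ with $q_l\to q_+$ (respectively $q_-$ for the other side). Rational normals are dense, and points with normals strictly between $q_-$ and $q_+$ do not exist nearby, so this choice is possible.

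\textbf{Step 2: extract an orbit in the limit.} By Corollary~\ref{cor:aubrynonempty}, take periodic ECCUS $\xi_l$ on $\mathcal{A}_{p_l}$ with primitive homology $w_l$ in the direction of $q_l$, and pass to a limit ECCUS $\xi$ associated with $p_0$. This is valid by Lemma~\ref{lem:cusstability} and, by Remark~\ref{rmk:upperaubry}, $\xi$ lies on $\mathcal{A}_{p_0}$. Two cases arise.
\begin{itemize}
\item If $|w_l|$ stays bounded, then $w_l$ is eventually constant, and $q_+$ is parallel to a primitive integer vector $v$ realized by a periodic ECCUS.
\item If $|w_l|\to\infty$, then Lemma~\ref{lem:direction}(1) forces the asymptotic direction of $\xi$ to lie in $n_{p_0}$.
\end{itemize}

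\textbf{Step 3: rule out unbounded orbits (main obstacle).} I must show that an ECCUS $\xi$ on $\mathcal{A}_{p_0}$ containing an unbounded piece cannot exist when $n_{p_0}$ is nondegenerate. It must in fact be a concatenation of homoclinic orbits with integer endpoints. The idea is as follows. If some interval $I_j$ in Lemma~\ref{lem:cusproperty}(II) is infinite, then the corresponding global characteristic $\eta$ satisfies $Du_{p}=Du_{p'}$ on $\eta$ for all $p,p'$ that are subgradients in a neighbourhood. More concretely, take $p'=p_0+\epsilon q^\perp$ with $q^\perp$ chosen so that $p'\notin F_0$ but $p_0$ lies on the segment. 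Then compare the growth of $u_{p_0}$ along $\eta$ with the bound
\[
h(\eta(s_1),\eta(s_2))\geq p''\cdot(\eta(s_2)-\eta(s_1))+O(1)\quad\text{for all }p''\in F_0 .
\]
Because $n_{p_0}$ has interior, the direction of $\eta$ (which lies in $n_{p_0}$) can be perturbed while keeping $p_0$ a maximizer of $p''\cdot q$, so the equality $u(\eta(s_2))-u(\eta(s_1))=h$ can hold only along a direction that is an extreme of $n_{p_0}$. Combined with the non-crossing argument of Lemma~\ref{lem:twopointintersection}, this gives that the orbit must pass through integer points, and its displacements must lie in $n_{p_0}$. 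If this fails, I would instead use a curved-coordinate and gap argument as in Lemma~\ref{lem:approxgap}, producing an ECCUS that visits integer points $w$ with $w/|w|\to q_+$. Lemma~\ref{lem:direction}(2) then gives $q_+\in\Rset\Zset^2$. In either route, the segment of $\xi$ between consecutive integer points is a homoclinic orbit by Lemma~\ref{lem:cusproperty}(II), so $v_1:=$ the primitive vector along $q_+$ belongs to $G_{p_0}$, and similarly $v_0$ along $q_-$.

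\textbf{Step 4: conclude.} Lemma~\ref{lem:G-property}(2) gives $G_{p_0}\subset n_{p_0}$. Since $v_0$ and $v_1$ span the extreme directions, \eqref{combination} follows. Lemma~\ref{lem:G-property}(3) then gives that $G_{p_0}$ is $\{v_0,v_1\}$ or $\{v_0,v_1,v_0+v_1\}$, and that $\det(v_1,v_0)=1$ after fixing the orientation by Assumption~1's ordering convention. Note that a third element must lie strictly inside the cone, hence it is the sum $v_0+v_1$.
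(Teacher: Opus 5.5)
There is a genuine gap in Step 3, and that step carries the entire content of the lemma: showing that the extreme directions $q_\pm$ of $n_{p_0}$ are rational and carried by homoclinic orbits.

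The principle you invoke is false. You claim that calibration along an unbounded curve can only hold when its asymptotic direction is an extreme of $n_{p_0}$. But once homoclinic orbits $\gamma_0,\gamma_1$ of classes $v_0,v_1$ exist (as the lemma asserts), consider the concatenation $\gamma_0$, $\gamma_1+v_0$, $\gamma_0+v_0+v_1,\dots$. This is an ECCUS associated with $p_0$, calibrated on every subinterval. Its asymptotic direction $(v_0+v_1)/|v_0+v_1|$ lies in the interior of $n_{p_0}$. Your reasoning uses only calibration and the asymptotic direction, so it would exclude this curve as well. For interior $q$, the fact that $p_0$ is the unique maximizer of $p''\cdot q$ over $F_0$ is perfectly consistent and yields no contradiction.

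The fallback does not rescue the step. Lemma~\ref{lem:approxgap} presupposes a rational direction $(m,n)$ together with Assumptions 1--3, so it cannot be used to prove that $q_+$ is rational. Lemma~\ref{lem:direction}(2) only gives $w/|w|\in n_{p_0}$, which says nothing about rationality of the limit $q_+$.

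Your bounded-$|w_l|$ branch is also incomplete. A periodic ECCUS of class $v$ on $\mathcal{A}_{p_0}$ could a priori be a genuine periodic orbit avoiding $\Zset^2$; this really happens when $n_{p_0}$ is a singleton, cf.\ Example~\ref{example:nonremovable}. So you have not shown $v\in G_{p_0}$. Separately, Step 1 tacitly assumes that no flat edge with irrational normal abuts $p_0$.

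The missing idea, which is how the paper argues, needs neither the approximation $p_l\to p_0$ nor upper semicontinuity.
\begin{itemize}
\item Since $n_{p_0}$ is a nondegenerate arc, it contains two non-parallel rational directions. Corollary~\ref{cor:aubrynonempty}, together with Lemma~\ref{lem:cusaubry}, applied at $p_0$ itself gives periodic ECCUS $\xi_1,\xi_2$ on $\mathcal{A}_{p_0}$ with non-parallel primitive classes $w_1,w_2$.
\item These two curves must cross. By the graph property, they cannot meet at a non-integer point without following the same characteristic through it. That characteristic must end at an integer point: if it were unbounded in both directions, both $\xi_i$ would coincide with it, which is impossible for non-parallel classes.
\item Hence $\xi_i$ passes through some $z\in\Zset^2$, and therefore through every $z+kw_i$. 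By Lemma~\ref{lem:cusproperty}(II), the arc from $z$ to $z+w_i$ splits into finitely many homoclinic orbits. So $w_i$ is a sum of elements of $G_{p_0}$.
\item This holds for every rational direction in $n_{p_0}$. Rational directions are dense and $G_{p_0}$ is finite, so the closed cone spanned by $G_{p_0}$ contains $n_{p_0}$. Combined with $G_{p_0}\subset n_{p_0}$ (Lemma~\ref{lem:G-property}(2)), the extreme directions $q_\pm$ are realized by elements of $G_{p_0}$.
\end{itemize}
Your Step 4 then finishes the proof.
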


Proof: By part (3) of Lemma \ref{lem:G-property}, it suffices to  shows that for any \(q \in n_{p_0} \cap \mathbb{R}\mathbb{Z}^2\), there exist $m\in \Nset$, $\lambda_1,\lambda_2,..,\lambda_m>0$ such that
\be\label{eq:Gpcombination}
q = \sum_{k=1}^{m}\lambda_k u_k
\ee
for $u_1,u_2,..,u_m\in G_{p_0}$.

In fact, given two arbitrary distinct unit vectors $q_1,q_2\in \Rset \Zset^2\cap n_{p_0}$.  For $i=1,2$, write
$$
q_i=\lambda w_i
$$
for some $\lambda>0$ and a primitive integer vector $w_i$. Thanks to  Corollary \ref{cor:aubrynonempty},  for $i=1,2$,  there are two periodic (modulo $\Zset^2$) ECCUS $\xi_{\infty, i}$ on $\mathcal {A}_{p_0}$,  each with first homology class $w_i$.

Since $q_1$ and $q_2$ are not parallel,  $\xi_{\infty, 1}$ and $\xi_{\infty, 2}$  must intersect. As a result,  $\xi_{\infty, 1}(\Rset)\cap \xi_{\infty, 2}(\Rset)\cap \Zset^2\not= \emptyset$. Accordingly, without loss of generality, we may assume that for $i=1,2$
$$
\xi_{\infty, i}(T_{\infty, i})=w_i, \quad \xi_{\infty, i}(0)=(0,0).
$$
for some $T_{\infty, i}>0$.  Due to (2) in Lemma \ref{lem:cusproperty}, $G_p\not=\emptyset$ and $\xi_i$ are union of unit-speed reparametrized homoclinic orbits. Thus, $w_i$ can be written as a sum of elements in $G_p$ for $i=1,2$, which implies (\ref{eq:Gpcombination}).\qed

Hereafter, we may assume that 
\be\label{eq:twocaseg}
G_{p_0}=\{{v}_0,\ {v}_1\} \quad \mathrm{or}\quad G_{p_0}=\{{v}_0,\ {v}_1,\  {v}_0+{v}_1\},
\ee
where $v_0$ and $v_1$ satisfy that $\mathrm{det}(v_1^\top,v_0^\top)$. Let \(\gamma_0\) and \(\gamma_1\) be homoclinic orbits associated with \(p_0\) such that
\[\lim_{t\to -\infty}\gamma_0(t)=\gamma_1(t)=(0,0),\ \lim_{t\to \infty}\gamma_0(t)=v_0\ \mathrm{and}\ \lim_{t\to \infty}\gamma_1(t)=v_1.
\]
 Write   ${v}_0=(m_0,n_0)$ and ${v}_1=(m_1,n_1)$. Then 
\be\label{eq:det1}
\mathrm{det}(v_1^\top,v_0^\top)=\det
\begin{pmatrix}
m_1 & m_0 \\
n_1 & n_0
\end{pmatrix}
= m_1 n_0 - m_0 n_1=1.
\ee

\medskip

{\bf Proof of Theorem \ref{main2}.} Let us establish (\ref{eq:coneequation}) for $i=0$. The proof for $i=1$ is similar.

We argue by contradiction. Assume that it is not true. Then {\bf Assumption 1} and {\bf Assumption 2} in Section \ref{sec:main} holds.

Consider the periodic extension (see (\ref{eq:extension})) and unit-speed reparametrization of \(\gamma_0\) and \(\gamma_1\), which we continue to denote by \(\gamma_0\) and \(\gamma_1\) for convenience. By suitable translation in time, we may assume that for all $t\in \Rset$ and $k\in \Zset$,
\[
\gamma_0(0) = \gamma_1(0) = (0,0), \ \gamma_0(t+kT_0) =\gamma_0(t)+kv_0, \ \text{and} \ \gamma_1(t+kT_1) =\gamma_1(t)+kv_1
\]
for some \(T_0, T_1 > 0\).

For $i=0,1$, let $\Sigma_i$ be the collection of all periodic (modulo $\Zset^2$) ECCUS $\xi_i$ with first homology class $(m_0,n_0)$ and $\xi_i(0)=-i(m_1,n_1)$.  Clearly, $\gamma_0\in \Sigma_0$ and $\gamma_0-(m_1,n_1)\in \Sigma_1$ and for $i=0,1$,
$$
\xi_i(\Rset)\cap \Zset^2=\{\xi_i(0)+k(m_0,n_0)|\ k\in \Zset\}.
$$
Each $\xi_i$ consists of reparameterized homoclinic orbits. 

We intend to apply Lemma ~\ref{lem:approxgap}, which requires to identify the gap and verify {\bf Assumption 3} for suitable choices from $\Sigma_0$ and $\Sigma_1$.

Note that for $\xi_0\in \Sigma_0$ and $\xi_1\in \Sigma_1$, $\xi_0(\Rset)\cap \xi_1(\Rset)=\emptyset$ and the open strip $U$ between $\xi_0$ and $\xi_1$ does not contain any integer point. Otherwise, an integer translation of $\xi_0$ will intersect the $\gamma_1$ away from $\Zset^2$, which is impossible. 

Define the quantity
$$
I(\xi_0,\xi_1)=\int_{U}{1\over 1+|x|^4}\,dx. 
$$
By the stability of ECCUS and its smoothness away from $\Zset^2$, we may choose $\tilde \xi_0\in \Sigma_0$ and $\tilde \xi_1\in \Sigma_1$ such that 
$$
I(\widetilde \xi_0,\widetilde \xi_1)=\min_{\xi_0\in \Sigma_0, \ \xi_1\in \Sigma_1}I(\xi_0,\xi_1).
$$
Then  open strip $\widetilde U$ between  $\widetilde \xi_0$ and $\widetilde \xi_1$ is a {\it ``gap"} with the direction $(m_0,n_0)$ according to Definition \ref{def:gap}. 

Let $\Omega_+$ be the domain from \eqref{eq:twodomainsep} with $(m,n)$ replaced by $(m_0,n_0)$ and $\xi_0$ replaced by $\widetilde\xi_0$. 

{\bf Claim:}
\be\label{eq:assumtion3}
\tilde\xi_1(\Rset)\subset \Omega_+.
\ee

In fact, by \eqref{eq:det1}, for $k$ sufficiently large we have
\[
\tilde\xi_0(\Rset)-k(m_1,n_1)\subset \Omega_+.
\]
Meanwhile, since for each $k\in\Zset$, the open strip bounded by the two translates  $\widetilde\xi_0+(k-1)(m_1,n_1)$ and $\widetilde\xi_0+k(m_1,n_1)$ contains no integer points, $\widetilde{\xi_0}$ does not lie in any of those open strips for $k\geq 1$.  As a result, $\widetilde\xi_0+k(m_1,n_1)\subset \Omega_+$ for all $k\geq 1$. So (\ref{eq:assumtion3}) holds.

We are now in a position to apply  Lemma~\ref{lem:approxgap}. Let $\gamma_+$ denote the resulting curve.  Since $\gamma_+$ can not intersect $\gamma_1$ within $\widetilde{U}$, we must have (by suitable translation in space  if needed) 
$$
\gamma_+(t_+)=lv_0-(m_1,n_1)\in \tilde \xi_1(\Rset), \quad   \gamma_+(t_-)=(0,0) \quad \mathrm{and} \quad \gamma_+((t_-,t_+))\subset \widetilde{U}
$$
for some $t_+>0>t_-$ and $l\in \Zset$. Hence $lv_0-(m_1,n_1)\in G_{p_0}$, which implies that $lv_0-(m_1,n_1)=l_0v_0+l_1v_1$ for $l_0,l_1\in \{0,1\}$. This is absurd. \qed

\section{Local behavior near the critical point}
Assume that $W\in C^{\infty}(\Rset^2)$, $W(O)=0$ and  $W$ has a local maximum at the origin $O=(0,0)$. Suppose that the Hessian $D^2W(O)$ has two distinct negative eigenvalues $0>-a>-b$. If $\eta:[T_1,T_2]\to \Rset^2$ is a Lipschitz continuous curve, we denote
\be\label{eq:LW}
L_W([T_1,T_2],  \eta)=\int_{T_1}^{T_2}{1\over 2}|\dot \eta|^2-W(\eta)\,dt.
\ee

\begin{lem}\label{near-origin} Assume that 
$$
W(x_1,x_2)=-{a^2\over 2}x_{1}^{2}-{b^2\over 2}x_{2}^{2}.
$$
Let $\lambda={a\over b}\sqrt{b-a\over b+a}$. Then for $\beta_1, \beta_2\in (0,\lambda]$, $s_1<0<s_2$ and
$$
P_1=(s_1, -\beta_1 s_1)   \quad \mathrm{and} \quad P_2=(s_2, \beta_2 s_2).
$$
$$
\begin{array}{ll}
L_W([0,+\infty), \eta_1)+L_W((-\infty, 0], \eta_2)&=\frac{1}{2} \left[ a\left( s_1^2 + s_2^2 \right) + b\left( \beta_1^2 s_1^2 + \beta_2^2 s_2^2 \right) \right]\\[5mm]
&<\int_{0}^{T}{1\over 2}|\dot \xi|^2-W(\xi(t))\,dt
\end{array}
$$
Here $\eta_1=(s_1e^{-at}, -\beta_1s_1e^{-bt})$ and  $\eta_2=(s_2e^{at},  \beta_2s_2e^{bt})$.  $\xi:[0,T]\to \Rset^2$ is any curve satisfying that 
$$
\begin{cases}
\ddot \xi=-DW(\xi)\\
{1\over 2}|\dot \xi|^2+W(\xi(t))\equiv 0\\
\xi(0)=P_1,  \quad \xi(T)=P_2.
\end{cases}
$$

\end{lem}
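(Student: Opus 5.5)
The plan is to decouple the problem into two one-dimensional problems and compare explicit closed-form actions. Since $W$ is a diagonal quadratic form, the Euler--Lagrange equation $\ddot\xi=-DW(\xi)$ splits into $\ddot x_1=a^2x_1$ and $\ddot x_2=b^2x_2$. Hence
\[
x_1(t)=\alpha e^{at}+\beta e^{-at},\qquad x_2(t)=\gamma e^{bt}+\delta e^{-bt}.
\]
The partial energies $E_1=\tfrac12\dot x_1^2-\tfrac{a^2}{2}x_1^2=-2a^2\alpha\beta$ and $E_2=-2b^2\gamma\delta$ are separately conserved, and the zero-energy condition reads $E_1+E_2=0$.

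\textbf{Step 1 (the two model paths).} Along any zero-energy orbit, $\tfrac12|\dot\xi|^2-W=|\dot\xi|^2$. For $\eta_1,\eta_2$ this integrates directly to $\tfrac12[a(s_1^2+s_2^2)+b(\beta_1^2s_1^2+\beta_2^2s_2^2)]$, which is the stated equality.

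\textbf{Step 2 (action of $\xi$).} For $\xi$, write
\[
\int_0^T|\dot\xi|^2=S_a+S_b-(E_1+E_2)T=S_a+S_b,
\]
where $S_c=\int_0^T\tfrac12\dot x^2+\tfrac{c^2}{2}x^2$ is the harmonic-type action of the corresponding component. With endpoints $x(0)=z_1$, $x(T)=z_2$, this action has the classical formula
\[
S_c=\frac{c\bigl[(z_1^2+z_2^2)\cosh cT-2z_1z_2\bigr]}{2\sinh cT}.
\]
Comparing $S_a$ and $S_b$ with the model values gives two terms of opposite sign.

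\textbf{Step 3 (excess and deficit).} Set $y_1=-\beta_1s_1>0$ and $y_2=\beta_2s_2>0$.
\begin{itemize}
\item In the first coordinate the endpoints $s_1<0<s_2$ have opposite signs. The excess is $S_a-\tfrac a2(s_1^2+s_2^2)=\frac{a[(s_1^2+s_2^2)e^{-aT}+2|s_1s_2|]}{2\sinh aT}>0$. It is of order $a(|s_1|+|s_2|)^2e^{-aT}$.
\item In the second coordinate the endpoints $y_1,y_2$ have the same sign. The deficit is $\tfrac b2(y_1^2+y_2^2)-S_b=\frac{b[2y_1y_2-(y_1^2+y_2^2)e^{-bT}]}{2\sinh bT}$. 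It is at most of order $b\,y_1y_2e^{-bT}$.
\end{itemize}
Because $a<b$, the excess dominates for large $T$. The whole task is therefore to prove that excess $>$ deficit for every admissible $T$.

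\textbf{Step 4 (using the energy constraint; main obstacle).} This is the step I expect to be hardest. From the endpoint data I compute the first-coordinate coefficients:
\[
\alpha=\frac{s_2-s_1e^{-aT}}{2\sinh aT}>0,\qquad \beta=\frac{s_1e^{aT}-s_2}{2\sinh aT}<0,
\]
so $E_1>0$. The analogous formulas give $\gamma,\delta$ in terms of $y_1,y_2,T$, and the constraint $E_2=-E_1<0$ forces $\gamma\delta>0$. Concretely,
\[
b^2\gamma\delta=a^2|\alpha\beta|\approx \frac{a^2|s_1s_2|}{4\sinh^2(aT/2)}.
\]
Thus $T$ is not free. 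The second coordinate must stay bounded away from the axis, and this ties $y_1y_2$ (hence the $\beta_i$) to $T$.

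I would substitute this relation into the deficit to bound it by a quantity involving $a^2|s_1s_2|/b$ times a function of $T$. Then I would show that the resulting inequality, excess $>$ deficit, reduces to an elementary one-variable inequality in $u=e^{-aT}$ (with $e^{-bT}=u^{b/a}$) together with $\beta_i\le\lambda$. I would try first the crude bounds $y_1^2+y_2^2\ge 2y_1y_2$ and $\beta_1\beta_2\le\lambda^2$. The precise value $\lambda=\frac ab\sqrt{\frac{b-a}{b+a}}$ is presumably exactly what makes the worst case (small $T$, where $\sinh$ is near linear) go through. The factor $\frac{b-a}{b+a}$ should arise from expanding $\sinh$, $\cosh$ to the relevant order. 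If the crude route fails, I would instead minimize $S_a+S_b$ over $T$ subject to $E_1+E_2=0$ via a Lagrange multiplier, and check that the minimum strictly exceeds the model value when $\beta_i\le\lambda$.
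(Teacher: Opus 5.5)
\textbf{Gap: Step 4 is never carried out.} Your Steps 1--3 are correct, and the closed form for $S_c$ and the excess/deficit expressions check out. But Step 4 is the whole content of the lemma, and you leave it as a list of things you would try. Your diagnosis of that step is also off.

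Small $T$ is the easy regime, not the worst case: the excess grows like $(|s_1|+|s_2|)^2/(2T)$ while the deficit stays bounded. And the zero-energy constraint is not needed at all. The crude bounds you list already finish the proof for every $T>0$. Using $s_1^2+s_2^2\ge 2|s_1s_2|$ in the excess, and $y_1^2+y_2^2\ge 2y_1y_2$ together with $y_1y_2=\beta_1\beta_2|s_1s_2|$ in the deficit,
\[
S_a-\tfrac a2(s_1^2+s_2^2)\ \ge\ \frac{a|s_1s_2|\,(1+e^{-aT})}{\sinh aT}=\frac{2a|s_1s_2|}{e^{aT}-1},
\]
\[
\tfrac b2(y_1^2+y_2^2)-S_b\ \le\ \frac{b\,y_1y_2\,(1-e^{-bT})}{\sinh bT}=\frac{2b\beta_1\beta_2|s_1s_2|}{e^{bT}+1}.
\]
Two facts now finish it:
\begin{itemize}
\item $e^{bT}+1>e^{aT}-1>0$;
\item $b\beta_1\beta_2\le b\lambda^2=a\cdot\frac{a(b-a)}{b(b+a)}<a$.
\end{itemize}
Hence the first right-hand side strictly exceeds the second. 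The action of $\xi$ minus the model value equals excess minus deficit, so it is positive, which is the lemma. You should write this out rather than leave it conditional.

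\textbf{Minor slip in Step 2.} The sign of $(E_1+E_2)T$ is reversed, but the point is moot. Pointwise, $\tfrac12|\dot\xi|^2-W=\tfrac12\dot x_1^2+\tfrac{a^2}{2}x_1^2+\tfrac12\dot x_2^2+\tfrac{b^2}{2}x_2^2$, so the action is $S_a+S_b$ with no energy input. That is exactly why the energy constraint turns out to be irrelevant.

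\textbf{Comparison with the paper.} Once completed, your argument is a genuinely different and simpler route. The paper writes $\xi(t)=(\nu(e^{at}-e^{-at}),\,ce^{bt}+de^{-bt})$ on $[-T_1,T_2]$ and relies on the energy identity $a^2\nu^2=b^2cd$:
\begin{itemize}
\item combined with $\beta_i\le\lambda$, it gives $c,d\in(0,\nu\lambda]$;
\item via $ce^{bT_i}+de^{-bT_i}\ge 2\sqrt{cd}$, it gives the lower bound $e^{-aT_i}\le b\lambda/(2a)$ on the travel times;
\item the action difference is then $2(a\nu^2-bcd)=2a\nu^2(1-a/b)$ minus remainder terms, which are controlled by the smallness of $\lambda$.
\end{itemize}
Your decoupled comparison avoids all of this. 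It holds for every solution of $\ddot\xi=-DW(\xi)$ joining $P_1$ to $P_2$, of any energy, and needs only $\beta_1\beta_2\le a/b$. So the specific value of $\lambda$ plays no role beyond $\lambda^2\le a/b$.
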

\begin{center}
\includegraphics[scale=0.9]{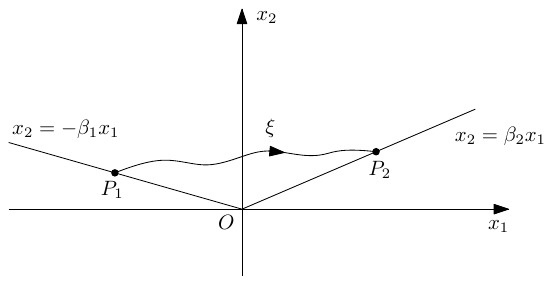}
\captionof{figure}{}
 \label{f10}
\end{center}

\nit Proof: The  $``="$ comes from direction computation. We only need to establish $``<"$.  By translation,  we can set 
$$
\xi(t)=(\nu(e^{at}-e^{-at}),  \  ce^{bt}+de^{-bt})
$$
satisfying that 
\be\label{eq:twoendpoint}
\xi(-T_1)=P_1   \quad \mathrm{and} \quad \xi(T_2)=P_2
\ee
for some $T_1,T_2>0$. Then $s_2>0$ lead to $\nu>0$ and
\be\label{eq:abcd}
{1\over 2}|\dot \xi|^2+W(\xi(t))\equiv 0  \quad \Rightarrow \quad \nu^2a^2=cdb^2
\ee
and by (\ref{eq:twoendpoint}),
\be\label{eq:endmatch}
\begin{cases}
{ce^{bT_2}+de^{-bT_2}\over \nu(e^{aT_2}-e^{-aT_2})}=\beta_2\leq \lambda\\[5mm]
{ce^{-bT_1}+de^{bT_1}\over \nu(e^{aT_1}-e^{-aT_1})}=\beta_1\leq \lambda
\end{cases}
\ee

Due to (\ref{eq:abcd}), $cd>0$.  Together with $s_2\beta_2>0$, we have that $c,d>0$.   Accordingly, (\ref{eq:endmatch}) implies that
\be\label{eq:controlcd}
0\leq c\leq \nu \beta_2 e^{(a-b)T_2}\leq \nu \lambda, \quad 0\leq d\leq \nu \beta_1 e^{(a-b)T_1}\leq \nu \lambda.
\ee
Moreover,  for $i=1,2$
$$
\lambda\geq  {2\sqrt{cd}\over \nu(e^{aT_i}-e^{-aT_i})}={2a\over  b(e^{aT_i}-e^{-aT_i})}\geq {2a\over  be^{aT_i}}
$$
Thus, for $i=1,2$,
\be\label{eq:controlnegative}
e^{-aT_i}\leq {b\lambda\over 2a}.
\ee
A straightforward calculation shows that 
$$
\int_{-T_1}^{T_2}{1\over 2}|\dot \xi (t)|^2-W(\xi(t))\,dt={\bf I}+{\bf II}
$$
Here
$$
{\bf I}={1\over 2}a\nu^2(e^{2aT_2}-e^{-2aT_1})+{1\over 2}a\nu^2(e^{2aT_1}-e^{-2aT_2})
$$
and
$$
{\bf II}={1\over 2}bc^2(e^{2bT_2}-e^{-2bT_1})+{1\over 2}bd^2(e^{2bT_1}-e^{-2bT_2}).
$$
Recall that for $i=1,2$,  
\[
s_i^{2}
  = \nu^{2}\!\bigl(e^{aT_i}-e^{-aT_i}\bigr)^{2},
\qquad
\beta_1^{2}s_1^2
  = \bigl(c\,e^{-bT_1}+d\,e^{bT_1}\bigr)^{2}, \qquad \beta_2^{2}s_2^2
  = \bigl(c\,e^{bT_2}+d\,e^{-bT_2}\bigr)^{2}.
\]

It is easy to see that
$$
{\bf I}+{\bf II}=\frac{1}{2} \left[ a\left( s_1^2 + s_2^2 \right) + b\left( \beta_1^2 s_1^2 + \beta_2^2 s_2^2 \right) \right]+{\bf III}.
$$
Here
$$
\begin{array}{ll}
{\bf III}&=2(a\nu^2-bcd)-a\nu^2(e^{-2aT_1}+e^{-2aT_2})-bc^2e^{-2bT_1}-bd^2e^{-2bT_2}\\[5mm]
&> 2(a\nu^2-bcd)-{1\over 2}{b^2\over a}\nu^2\lambda^2-bc^2-bd^2\quad \text{by (\ref{eq:controlnegative})}\\[5mm]
&\geq 2(a\nu^2-{a^2\nu^2\over b})-{1\over 2}{b^2\over a}\nu^2\lambda^2-2b\lambda^2\nu^2\quad \text{by (\ref{eq:abcd}) and (\ref{eq:controlcd})}\\[5mm]
&\geq  (a\nu^2-{a^2\nu^2\over b})>0
\end{array}
$$
if we choose 
$$
\lambda^2={a^2(b-a)\over b^2(b+4a)}.
$$
\qed

Assume that $u\in C^{0,1}(\Rset^2)$ is a viscosity subsolution to 
$$
{1\over 2}|Du|^2+W(x)=0  \quad \text{in $\Rset^2$}.
$$
subject to $u(0,0)=0$.  Clearly,
\be\label{eq:controlofu}
|u(x)|\leq C|x|^2  \quad \mathrm{and} \quad |Du(x)|\leq C|x|.
\ee
\begin{theo} \label{theo:no-flat-fill}Assume that
\[
W(x_1,x_2) = -\frac{a^2}{2}x_1^2 - \frac{b^2}{2}x_2^2 + O(|x|^3) \quad \text{as } x \to (0,0).
\]
Then there does not exist a sequence of positive numbers \( \{a_{1n}\}_{n \geq 1} \) and \( \{a_{2n}\}_{n \geq 1} \), together with a sequence of characteristics \( \xi_n : [0, T_n] \to \mathbb{R}^2 \) of \( u \), such that

\medskip

(i) $\lim_{n\to \infty}a_{1n}=\lim_{n\to \infty}a_{2n}=0$;\\

(ii) $\xi_n(0)=\left(-a_{1n}, {\lambda\over 2}a_{1n}\right)$ and $\xi_n(T_{n})=\left(a_{2n}, {\lambda\over 2}a_{2n}\right)$;

(iii) 
$$
\int_{0}^{T_{n}}{1\over 2}|\dot \xi_n|^2-W(\xi_n)\,dt=u\left(a_{2n}, {\lambda\over 2}a_{2n}\right)-u\left(-a_{1n}, {\lambda\over 2}a_{1n}\right).
$$
Here $\lambda$ is the same as that in Lemma \ref{near-origin}.
\end{theo}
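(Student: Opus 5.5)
We argue by contradiction and compare each characteristic $\xi_n$ with the concatenation of the two half-line orbits through the origin. Assume such sequences exist. Condition (iii) says $\xi_n$ calibrates $u$. Since $u$ is a subsolution, Lemma~\ref{calib} then shows that $\xi_n$ minimizes the action $L_W$ among all curves with the same endpoints, with no time constraint. In particular, the curve obtained by travelling from $P_1^n=(-a_{1n},\tfrac{\lambda}{2}a_{1n})$ to the origin and then from the origin to $P_2^n=(a_{2n},\tfrac{\lambda}{2}a_{2n})$ must have action at least $L_W([0,T_n],\xi_n)$. The contradiction will come from showing the opposite inequality for large $n$.

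\textbf{Step 1: rescale.} Set $\epsilon_n=\max(a_{1n},a_{2n})$ and $\zeta_n(t)=\xi_n(t)/\epsilon_n$. Because of the $O(|x|^3)$ term, $\zeta_n$ solves $\ddot\zeta=-DW_n(\zeta)$ with $W_n(y)=\epsilon_n^{-2}W(\epsilon_n y)\to W_0(y)=-\tfrac{a^2}{2}y_1^2-\tfrac{b^2}{2}y_2^2$ in $C^2$ on compact sets. Calibration gives $\tfrac12|\dot\xi_n|^2+W(\xi_n)=0$, so the rescaled energy is also zero, and the action scales exactly: $L_{W}(\xi_n)=\epsilon_n^2 L_{W_n}(\zeta_n)$. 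The bounds (\ref{eq:controlofu}) give $|u(x)|\le C|x|^2$, so the actions are $O(\epsilon_n^2)$ and the rescaled actions are bounded. From this we will show that $\zeta_n$ stays in a bounded set and has bounded duration. The argument uses the convexity estimate $h''\ge a^2 h$ from the proof of Lemma~\ref{lem:lower-bound}, together with the fact that the zero-energy orbits of $W_0$ that pass away from $O$ are hyperbolic. The one exception is a $\zeta_n$ that passes very close to $O$, where the travel time is unbounded; we handle that case separately in Step 3.

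\textbf{Step 2: limit comparison.} Pass to a subsequence so that $a_{in}/\epsilon_n\to s_i$, with one of the $s_i$ equal to $1$. If $T_n$ stays bounded, $\zeta_n$ converges to a zero-energy orbit $\xi$ of $W_0$ joining $P_1=(-s_1,\tfrac{\lambda}{2}s_1)$ to $P_2=(s_2,\tfrac{\lambda}{2}s_2)$. We apply Lemma~\ref{near-origin} with $\beta_1=\beta_2=\lambda/2\le\lambda$ and $s_1$ replaced by $-s_1$. It gives a strict gap: the action of $\xi$ exceeds the two-piece value $\tfrac12[a(s_1^2+s_2^2)+b\tfrac{\lambda^2}{4}(s_1^2+s_2^2)]$ by at least $c_0(s_1^2+s_2^2)$. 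From the proof, the gap is at least $(a-a^2/b)\nu^2$, and $\nu$ is comparable to $s_i$ thanks to (\ref{eq:controlnegative}). If one of the $s_i$ is $0$, the limit endpoint is the origin itself and we need a separate short argument; the natural substitute is to compare directly with the straight exponential approach.

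\textbf{Step 3: build a competitor for $W$ itself.} We need curves from $P_1^n$ to $O$ and from $O$ to $P_2^n$ whose $W$-action is $\epsilon_n^2$ times the quadratic two-piece value plus $o(\epsilon_n^2)$. The plan is to use the local stable and unstable manifolds of the hyperbolic point $O$ for the flow of $W$. Because $W$ is $C^\infty$, these are $C^1$-close to those of $W_0$ at scale $\epsilon_n$. Points on the sector boundary of slope $\lambda/2$ do not lie on the invariant manifolds, so we do not use exact orbits. Instead we use the explicit curves $\eta_1,\eta_2$ of Lemma~\ref{near-origin}, rescaled by $\epsilon_n$. Their $W$-action differs from their $W_0$-action by $\int O(|\eta|^3)\,dt=O(\epsilon_n^3)$, since $|\eta|$ decays exponentially. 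Combining this with Step 2 and the convergence $\epsilon_n^{-2}L_W(\xi_n)\to L_{W_0}(\xi)$, we get that the competitor's action is strictly smaller than $L_W(\xi_n)$ for large $n$. This contradicts the minimality of $\xi_n$.

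\textbf{Main obstacle.} The hardest step is the unbounded-time case: $T_n\to\infty$, with $\zeta_n$ spending a long time near $O$, so that the limit may consist of two orbits meeting at the origin. There the gap $(a-a^2/b)\nu^2$ may degenerate. We would handle this by writing $\zeta_n$ as a perturbation of the explicit hyperbolic solution $(\nu(e^{at}-e^{-at}),ce^{bt}+de^{-bt})$, uniformly for small perturbations, so that the $\mathbf{III}$-estimate of Lemma~\ref{near-origin} survives. We expect $\mathbf{III}\ge c\nu^2-O(\epsilon_n)\nu^2$. We also need to show that $\nu$ cannot be small relative to $\epsilon_n$. This follows from the constraint $cd\,b^2=\nu^2a^2$ together with the endpoint slopes $\lambda/2$, which force $c,d\lesssim\nu$ while the $x_2$-coordinates at the endpoints are of order $\epsilon_n$.
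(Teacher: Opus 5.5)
Your bounded-time case is fine and is in substance what the paper does. The paper rescales $u$ too and uses that the limit orbit calibrates a subsolution $\tilde u$ of the quadratic problem, instead of building an explicit competitor for $W$. Both work, and a limiting endpoint at $O$ is excluded at once: a zero-energy orbit reaching $O$ in finite time has zero velocity there and is therefore constant. It is also the easy case. For an exact quadratic orbit, (\ref{eq:abcd}) and (\ref{eq:controlcd}) give $a^2\nu^2/b^2=cd\le\beta_1\beta_2\nu^2\le\lambda^2\nu^2$, which is already impossible because $\lambda<a/b$.

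The genuine gap is the case $T_n\to\infty$. You flag it but do not resolve it, and it cannot be resolved at the single scale $\epsilon_n$. In that case the two ends of $\zeta_n$ converge to finite-action zero-energy orbits of $W_0$ tending to $O$. The zero-energy orbit of $W_0$ from a given point tending to $O$ is unique ($\dot y=(-ay_1,-by_2)$), so these limits are exactly the curves $\eta_1,\eta_2$ of Lemma \ref{near-origin}. Hence $\liminf\epsilon_n^{-2}L_W(\xi_n)$ is at least, and by your own competitor at most, the two-piece value, and there is no gap left to exploit. In your notation, a long passage near $O$ means $\nu/\epsilon_n\to0$, because $s_i=\nu(e^{aT_i}-e^{-aT_i})$. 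So the claim that $\nu$ cannot be small relative to $\epsilon_n$ is just a restatement of the boundedness of $T_n$. The reason you give does not prove it, since $ce^{bT_2}+de^{-bT_2}$ can be of order $\epsilon_n$ with $c$ tiny when $T_2$ is large. Moreover, describing a $W$-orbit uniformly over an unbounded time interval near $O$ as a small perturbation of an explicit quadratic orbit is itself an unproved linearization statement.

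The idea you are missing is to change the scale rather than fight the degeneration.
\begin{itemize}
\item Since $\xi_n$ never hits $O$ (zero energy plus ODE uniqueness), planar topology gives a sub-arc of $\xi_n$ contained in the closed sector $D^+=\{x_2\ge\frac{\lambda}{2}|x_1|\}$ or in $-D^+$, with one endpoint on each bounding ray. The case $-D^+$ reduces to $D^+$ by a reflection, which preserves the hypotheses on $W$.
\item A sub-arc of a characteristic still satisfies (iii), and its endpoints still tend to $O$. So you may rescale by the size of the sub-arc's own endpoints, which in the degenerate scenario is much smaller than $\epsilon_n$. The action bound coming from $|u(x)|\le C|x|^2$ persists at that scale.
\item Inside the sector, $T=\infty$ is impossible. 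The limit would be a finite-action zero-energy orbit of $W_0$ from $(-1,\frac{\lambda}{2})$ tending to $O$, namely $(-e^{-at},\frac{\lambda}{2}e^{-bt})$. This curve leaves $D^+$ immediately, because $\frac{\lambda}{2}(e^{-bt}-e^{-at})<0$ for $t>0$.
\end{itemize}
Once $T<\infty$, the zero-velocity argument keeps the other endpoint away from $O$. Your Step 2 (or the paper's limiting-subsolution argument) then finishes the proof.
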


\nit Proof:  We argue by contradiction. Suppose that such sequences \( \{a_{1n}\}_{n\geq 1} \), \( \{a_{2n}\}_{n\geq 1} \), and  \( \{\xi_n\}_{n\geq 1} \) do exist.

By the two dimensional  topology, symmetry with respect to the origin, and (if necessary) by selecting a suitable portion of \( \xi_n \), we may assume that

$$
\xi_n([0,T_n])\subset D^{+}=\left\{(x_1,x_2)|\  x_{2}\geq{ \lambda\over 2}|x_1|\right\}.
$$

Without loss of generality, we assume that 
$$
r_n=|a_{1n}|\geq a_{2n}.
$$
Denote
$$
W_n(x)={W(r_nx)\over r_{n}^{2}} \quad \text{and} \quad \eta_n(t)={\xi_n(t)\over r_n}
$$
Then
$$
\lim_{n\to +\infty}W_n(x)={\widetilde W}(x)=-{a^2\over 2}x_{1}^{2}-{b^2\over 2}x_{2}^{2}  \quad \text{locally uniformly in $\Rset^2$},
$$
$$
\ddot \eta_n(t)=-DW_n(\eta_n),\quad \eta_n(0)=\left(-1,{\lambda\over 2}\right),\quad  {1\over 2}|\dot \eta_n|^2+W_n(\eta_n)\equiv 0, 
$$
and by (iii) and  (\ref{eq:controlofu}), 
\be\label{finite-energy}
\int_{0}^{T_{n}}{1\over 2}|\dot \eta_n|^2-W_n(\eta_n)\,dt={u(\xi_n(T_n))-u(\xi_n(0))\over r_n^2}\leq C. 
\ee
Up to a subsequence if necessary, we may assume that 
$$
\lim_{n\to \infty}T_n=T,  \quad \lim_{n\to \infty}{a_{2n}\over r_n}=\tilde a, 
$$
$$
\lim_{n\to \infty}{\eta_n}=\eta
$$
{\bf Claim: $T<\infty$ and $\tilde a>0$}.

We argue by contradiction. Suppose \( T = \infty \). Then $\eta([0, \infty)) \subset D^+$, 
\be\label{eq:etaprop}
\ddot{\eta}(t) = -D{\widetilde W}(\eta(t)), \quad \eta(0) = \left(-1, \frac{\lambda}{2}\right), \quad  \frac{1}{2}|\dot{\eta}(t)|^2 + {\widetilde W}(\eta(t)) \equiv 0.
\ee
Moreover, 
\[
\int_{0}^{\infty} \sqrt{-2{\widetilde W}(\eta(t))} \, |\dot{\eta}(t)| \, dt 
\ \leq\ 
\int_{0}^{\infty} \left( \frac{1}{2} |\dot{\eta}(t)|^2 - {\widetilde W}(\eta(t)) \right) dt 
\ \leq\ C,
\]
which implies that \( \lim_{t \to \infty} \eta(t) = (0, 0) \). Consequently, 
\[
\eta(t) = \left(-e^{-at}, \, \frac{\lambda}{2} e^{-bt}\right).
\]
This contradicts  \( \eta([0, \infty)) \subset D^+ \). Hence, \( T < \infty \).

If \( \tilde{a} = 0 \), then \( \eta(T) = (0, 0) \), which would imply \( \dot{\eta}(T) = (0, 0) \) by the last equality in (\ref{eq:etaprop}), and thus \( \eta(t) \equiv (0, 0) \) for all \( t \in [0, T] \), a contradiction. Therefore, we must have \( \tilde{a} > 0 \).

Upon a subsequence if necessary,  we may assume that 
$$
\lim_{n\to +\infty}{u(r_nx)\over r_{n}^{2}}=\tilde u(x)  \quad \text{locally uniformly}.
$$
Clearly, $\tilde u\in C^{0,1}(\Rset^2)$ is a viscosity subsolution of 
$$
{1\over 2}|D\tilde u|^2-\widetilde W=0  \quad \text{in $\Rset^2$}
$$
and
$$
\int_{0}^{T}{1\over 2}|\dot {\eta}|^2-{\widetilde W}(\eta)\,dt=\tilde u\left(\tilde a,{\lambda\over 2}\tilde a\right)-\tilde u\left(-1, {\lambda\over 2}\right)
$$
subject to $\eta(0)=\left(-1, {\lambda\over 2}\right)$ and $\eta(T)=\left(\tilde a,{\lambda\over 2}\tilde a\right)$.  Due to Lemma \ref{calib}, this implies that $\eta$ is an absolutely minimizing curve connecting $\left(-1, {\lambda\over 2}\right)$ and $\left(\tilde a,{\lambda\over 2}\tilde a\right)$. This contradicts to Lemma \ref{near-origin}. 

\qed

\begin{lem}\label{lem:nodirectionoverlap} For $p\in F_0$,  there does not exists two orbits $\gamma_+$ and $\gamma_-$ on $\mathcal{A}_p$ such that
$$
\lim_{t\to \infty}\gamma_+(t)=\lim_{t\to -\infty}\gamma_-(t)=O=(0,0)
$$
and the two limiting  directions coincide
\be\label{eq:directionequal}
\lim_{t\to \infty}{\gamma_+(t)\over |\gamma_+(t)|}=\lim_{t\to -\infty}{\gamma_-(t)\over |\gamma_-(t)|}. 
\ee
\end{lem}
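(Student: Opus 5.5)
We argue by contradiction and reduce the statement to Theorem \ref{theo:no-flat-fill}. Suppose $\gamma_\pm$ are orbits on $\mathcal{A}_p$ with $\gamma_+(t)\to O$ as $t\to\infty$, $\gamma_-(t)\to O$ as $t\to-\infty$, and a common limiting direction $q$. First we identify $q$. Near $O$ the stable/unstable manifolds of the hyperbolic rest point are tangent to the eigendirections. Hence $q\in\{\pm v_a,\pm v_b\}$, after rotating coordinates so that $v_a=e_1$ and $v_b=e_2$. Then $W=V$, and the theorem applies with $u=p\cdot x+v$ for a viscosity solution $v$, normalized so that $u(O)=0$. Since $u$ is differentiable on $\mathcal{A}_p$ with $|Du|^2=-2V$, we get $|Du|\le C|x|$ near $O$, so (\ref{eq:controlofu}) holds.

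Next we build the forbidden sequence. The orbit $\gamma_+$ enters $O$ and $\gamma_-$ leaves $O$ along the same ray $\mathbb{R}_+q$. Both are calibrated: for $s<t$,
\[
u(\gamma_\pm(t))-u(\gamma_\pm(s))=\int_s^t\Bigl(\tfrac12|\dot\gamma_\pm|^2-V(\gamma_\pm)\Bigr)\,dt .
\]
Concatenating them gives a curve $\gamma_+*\gamma_-$ from points of $\gamma_+$ near $O$ to points of $\gamma_-$ near $O$, which is calibrated for $u$ (the action through $O$ is the sum). The difficulty is that this curve passes through $O$ and does not join the points $(-a_1,\frac\lambda2 a_1)$ and $(a_2,\frac\lambda2 a_2)$ required in (ii).

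The key step is therefore to produce genuine characteristics $\xi_n$ joining such symmetric points. We plan to use two-dimensional topology together with the approximation strategy of Lemma \ref{lem:approxgap}. Take $\tilde p$ close to $p$ and orbits on $\mathcal{A}_{\tilde p}$, or simply global characteristics of $v$ passing near $O$ on the side of the wedge opposite to $q$. The pair $\gamma_+,\gamma_-$ forms a "cusp" along $q$. By the graph property and the fact that characteristics of $u$ cannot cross $\gamma_\pm$ away from $\mathbb{Z}^2$, any characteristic of $u$ starting near $O$ inside the thin cusp between them must follow them into a neighborhood of $O$ and leave on the other side. Concretely, take points $x_n\to O$ lying on the line $x_2=\frac\lambda2 x_1$ or $x_2=-\frac\lambda2 x_1$ inside the appropriate sector determined by $q$. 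Take the backward characteristic of $v$ from $x_n$; exact calibration is automatic for backward characteristics of a viscosity solution. Use planar separation by $\gamma_+\cup\gamma_-\cup\{O\}$ to force it to enter through the line $x_2=\frac\lambda2|x_1|$ on the other side of the axis. Restricting to the portion between the two crossings yields $\xi_n$ satisfying (i)--(iii), possibly after applying a symmetry $x\mapsto -x$ or the reflection $x_1\mapsto -x_1$ of the quadratic model. This contradicts Theorem \ref{theo:no-flat-fill}.

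The main obstacle is this last geometric step: guaranteeing that the connecting characteristics stay in the sector $D^+$ and hit both lines $x_2=\pm\frac\lambda2 x_1$ at scales comparable to each other. If the direct topological argument is insufficient, the fallback is a blow-up. Rescale $\gamma_\pm$ by $r\to0$; in the limit they become the ray solutions $\eta_1,\eta_2$ of Lemma \ref{near-origin}, meeting at $O$ along the same axis. The concatenated calibrated curve then has action $\frac12[a(s_1^2+s_2^2)+b(\cdots)]$. This can be compared directly with a nearby zero-energy trajectory avoiding $O$. For the $v_a$-direction, Lemma \ref{near-origin} shows the concatenation through $O$ is strictly cheaper, which is consistent. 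The contradiction must come from the calibration inequality $u(y)-u(x)\le h(x,y)$ applied to the rescaled limit $\tilde u$, which is a subsolution. For the $v_b$-direction, the analogous computation with the roles of $a,b$ swapped is expected to make the smooth bypass strictly cheaper, contradicting calibration of $\gamma_+*\gamma_-$. Verifying this sign in each of the two eigen-directions is where we expect the real work to lie.
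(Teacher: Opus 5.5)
Your proposal has a genuine gap. Everything rests on producing characteristics $\xi_n$ of $u$ from $(-a_{1n},\frac{\lambda}{2}a_{1n})$ to $(a_{2n},\frac{\lambda}{2}a_{2n})$, and that step is not justified. The geometry also does not force it.

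A backward characteristic of $v$ from a point $x_n$ near $O$ may come from far away, or may emanate from $O$ itself along an unstable direction. The set $\gamma_+\cup\gamma_-\cup\{O\}$ is a cusp tangent to the single ray $\mathbb{R}_+q$. Planar separation by it does not make anything pass over the $v_b$-axis from the left line to the right line, still less at comparable scales $a_{1n}\sim a_{2n}$.

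Theorem \ref{theo:no-flat-fill} is also the wrong tool. It rules out calibrated curves that pass over $O$ from one side of the $v_b$-axis to the other. Its mechanism, Lemma \ref{near-origin}, compares with the broken path that enters along $-v_a$ and leaves along $+v_a$, i.e.\ along opposite rays. That is why the paper uses it in Corollary \ref{cor:uniqueaubry} and not here.

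Your fallback repeats this misreading. Lemma \ref{near-origin} says nothing about the same-ray configuration. Your conclusion that the $v_a$-direction is ``consistent'' is wrong: the statement forbids all four directions, and the contradiction does not come from a bypass comparison at all.

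The missing idea is a direct comparison of orders of magnitude at scale $1/m$, using two facts you already wrote down. Choose $x_m=\gamma_+(T_m)$ and $y_m=\gamma_-(-T'_m)$ with $|x_m|=|y_m|=1/m$ and the tails of $\gamma_\pm$ beyond them inside a ball where $V\le-\frac{a^2}{4}|x|^2$. Calibration along $\gamma_\pm$ (let $t\to\pm\infty$ and add) gives
\[
u(y_m)-u(x_m)=\int_{T_m}^{\infty}\Bigl(\tfrac12|\dot\gamma_+|^2-V(\gamma_+)\Bigr)dt+\int_{-\infty}^{-T'_m}\Bigl(\tfrac12|\dot\gamma_-|^2-V(\gamma_-)\Bigr)dt .
\]
Each integrand is at least $\frac{a}{\sqrt2}|\xi\cdot\dot\xi|=\frac{a}{2\sqrt2}\bigl|\frac{d}{dt}|\xi|^2\bigr|$, so the right-hand side is at least $\frac{a}{\sqrt2\,m^2}$.

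On the other hand, $u$ is a subsolution, so $|Du|\le\sqrt{-2V}\le C|x|$ a.e.\ near $O$. This holds everywhere in a ball, not only on $\mathcal{A}_p$ as in your justification. Hence $u(y_m)-u(x_m)\le\frac{C}{m}|y_m-x_m|$. Because the two limiting directions coincide, $|y_m-x_m|=o(1/m)$, so $u(y_m)-u(x_m)=o(1/m^2)$, a contradiction.

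This argument needs neither the identification of $q$ as an eigendirection, nor Theorem \ref{theo:no-flat-fill}, nor Lemma \ref{near-origin}. Your blow-up instinct is the same argument in disguise. After rescaling, both endpoints converge to the same point on $\mathbb{R}_+q$, so the rescaled $u$-difference tends to zero while the in-and-out action stays bounded below by a positive constant. The contradiction is with the calibration \emph{equality} along $\gamma_\pm$, not with $u(y)-u(x)\le h(x,y)$ tested against a bypass curve.
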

\begin{center}
\includegraphics[scale=0.3]{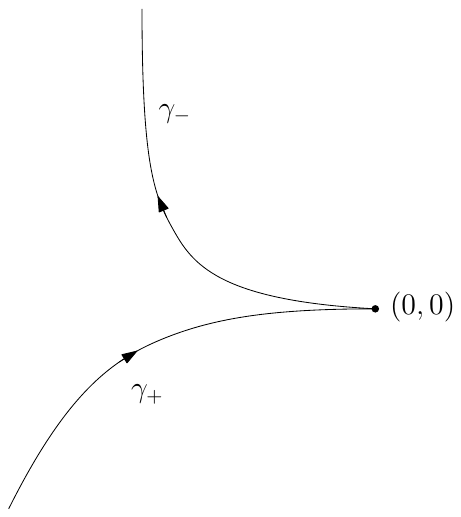}
\captionof{figure}{}
 \label{f11}
 \end{center}

Proof: Note that the existence of these two limits are ensured by Lemma \ref{lem:vunique}.   We argue by contradiction by assuming that they are the same. 

Note that there exists $r_0>0$ such that
$$
V(x)\leq \widetilde V(x)=-{a^2\over 4}|x|^2  \quad \text{for $x\in B_{r_0}(O)$}.
$$
Without loss of generality, we may assume that 
$$
\gamma_{+}([0,\infty))\cup \gamma_{-}([0,\infty))\subset B_{r_0}(O).
$$
Accordingly for $T,T'>0$,
\be\label{eq:twoVcomparison}
\begin{array}{ll}
L_V([T,+\infty), \gamma_+)\geq L_{\widetilde V}([T,+\infty), \gamma_+)\\[3mm]
L_V((-\infty, -T'], \gamma_-)\geq L_{\widetilde V}((-\infty, -T'], \gamma_-).
\end{array}
\ee
See (\ref{eq:LW}) for the definition of $L_V$ and $L_{\widetilde{V}}$. 
Note that for $a < b$ and $\xi \in AC([a,b], B_{r_0}(O))$,
$$
\int_{a}^{b}{1\over 2}|\dot \xi|^2+{a^2\over 4}|\xi|^2\,dt\geq {\sqrt{2}a\over 2}\int_{a}^{b}|\dot \xi\cdot \xi|\,dt\geq {\sqrt{2}a\over 4}\left||\xi(b)|^2-|\xi(a)|^2\right|. 
$$
Accordingly, 
\be\label{eq:Vtildecontrol}
\begin{array}{ll}
L_{\widetilde V}([T,+\infty),\gamma_+)+L_{\widetilde V}((-\infty, -T'], \gamma_-)\geq {a\sqrt{2}\over 4}(\bigl|\gamma_+(T)\bigr|^2+\bigl|\gamma_-(-T')\bigr|^2).
\end{array}
\ee

Now let $v$ be a viscosity solution of \eqref{mech-cell} and set $u(x)=p\cdot x+v(x)$. For $m\in\Nset$, choose $T_m,T'_m>0$ with $T_m,T'_m\to\infty$ as $m\to\infty$ such that
\be\label{eq:tworange}
\begin{array}{ll}
&\bigl|\gamma_+(T_m)\bigr|=\bigl|\gamma_-(-T'_m)\bigr|=\frac{1}{m}\\[3mm]
&\gamma_+([T_m,\infty))\cup\gamma_-((-\infty,-T'_m])\subset B_{1\over m}(O).
\end{array}
\ee
Then
$$
\begin{array}{ll}
u(\gamma_-(-T_m'))-u(\gamma_+(T_m))&=L([T_m,+\infty), \gamma_+)+L((-\infty, -T_m'], \gamma_-)\\[3mm]
&>{a\sqrt{2}\over 2m^2} \quad \text{by (\ref{eq:twoVcomparison}) and (\ref{eq:Vtildecontrol})}. 
\end{array}
$$
Meanwhile, owing to (\ref{eq:controlofu}) and (\ref{eq:tworange}),
$$
\begin{array}{ll}
u(\gamma_-(-T_m'))-u(\gamma_+(T_m))&\leq {C\over m}|\gamma_-(-T_m')-\gamma_+(T_m)|\\[3mm]
&={o(1)\over m^2} \quad \text{by (\ref{eq:directionequal})},
\end{array}
$$
where $\lim_{m\to \infty}o(1)=0$. This is a contraction.
\qed

. 

As a corollary, we obtain a necessary condition for when \(\mathbb{R}^2\) can be filled with periodic ECCUS. A more general conclusion under additional generic assumptions was established in Proposition~3.1 of \cite{Cheng}; however, those assumptions are not required in our setting.

\begin{cor}\label{cor:uniqueaubry} For $p_0\in  \partial F_0$,  assume that $n_{p_0}=\{q_0\}$ for some $q_0\in \Rset\Zset^2$.  If there is no gap between every two periodic (modulo $\Zset^2$) ECCUS associated with $p_0$, then 
$$
\gamma_{b0}(t),  \quad \gamma_{b0}(-t), \quad  \gamma_{b1}(t),  \quad \gamma_{b1}(-t).
$$
are all homoclinic orbits associated with either $p_0$ or $-p_0$.
\end{cor}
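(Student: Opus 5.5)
The plan is to first show that the absence of gaps forces periodic ECCUS in $\mathcal{A}_{p_0}$ to accumulate at the origin, and then to read off the local picture there using Theorem~\ref{theo:no-flat-fill} and Lemma~\ref{lem:nodirectionoverlap}.

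\emph{Step 1: accumulation at $O$.} Write $q_0=\lambda_0(m,n)$ with $(m,n)$ primitive. By Corollary~\ref{cor:aubrynonempty}, $\mathcal{P}_0\neq\emptyset$; Lemma~\ref{lem:cusaubry} puts these curves in $\mathcal{A}_{p_0}$. Two distinct elements of $\mathcal{P}_0$ cannot cross away from $\Zset^2$. They also cannot share two integer points: by Lemma~\ref{lem:direction} such points differ by a multiple of $(m,n)$, so the curves would coincide up to the homoclinic pieces between them. Hence the elements of $\mathcal{P}_0$, together with their integer translates, are ordered transversally to $(m,n)$. By Remark~\ref{set-closed}, the union $\Lambda$ of their images is closed. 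The no-gap hypothesis says there is no open strip between two consecutive members, so $\Lambda=\Rset^2$. In particular, there are curves $\zeta_k\in\mathcal{P}_0$, not passing through $O$, with $d(\zeta_k,O)\to 0$. By (II) of Lemma~\ref{lem:cusproperty}, the portion of $\zeta_k$ near $O$ is a characteristic of $u=p_0\cdot x+v$ with zero energy, and it enters and leaves a small ball $B_r(O)$.

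\emph{Step 2: local blow-up.} Rotate coordinates so that $v_a,v_b$ are the coordinate axes and put $W=V$. The passages of $\zeta_k$ through $B_r(O)$ converge to a broken limit. For each entry/exit side we keep track of which of the four sectors $\{\pm x_1>0,\ |x_2|\le\frac{\lambda}{2}|x_1|\}$ (near $\pm v_a$) or their complements (near $\pm v_b$) the passage crosses at small scale. If a sequence of passages enters near one side of the $v_b$-axis and exits on the other side, we rescale by $r_n$ as in the proof of Theorem~\ref{theo:no-flat-fill}. The passages cross a cone $D^+$ from the line $x_2=\frac{\lambda}{2}|x_1|$ at $x_1<0$ to the same line at $x_1>0$, and this is exactly what Theorem~\ref{theo:no-flat-fill} forbids. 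Hence the passages must squeeze along the $v_b$ direction. In the limit, the curves $\zeta_k$ converge, via Lemma~\ref{lem:cusstability} and upper semicontinuity (Remark~\ref{rmk:upperaubry}), to an ECCUS $\zeta\in\mathcal{A}_{p_0}$ whose incoming branch arrives at $O$ and whose outgoing branch leaves $O$. Each branch is tangent either to $\pm v_a$ or to $\pm v_b$, and both cannot have the same tangent direction, by Lemma~\ref{lem:nodirectionoverlap}.

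\emph{Step 3: forcing the $v_b$ branches.} Generic orbits converging to $O$ are tangent to $\pm v_a$, the weak direction. The limiting branches must nevertheless avoid the configuration ``enter from one side of the $v_b$-axis and exit on the other within $D^\pm$,'' which Theorem~\ref{theo:no-flat-fill} rules out. Combining this with the nearby passages of $\zeta_k$ on both sides of $O$ (the no-gap property lets us choose $\zeta_k$ on either side), we conclude that the incoming and outgoing limiting branches must be the unique strong-stable and strong-unstable curves $\gamma_{b0},\gamma_{b1}$ of Lemma~\ref{lem:vunique}, both in forward and in reversed time. Finally, each such branch in $\mathcal{A}_{p_0}$, followed backward or forward, lies on a periodic ECCUS of class $(m,n)$. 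Since it cannot go to infinity without contradicting Lemma~\ref{lem:cusproperty}(III) combined with its one-sided limit at $O$, it must end at another integer point. It is therefore a homoclinic orbit associated with $p_0$, or with $-p_0$ after time reversal (using that $\xi(-t)$ is an ECCUS for $(-p_0,-v)$).

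\textbf{Main obstacle.} The main obstacle is Step 3. We must show that all four strong-direction branches actually occur, and not merely some of them. The plan is to argue separately for the two sides of the line through $O$ parallel to $(m,n)$, using approximating curves $\zeta_k$ from each side. On each side, the rescaling of Theorem~\ref{theo:no-flat-fill} excludes crossing via the $v_a$ sectors. What remains is to check that the case analysis on the sector geometry is exhaustive.
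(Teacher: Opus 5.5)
Your strategy matches the paper's: accumulate periodic ECCUS at $O$, restrict the tangent directions there with Theorem~\ref{theo:no-flat-fill} and Lemma~\ref{lem:nodirectionoverlap}, then conclude with the uniqueness in Lemma~\ref{lem:vunique}. But Step~3 depends on a claim that fails: that the no-gap hypothesis lets you approximate the \emph{same} limiting curve through $O$ from both sides.

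The source of the problem is in Step~1. The statement that two elements of $\mathcal{P}_0$ cannot share two integer points is false. Periodic extensions of two distinct homoclinic orbits of class $(m,n)$ both pass through every $k(m,n)$. The chain of lenses between them is not a gap in the sense of Definition~\ref{def:gap}, whose boundary curves must be disjoint, and it contains no approximating orbits. So $\Lambda=\Rset^2$ fails.

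What the hypothesis actually gives is the following. Let $\eta_0$ (through $O$) and $\eta_1$ (through $(m_1,n_1)$, with $(m_1,n_1)\cdot(-n,m)=1$) bound the thinnest strip; the paper minimizes $\int_U(1+|x|^4)^{-1}dx$ for this. Then the uppermost curve $\eta_0$ is approximated only from above, and the lowermost curve $\eta_1-(m_1,n_1)$ only from below. One-sided approximation only forces each of these curves to have \emph{at least one} branch along $\pm v_b$. A right-angle corner whose quadrant lies on the approximated side is not excluded by Theorem~\ref{theo:no-flat-fill}.

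Your intermediate conclusion, that one limiting curve has both branches strong, therefore need not hold. None of your tools excludes $\eta_0$ arriving along $v_b$ and $\eta_1-(m_1,n_1)$ arriving along $-v_b$, with both leaving along $v_a$. In that configuration the corollary is true but your claim is false.

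The missing step is an argument by contradiction aimed at the correct target: both $v_b$ and $-v_b$ occur among the four directions of $\eta_0$ and $\eta_1-(m_1,n_1)$ at $O$. By Lemma~\ref{lem:vunique} and time reversal, this already yields all four homoclinic orbits. Suppose, say, $-v_b$ never occurs. Then both curves have a $v_b$ branch, and there are two cases.
\begin{itemize}
\item If both $v_b$ branches are incoming (or both outgoing), uniqueness in Lemma~\ref{lem:vunique} gives the same homoclinic orbit. A periodic ECCUS of class $(m,n)$ through $O$ is the periodic extension of a single homoclinic orbit, so $\eta_1=\eta_0+(m_1,n_1)$. Only now do you have two-sided approximation of one curve, and Theorem~\ref{theo:no-flat-fill} rules out the remaining right-angle corner from its three-quadrant side.
\item If one branch is incoming and the other outgoing, then $\gamma_{b0}(-t)$ is a homoclinic orbit in $\mathcal{A}_{p_0}$ of class $-(m,n)$. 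This contradicts Lemma~\ref{lem:G-property}(2), since $n_{p_0}=\{q_0\}$.
\end{itemize}
So the gap is not the exhaustiveness of your sector cases; that bookkeeping works once two-sidedness is available. What is missing is identifying the two extremal curves through $O$ and excluding the mixed case by time reversal.
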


Proof: Write \( q_0 = \lambda_0(m_0,n_0) \) for $\lambda_0>0$ and $(m_0,n_0)$ is a primitive integer vector.  As in (\ref{eq:P_0}),  let $\mathcal{P}_0$ be the collection of  all periodic (modulo \( \mathbb{Z}^2 \)) ECCUS associated with $p_0$ with first homology class \( (m_0,n_0)\). By Corollary~\ref{cor:aubrynonempty}, $\mathcal{P}_0\not=\emptyset$. Since $n_{p_0}$ has a single vector, due to Lemma \ref{lem:direction}, a curve $\xi\in \mathcal{P}_0$ is either a unit reparameterization of the periodic expansion (\ref{eq:extension}) of a homoclinic orbit or a unit reparametrization of a periodic orbit on $\mathcal{M}_{p_0}$.

Since there is no gap with the direction $(m_0,n_0)$,  there must exist $\{\xi_n\}_{n\geq 1}\subset \Sigma$ such that $\lim_{n\to \infty}\xi_n(0)=(0,0)$, which leads to the existence of a periodic (modulo \( \mathbb{Z}^2 \)) ECCUS that contain $k(m_0,n_0)$ for all $k\in \Zset$, which must be a unit reparameterization of the periodic expansion (\ref{eq:extension}) of a homoclinic orbit with first homology class $(m_0,n_0)$. 

For \(i = 0,1\), let \(\Sigma_i\) denote the set of periodic (modulo \(\mathbb{Z}^2\)) ECCUS $\xi_i$ with first homology class \((m_0,n_0)\) and
\[
\xi_i(0) = (0,0) + i(m_1,n_1).
\]
Here $(m_1,n_1)$ is the integer vector satisfying $(m_1,n_1)\cdot (-n_0,m_0)=1$.

According to the previous discussions, $\Sigma_i\not=\emptyset$ for $i=0,1$.  For  $\xi_0\in \Sigma_0$ and $\xi_1\in \Sigma_1$, $\xi_0(\Rset)\cap \xi_1(\Rset)=\emptyset$.  Let $U$ be the open strip bounded by $\xi_0$ and $\xi_1$. Clearly, $U\cap \Zset^2=\emptyset$. 

As in  the proof of Theorem \ref{main2}, we introduce the quantity
$$
I(\xi_0,\xi_1)=\int_{U}{1\over 1+|x|^4}\,dx. 
$$
By the stability of ECCUS and their smoothness away from $\Zset^2$, we may choose $\eta_0\in \Sigma_0$ and $\eta_1\in \Sigma_1$ such that 
$$
I(\eta_0,\eta_1)=\min_{\xi_0\in \Sigma_0, \ \xi_1\in \Sigma_1}I(\xi_0,\xi_1).
$$
Let $\widetilde{U}$ be the region between $\eta_0$ and $\eta_1$. Clearly, $\widetilde{U}$ does not contain any homoclinic orbit.  As in (\ref{eq:twodomainsep}), $\eta_0$ separats $\Rset^2$ into $\Omega_+$ and $\Omega_-$. Without loss of generality, let us assume that $\eta_1(\Rset)\subset \Omega_+$. The proof for the other case is similar. 

Therefore, if $\widetilde{U}$ is not a gap,  we must have unbounded periodic (modulo $\Zset^2$) orbits on $\mathcal{M}_{p_0}$ lying in $U$ approaching $\eta_0$ and $\eta_1$. 

Due to Lemma~\ref{lem:vunique}, the following four limits exist:
\[
v_0^{\pm} = \lim_{s \to 0^{\pm}} \frac{\eta_0(s)}{|\eta_0(s)|}, 
\quad 
v_1^{\pm} = \lim_{s \to 0^{\pm}} \frac{\eta_1(s)-(0,1)}{|\eta_1(s)-(0,1)|},
\]
and they all belong to the set \( \{ \pm v_a, \pm v_b \} \). Note that $\to 0^-$ and $\to 0^+$ are corresponding to stable and unstable directions respectively.

From now on, it suffices to examine the local behavior near the origin \( (0,0) \). By applying a suitable rotation, we may assume that
\[
V(x) = -\frac{a^2}{2} x_1^2 - \frac{b^2}{2} x_2^2 + O(|x|^3),
\]
with \( v_a = (1,0) \) and \( v_b = (0,1) \).

Note that $\xi(t)$ on  $\mathcal{A}_p$ if and only if  $\xi(-t)$ on  $\mathcal{A}_{-p}$. The  uniqueness part in Lemma~\ref{lem:vunique} implies that if $v_b \in \{v_0^{+},v_0^{-},v_1^{+},v_1^{-}\}$, then $\gamma_{0b}$ is a homoclinic orbit on $\mathcal{A}_p$ or $\mathcal{A}_{-p}$. Similar statement holds for $-v_b$ and $\gamma_{1b}$.

To complete the proof, we argue by contradiction. Suppose the statement of the lemma is false.  Combining with  Theorem~\ref{theo:no-flat-fill},  Lemma \ref{lem:nodirectionoverlap} and the discussion above, we have that for $i=0,1$
$$
v_i^{+}\not=v_i^-,\quad \{v_i^{+},v_i^{-}\}\cap \{v_b,\ -v_b\}\not=\emptyset,\quad   \{v_b,\ -v_b\}\not\subseteq \{v_0^{+},v_0^{-},v_1^{+},v_1^{-}\}.
$$
Accordingly, without loss of generality, we may assume that 
$$
v_i^{+}\not=v_i^-,\quad v_b\in \{v_i^{+},v_i^{-}\},\quad  -v_b\notin \{v_0^{+},v_0^{-},v_1^{+},v_1^{-}\}.
$$

It suffices to consider the following two cases. Other cases are similar. 

\medskip

\noindent
\textbf{Case 1:} \( v_0^-=v_1^-= v_b \). \\
Then due to the uniqueness park in Lemma~\ref{lem:vunique}, we have that $\gamma_{b0}(t)$ is a homoclinic orbit associated with $p_0$, and $\eta_0$ is the unit speed reparameterization of its extension and \( \eta_1 = \eta_0 + (1,0) \). Then $v_0^+=v_1^+=v_a$ (or $-v_a$). Accordingly, periodic orbits in $\mathcal{M}_{p_0}$ approaches $\eta_0$ in both $\Omega_+$ (within $\widetilde{U}$) and $\Omega_-$ (translation of those approaching $\eta_1$ in $\widetilde{U}$). Then Theorem~\ref{theo:no-flat-fill} will be violated from one of directions. See Figure \ref{f12} below.

\medskip

\noindent
\textbf{Case 2:} \( v_0^-=v_b \) and \( v_1^+= v_b \). \\
Then  due to the uniqueness part in Lemma~\ref{lem:vunique}, both $\gamma_{b0}(t)$ and $\gamma_{b0}(-t)$ are homoclinic orbits on $\mathcal{A}_{p_0}$, which is absurd since \( \gamma(t) \) is a homoclinic orbit associated with \( p_0 \) if and only if \( \gamma(-t) \) is a homoclinic orbit associated with \( -p_0 \).

\medskip

In both cases we reach a contradiction. 

\begin{center}
\includegraphics[scale=0.6]{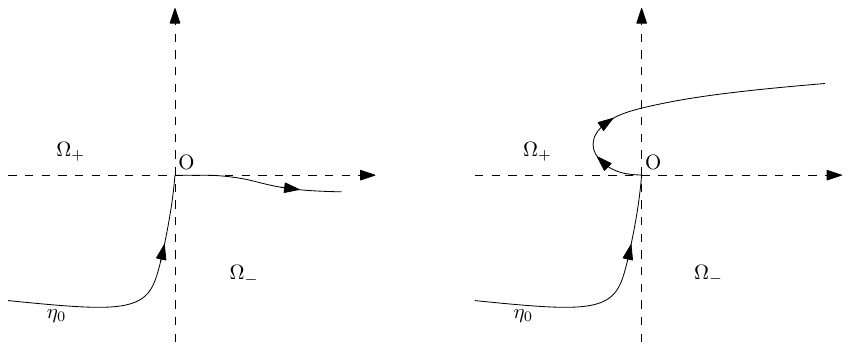}
\captionof{figure}{}
 \label{f12}
\end{center}

\qed

\section{Examples}

In this section, we will discuss several interesting examples.

\begin{example}\label{example:nonremovable}
    
 We first construct a $V$ such that there exists $p\in \partial F_0$ which is not a linear point but $n_p=\{(0,1)\}$, which demonstrates that our result is sharp. Consider 
$$
\tilde u(x_1,x_2)=x_2\sqrt{x_{1}^{2}+x_{2}^{2}}.
$$
It is easy to check that 
$$
{1\over 2}|D\tilde u|^2-{x_{1}^{2}\over 2}-2x_{2}^{2}=0 \quad \text{on $\Rset^2$}.
$$
Its characteristics 
$$
\Psi_{\tau}(t)=\left({\tau\over 2}(e^{t}+e^{-t}), \ {|\tau|\over 4}(e^{2t}-e^{-2t})\right).
$$
with 
$$ 
\begin{cases}
{d\Psi_{\tau}(t)\over dt}=D\tilde u(\Psi_{\tau}(t))\\[3mm]
\Psi_\tau(0)=(\tau,0)\\
\end{cases}
$$
 foliate  $\Rset^2\backslash \{x_1=0\}$ when $\tau$ runs through non-zero numbers. Then it is not hard to see that we can construct a  function $u=u(x)\in C^{\infty}(\Rset^2\backslash\{0\})$ such that

\begin{enumerate}[label=\arabic*.]
\item  $v(x)=u-x_2$ is periodic;

\item
$$
u=\tilde u \quad \mathrm{and} \quad \{x\in \Rset^2|\ Du(x)=(0,0)\}=\Zset^2 \quad  \text{in  $B_{1\over 3}(O)$}
$$
\item
$$
u(x_1,0)=0  \quad \text{for $x_1\in  \Rset$}
$$
\item  For every $x_1\not=0$, the gradient flow
$$
\begin{cases}
\dot \xi_{x_1} (t)=Du(\xi_{x_1} (t))\\
\xi_{x_1} (0)=(x_1,0)
\end{cases}
$$
is a periodic (modulo $\Zset^2$)  orbit with first homology class $(0,1)$, i.e., there exists $T_{x_1}>0$ such that 
$$
\xi_{x_1}(T_{x_1})-\xi_{x_1}(0)=(0,1).
$$
\end{enumerate}
Finally, let 
$$
V(x)=-{1\over 2}|Du|^2\in C^{\infty}(\Rset^2).
$$
Then $V(x)=-{x_{1}^{2}\over 2}-2x_{2}^{2}$ in  $B_{1\over 3}(O)$ and  $V<0$ in $\Rset^2\backslash \Zset^2$.

Denote by $\overline H$ be the effective Hamiltonian associated with ${1\over 2}|p|^2+V$.
\end{example}
\medskip

{\bf Claim:} $p_0=(0,1)$ is a nonlinear point and $n_{p_0}=\{(0,1)\}$. 

\medskip
Proof: Owing to the above (4), all gradient flows from (4) are periodic (modulo $\Zset^2$)  orbits on $\mathcal{M}_{p_0}$. So $\mathcal{M}_{p_0}=\Rset^2$. Also,  there is only one homoclinic orbit (when projected to $\Bbb T^2$) with first homology class $(0,1)$. So, by Theorem \ref{main2}, $\partial F_0$ is differentiable at $p_0$ and $n_{p_0}=\{(0,1)\}$.

Next, we prove that \( p_0 \) is a nonlinear point. We argue by contradiction. Suppose there exists a distinct point \( p' \in \partial F_0 \) such that the line segment connecting \( p_0 \) and \( p' \) lies entirely on \( \partial F_0 \). Then the vector \( p_0 - p' \) must be parallel to \( (1,0) \). Let $v'$ be  a viscosity solution to (\ref{mech-cell}) associated with $p'$. Then for $u'(x)=p'\cdot x+v'(x)$
$$
\begin{array}{ll}
\int_{0}^{T_x}{1\over 2}|\dot \xi_x|^2-V(\xi_x)\,dt&=u(\xi_x(T_x))-u(x_1,0)\\[3mm]
&=p\cdot (0,1)=p'\cdot (0,1)=u'(\xi_x(T_x))-u'(x_1,0).
\end{array}
$$
Hence by {\bf (P1)},  $\xi_x$ is also the gradient flow of $u'$. Thus
$$
p'+Dv'=p_0+Dv \quad \text{in $\Rset^2$}.
$$
This implies that $(p_0-p')\cdot x+v-v'$ is a constant, which is absurd. \qed

\begin{example}\label{example:removable}
     Here we present an example where $\partial F_0$ is $C^1$ and Case 1 in Theorem \ref{main} happens. Moreover, $\overline H$ here is nowhere differentiable along $\partial F_0$. Our example is stable under small perturbation of $V$. 
     
     For $\theta\in [-1,0]$, let $V_{\theta}$ be a smooth $\Zset^2$-periodic function satisfying that 
\begin{enumerate}[label=\arabic*.]
\item  $\max_{\Rset^2 }V_{\theta}=V_{\theta}(0)=\theta<0$ and $V_{\theta}$ is non-decreasing with respect to $\theta$ 

\item 
\be\label{eq:partoverlap}
V_{\theta}=V_{-1} \quad \text{$x\in  \Rset^2\backslash \left(B_{1\over 4}(0)+\Zset^2\right)$};
\ee

\item
$$
V_{\theta}=
\begin{cases}
-1 \quad \text{if $|x|\leq {1\over 2}$}\\
-R \quad \text{if $|x|\in [{1\over 4}, {1\over 3}]$}.
\end{cases}
$$
\end{enumerate}
Here $R$ is a fixed constant satisfying that  for $\eta(t)={1\over 2}(\cos t, \sin t)$, 
\be\label{eq:Rbig}
{\sqrt {R}\over 12}\geq \int_{0}^{T}{1\over 2}|\dot \eta|^2-V_{-1}(\eta)\,dt
\ee
We denote by $\overline{H}_{\theta}$, $\widetilde{\mathcal{M}}_{p,\theta}$, and $\mathcal{M}_{p,\theta}$ the effective Hamiltonian, the Mather set, and the projected Mather set associated with the Hamiltonian ${\tfrac{1}{2}}|p|^2+V_{\theta}$, respectively.  
Define
\[
F_{0,\theta}=\{\,p\in \mathbb{R}^2 \mid \overline{H}_{\theta}(p)=0\,\}, 
\qquad 
F_0=\{\,p\in \mathbb{R}^2 \mid \overline{H}_0(p)=0\,\}.
\]
\end{example}

{\bf Claim:} $\partial F_0$ is $C^1$ and Case 1 in Theorem \ref{main} happens for $V=V_0$. Moreover, for every $p\in \partial F_0$, $\mathcal {M}_p\backslash\{0\}\not= \emptyset$. Also, $\overline H_0$ is nowhere differentiable along $\partial F_0$.

\medskip

Proof: It suffices to show that  for all $\theta\in [-1,0)$ and for all $p\in \partial F_0$, 
$$
F_{0,\theta}=\partial F_0,  \quad \partial \overline {H}_\theta(p)\subset \partial \overline {H}_0(p) \quad  \mathrm{and} \quad \widetilde{\mathcal{M}}_{p,\theta}\subset \widetilde{\mathcal{M}}_{p,0}.
$$

 Suppose that $\xi:[0,T]\to \Rset^2$ is a Lipschitz continuous curve such that $|\xi(0)|={1\over 3}$, $|\xi(T)|={1\over 4}$ and $|\xi(t)|\in [{1\over 4}, {1\over 3}]$. Then
$$
\int_{0}^{T}{1\over 2}|\dot \xi|^2-V_\mu(\xi)\,dt\geq {1\over 288 T}+RT\geq {\sqrt {R}\over 12}.
$$
Hence,  for any $\theta\in (-1,0)$ and $p\in \Rset^2$ with $\overline H_{\theta}(p)=0$, an unbounded absolutely minimizing curve associated with $L_{\theta}(q,x)={1\over 2}|q|^2-V_\theta$ can not enter $B_{1\over 4}(0)$. Otherwise, it has to reach $\partial B_{1\over 2}$ and a route along $\partial B_{1\over 2}$ has less action by (\ref{eq:Rbig}). See Figure \ref{f13} below. Therefore, for every $p\in \{p\in \Rset^2|\ \overline H_{\theta}(p)=0\}$ and all $\theta\in (0,1]$,
\be\label{eq:Matherhole}
\mathcal{M}_{p,\theta}\cap \left(B_{1\over 4}(0)+\Zset^2\right)=\emptyset.
\ee

 Since  $0>\theta=\min_{\Rset^2}\overline H_{\theta}$,  by (\cite{B1,C}), for all  $\theta\in [-1,0)$, the set $F_{0,\theta}$ is $C^1$ and satisfies that for any $p\in C_\theta$, $p$ is linear point if and only if the outward normal vector of $C_\theta$ at $p$ belongs to $\Rset \Zset^2$. 

  To prove  $F_{0,\theta}=\partial F_0$, it is enough  to prove that for $\theta_1>\theta_2\in [-1,0]$
$$
\overline H_{\theta_1}(p)=0\quad \Rightarrow \quad \overline H_{\theta_2}(p)=0.
$$
In fact,  fix $p_0\in F_{0,\theta}$. Then $\overline H_{\theta_2}(p_0)\leq 0$. Let  $\mu$ a Mather measure associated with $p_0$ and the Hamiltonian ${1\over 2}|p|^2+V_{\theta_1}$.  Due to the above discussion, orbits on the support of $\mu$ can not enter $B_{1\over 4}(0)+\Zset^2$. Hence, by (\ref{eq:partoverlap}), $\mu$ is also flow invariant with respect the Hamiltonian ${1\over 2}|p|^2+V_{\theta_2}$ and 
$$
\int_{\mu}{1\over 2}|q|^2-V_{\theta_2}-p_0\cdot q\,d\mu=\int_{\mu}{1\over 2}|q|^2-V_{\theta_1}-p_0\cdot q\,d\mu=0.
$$
Therefore, thanks to (\ref{negative-hbar}), 
$$
-\overline H_{\theta_2}(p_0)\leq 0. 
$$
Accordingly,  $\overline H_{\theta_2}(p_0)=0$, which implies that $\mu$ is also a Mather measure associated  with $p_0$ and the Hamiltonian ${1\over 2}|p|^2+V_{\theta_2}$. Moreover, $\partial \overline {H}_\theta(p)\subset \partial \overline {H}_0(p)$ follows immediately from $\overline {H}_\theta\leq \overline {H}_0$ and $\overline {H}_\theta(p_0)=\overline {H}_0(p_0)$.\qed
\begin{center}
\includegraphics[scale=0.6]{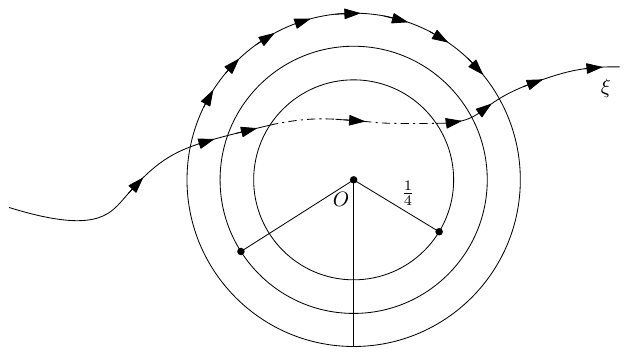}
\captionof{figure}{}
 \label{f13}
\end{center}

\begin{example}\label{example:separable} Finally, we present an example which does not have nonlinear points, which is probably the only computable example. Let 
$$
V(x)=h_1(x_1)+h_2(x_2).
$$
Here $h_1$ and $h_2$ smooth periodic function on $\Rset$ satisfying that 
$$
\max_{\Rset}h_1=\max_{\Rset}h_2=0.
$$
Then for $p=(p_1,p_2)$, 
$$
\overline H(p)=\overline H_1(p_1)+\overline H_2(p_2)
$$
For $i=1,2$, $\overline H_i\geq 0$ is the 1d effective Hamiltonian associated with ${1\over 2}|p_i|^2+h_i$, which possesses explicit formulas \cite{LPV}
$$
\begin{cases}
\overline H_i(p_i)=0 \quad \text{when $|p_i|\leq \int_{0}^{1}\sqrt{-2h_i(x_i)}\,dx_i$}\\[3mm]
|p_i|=\int_{0}^{1}\sqrt{2(\overline H_i(p_i)-h_i(x_i))}\,dx_i \quad \text{otherwise}.
\end{cases}
$$

Case 1: Neither $h_1$ nor $h_2$ is a constant function.  Then
$$
F_0=\{p\in \Rset^2|\ \overline H(p)=0\}=[-L_1,L_1]\times [-L_2, L_2]
$$
for $L_i= \int_{0}^{1}\sqrt{-2h_i(x_i)}\,dx_i$. In this case, for every $p\in F_0$, 
$$
\mathcal {M}_p=\{0\}.
$$
and for $p=(L_1, L_2)$, $G_p=\{(1,0), (0,1), (1,1)\}$. 

\medskip

Case 2: $h_1\equiv 0$ and $h_2$ is not constantly zero.  Then $\overline H_1(p_1)={1\over 2}|p_1|^2$ and  for $c>0$,
$$
F_c=\{p\in \Rset^2|\ \overline H(p)=c\}
$$
contains exactly two linear segments $\pm {\sqrt 2c}\times [-L_2,L_2]$.\qed

\end{example}

\section{Appendix A: Existence of flat part}

For reader's convenience, we provide a self-contained proof of the existence of flat part of $F_0$ by constructing viscosity subsolutions to the cell problem. See \cite{Con,LPV} for existence results under much more general assumptions. 

\medskip

\begin{theo} Assume that $V:\Rset^n\to \Rset$ is continuous and $\Zset^n$-periodic.  Suppose that $\{x\in \Rset^n|\ V=\max_{\Rset^n}V\}$ is a finite set. Then $0$ is an interior point of $F_0$. 
\end{theo}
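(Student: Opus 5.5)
The plan is to exhibit, for every $p$ in a small ball around $0$, a $\Zset^n$-periodic Lipschitz subsolution $v$ of $\tfrac12|p+Dv|^2+V(x)\le 0$ a.e. By the inf--max representation this gives $\overline H(p)\le 0=\max V$. Since $\overline H\ge \max V$ always, we get $\overline H(p)=0$, i.e. $p\in F_0$. After normalizing $\max V=0$ and translating, let the maximizers be $x_1,\dots,x_N$ in one period cell. By continuity and finiteness, for every small $r>0$ there is $\delta_r>0$ with
\[
V\le -\delta_r \quad\text{on } \Rset^n\setminus\Big(\bigcup_{i} B_r(x_i)+\Zset^n\Big).
\]

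The construction of $u=p\cdot x+v$ runs as follows. Choose $r$ so small that the balls $B_{r}(x_i)+k$ are pairwise disjoint. On the complement $U_r$ of the union of balls $B_{2r}(x_i)+\Zset^n$, the requirement $|Du|\le \sqrt{2\delta_r}$ is enough to make $u$ a subsolution. Inside the balls the constraint is harsher: $|Du(x)|\le \sqrt{-2V(x)}$, which may be $0$ at $x_i$. The natural choice is therefore $u$ locally constant on each small ball $\overline{B_r(x_i)}+k$. Concretely, I would take a fixed periodic $C^1$ function $\phi$ with $\phi(x)=-x$ (componentwise, after choosing the cell) in neighbourhoods $B_{2r}(x_i)$ minus affine adjustment. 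More simply, let $w(x)=x-\psi(x)$ where $\psi:\Rset^n\to\Rset^n$ is $\Zset^n$-periodic plus identity, i.e. $\psi(x)-x$ is periodic, with $\psi$ locally constant on each $B_{r}(x_i)+k$. Such a $\psi$ exists, for instance by interpolating with a cutoff between the value $x_i+k$ on $B_r(x_i)+k$ and $x$ outside $B_{2r}(x_i)+k$. Then set $u(x)=p\cdot\psi(x)$, so that $v=p\cdot(\psi(x)-x)$ is periodic and Lipschitz. We have $Du=0$ on the small balls and $|Du|\le |p|\,\|D\psi\|_\infty=:|p|C_r$ everywhere.

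To verify the subsolution property there are three regions. On $B_r(x_i)+k$ we have $Du=0$, so $\tfrac12|Du|^2+V=V\le 0$. On the annuli and the complement, $V\le-\delta_r$, and $\tfrac12|Du|^2\le \tfrac12 C_r^2|p|^2\le\delta_r$ as soon as $|p|\le \sqrt{2\delta_r}/C_r$. Hence $\tfrac12|p+Dv|^2+V\le 0$ a.e. for all $p$ in the ball of radius $\sqrt{2\delta_r}/C_r$. Mollifying $v$ slightly, or using Lemma~\ref{calib} directly in place of the inf--max formula with $C^1$ test functions, gives $\overline H(p)\le 0$. Therefore a neighbourhood of $0$ lies in $F_0$, so $0$ is an interior point.

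The only delicate step is constructing $\psi$ rigorously: it must be locally constant on the balls, have the identity as its periodic-plus-linear part, and have bounded gradient. I would first do this on the torus cell with a smooth partition $\chi_i$ equal to $1$ on $B_r(x_i)$ and supported in $B_{2r}(x_i)$, setting
\[
\psi(x)=x+\sum_{i,k}\chi_i(x-k)\,\bigl(x_i+k-x\bigr).
\]
The gradient bound $C_r\lesssim 1+1/r$ is harmless because $r$ is fixed before $p$ is chosen. Continuity of $V$ alone suffices, and neither smoothness nor non-degeneracy is used.
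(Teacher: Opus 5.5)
Your proof is correct and follows the paper's approach. Your $v=p\cdot(\psi(x)-x)$ is exactly the paper's periodic corrector $w_p$, with $p+Dw_p=0$ near the maximizers and a gradient bound independent of the direction of $p$, and shrinking $|p|$ makes $\tfrac12|p+Dv|^2+V\le 0$ everywhere. The paper finishes by touching the cell-problem solution $v_{rp}$ with $rw_p$ at a maximum of $v_{rp}-rw_p$ instead of quoting the inf--max formula, which is the same argument. One small fix: you should choose $r$ so that the supports $B_{2r}(x_i)+k$, and not only the balls $B_r(x_i)+k$, are pairwise disjoint; otherwise $\psi$ need not be locally constant on the small balls.
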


\nit Proof: Without loss of generality, we assume that $\max_{\Bbb T^n}V=0$. Since $\overline H(p)\geq 0$, it suffices to show that there exists $r>0$ such that for every unit vector $|p|=1$, 
$$
\overline H(rp)\leq 0.
$$
Let $S=\{x\in \Rset^n|\ V=\max_{\Rset^n}V\}$. Clearly, we could find a   neighborhood $U$ of $S$ such that for every $|p|=1$, there exists a smooth periodic function $w_p$ satisfying that 
$$
Dw_p=-p \quad \text{in $U$ }
$$
and $\max_{\Rset^n}|Dw_p|\leq C$ for a constant $C$ independent of $p$. Accordingly, we may find $r>0$ independent of $p$ such that for all $|p|=1$, 
$$
{1\over 2}|rp+rDw_p|^2+V(x)<0 \quad \text{for $x\notin S$}.
$$
Let $v_{rp}$ be a $\Zset^n$ periodic viscosity solution to the cell problem
$$
{1\over 2}|rp+Dv_{rp}|^2+V=\overline H(rp) \quad \text{in $\Rset^n$}.
$$
Choose $x_p\in \Rset^n$ such that
$$
\max_{\Rset T^n}(v_{rp}-rw_p)=(v_{rp}-rw_p)(x_p).
$$
Owing to the definition of viscosity solution, 
$$
{1\over 2}|rp+rDw_p|^2+V(x_p)\geq \overline H(rp).
$$
This immediately leads to $\overline H(rp)\leq 0$. 
\qed

\section{Appendix B: Technical Lemmas}

Below is a technical lemma  used in the paper. 

\begin{lem}\label{exp-decay-1}  Suppose that $h(t):[0, T]\to (0, \infty)$ is a $C^2$ function satisfying that there exists $c>0$ such that
$$
h''(t)\geq c^2 h(t) \quad \text{all $t\in [0, T]$.}
$$
Then
$$
h(t)\leq h(0){e^{-ct}}+h(T)e^{-c(T-t)};
$$
\end{lem}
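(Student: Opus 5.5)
The plan is to compare $h$ with the explicit solution of the equality case. Set
\[
\phi(t)=\frac{h(0)\sinh(c(T-t))+h(T)\sinh(ct)}{\sinh(cT)}.
\]
Then $\phi''=c^2\phi$, and $\phi$ agrees with $h$ at $t=0$ and $t=T$.

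The first step is a maximum principle argument. Put $w=h-\phi$. Then $w(0)=w(T)=0$ and $w''\ge c^2 w$ on $[0,T]$. Suppose $w$ had a positive maximum at an interior point $t_0$. There $w''(t_0)\le 0$, while $c^2w(t_0)>0$, which contradicts $w''\ge c^2w$. Hence $w\le 0$, i.e.\ $h\le\phi$ on $[0,T]$. Only $C^2$ regularity is used here. Positivity of $h$ is not needed for this step, but it is what makes the final bound meaningful in the paper's applications.

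The second step is to bound the two coefficients of $\phi$ by elementary exponential inequalities. For $0\le s\le T$ I would check
\[
\frac{\sinh(cs)}{\sinh(cT)}\le e^{-c(T-s)}.
\]
This is equivalent to $e^{cs}-e^{-cs}\le e^{cs}-e^{cs-2cT}$, i.e.\ to $e^{-cs}\ge e^{cs-2cT}$, i.e.\ to $s\le T$. Applying it with $s=T-t$ gives $\sinh(c(T-t))/\sinh(cT)\le e^{-ct}$, and with $s=t$ gives $\sinh(ct)/\sinh(cT)\le e^{-c(T-t)}$. Since $h(0),h(T)>0$, combining these with $h\le\phi$ yields the claimed bound.

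There is no real obstacle here. The only point needing care is the degenerate case $T=0$, which is trivial. Note also that in Lemma~\ref{lem:lower-bound} the lemma is applied with $c=\sqrt a$ (or $a$); the statement as proved holds for any $c>0$, so that application is covered.
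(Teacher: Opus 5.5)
Your argument is correct and is essentially the paper's maximum-principle proof: the paper skips your intermediate function $\phi$ and runs the same interior-maximum contradiction directly on $h-g$ with $g(t)=h(0)e^{-ct}+h(T)e^{-c(T-t)}$, which also satisfies $g''=c^2g$ and strictly exceeds $h$ at $t=0$ and $t=T$ because $h>0$. Your detour through $\phi$ costs one extra elementary inequality but yields the slightly sharper intermediate bound $h\le\phi$.
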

Proof: Let 
$$
g(t)=h(0){e^{-ct}}+h(T)e^{-c(T-t)}.
$$
Choose $x_0\in  [0,T]$ such that
$$
h(t_0)-g(t_0)=\max_{t\in [0,T]}(h(t)-g(t)).
$$
If $h(t_0)-g(t_0)>0$, then $t_0\in (0,T)$ and 
$$
h''(t_0)\leq g''(t_0)=c^2g(t_0)<c^2h(t_0).
$$
This is a contradiction. Hence $h(t_0)-g(t_0)\leq 0$. \qed

\begin{cor}\label{exp-decay-2} Suppose that $\xi:[0, \infty)\to \Rset^2$ satisfy that 
$$
\begin{cases}
\ddot \xi=-DV(\xi(t))\\[3mm]
\lim_{t\to +\infty}(\xi(t), \dot \xi(t))=(0,0)
\end{cases}
$$
Then there exists $M, \lambda>0$ such that 
$$
|\xi(t)|+|\xi'(t)|\leq Me^{-\lambda t} \quad \text{for all $t\geq 0$}.
$$
\end{cor}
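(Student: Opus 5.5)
Since $\xi(t)\to 0$ and $\dot\xi(t)\to 0$ as $t\to+\infty$, I will work on $[T_0,\infty)$, where $T_0$ is chosen so that $\xi$ stays in a small ball around $O$. On this ball the Hessian of $V$ satisfies $-D^2V\geq a^2 I$ (it is close to $-D^2V(O)$, whose eigenvalues are $a^2$ and $b^2$). The plan is to apply Lemma~\ref{exp-decay-1} twice: first to the kinetic quantity $h_1(t)=|\dot\xi(t)|^2$, exactly as in the proof of Lemma~\ref{lem:lower-bound}, and then to the position $|\xi|^2$ (or $|\xi|$ directly, via the equation). A finite initial segment $[0,T_0]$ only costs a larger constant $M$.

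\textbf{Step 1: velocity.} Differentiating twice and using $\ddot\xi=-DV(\xi)$ gives
\[
h_1''=-2\dot\xi\cdot D^2V(\xi)\dot\xi+2|DV(\xi)|^2\geq 2a^2 h_1 .
\]
If $h_1$ vanishes somewhere, I perturb it to $h_1+\epsilon$. This would break the differential inequality, so instead I handle zeros by noting that Lemma~\ref{exp-decay-1}'s maximum-principle proof works equally well for $h\ge 0$. Applying the lemma on $[T_0,T]$ and letting $T\to\infty$ kills the term $h_1(T)e^{-c(T-t)}$, because $h_1(T)\to 0$. This yields $|\dot\xi(t)|^2\leq h_1(T_0)e^{-\sqrt2 a(t-T_0)}$.

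\textbf{Step 2: position.} Since $\xi(t)\to 0$, I can write $\xi(t)=-\int_t^\infty\dot\xi(s)\,ds$. Then
\[
|\xi(t)|\leq \int_t^\infty |\dot\xi(s)|\,ds\leq C e^{-\lambda t}, \qquad \lambda=\tfrac{a}{\sqrt2}.
\]
Combining Steps 1 and 2 and enlarging $M$ to cover $[0,T_0]$ (where everything is bounded by continuity) completes the argument.

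\textbf{Main obstacle.} The only delicate point is the positivity hypothesis in Lemma~\ref{exp-decay-1}. I expect to resolve it by checking that the comparison argument never used $h>0$: at a positive maximum of $h-g$ we have $h>g>0$, so the contradiction goes through. An alternative is to use $|\xi|^2$, whose second derivative $2|\dot\xi|^2-2\xi\cdot DV(\xi)\ge a^2|\xi|^2$ near $O$, and apply the same lemma.
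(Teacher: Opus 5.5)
Your argument is correct, but it runs in the opposite order from the paper's.

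\textbf{The two routes.} The paper applies Lemma~\ref{exp-decay-1} to the position $h=|\xi|^2$. It uses $h''=2|\dot\xi|^2-2DV(\xi)\cdot\xi\geq -2DV(\xi)\cdot\xi\gtrsim h$ near $O$; this is your ``alternative''. It then gets the velocity from the zero-energy identity $\frac12|\dot\xi|^2+V(\xi)=0$, i.e.\ $|\dot\xi|\leq\sqrt{-2V(\xi)}\leq C|\xi|$. You instead apply the lemma to $|\dot\xi|^2$, as in the proof of Lemma~\ref{lem:lower-bound}. You use the hypothesis $\dot\xi(T)\to 0$ to kill the boundary term, and you recover the position by integrating $\xi(t)=-\int_t^\infty\dot\xi(s)\,ds$.

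\textbf{What each buys.} Your route needs uniform negative definiteness of $D^2V$ in a neighborhood of $O$, but no energy identity. The paper's route needs only the first-order coercivity $-DV(x)\cdot x\gtrsim|x|^2$ plus energy conservation; the hypothesis $\dot\xi\to0$ enters there only to pin the energy at $V(O)=0$.

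\textbf{Positivity hypothesis.} Your remark that the proof of Lemma~\ref{exp-decay-1} needs only $h\geq 0$ is right. At an interior positive maximum of $h-g$ one has $h''\leq g''=c^2g<c^2h$, and the endpoint check uses only $h(0),h(T)\geq 0$. In the paper's route the analogous issue would arise only if $\xi$ hit $O$, which by zero energy and ODE uniqueness forces $\xi\equiv O$.

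\textbf{Two small fixes.} First, near $O$ you only get $-D^2V\geq (a^2-\epsilon)I$, not $-D^2V\geq a^2I$, because $a^2$ is already the smallest eigenvalue at $O$ itself. Use $-D^2V\geq \frac{a^2}{2}I$ on a small ball; this only changes the rate $\lambda$. Second, delete the self-contradictory sentence about perturbing $h_1$ to $h_1+\epsilon$, since your actual argument does not use it.
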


Proof:  Let 
$$
h(t)=|\xi(t))|^2.
$$
Then when $t$ is large enough, 
$$
h''(t)=2\ddot \xi(t)\cdot \xi(t)+2|\dot \xi(2)|^2\geq -2 DV(\xi(t))\cdot \xi(t)\geq ah(t).
$$
Hence the conclusion follows immediately from Lemma \ref{exp-decay-1} above and 
$$
{1\over 2}|\dot \xi|^2+V(\xi(t))=0  \quad \text{for all $t\geq 0$}.
$$
\qed

\section{Appendix C: Existence and uniqueness of limiting directions}
The following conclusion is very well to experts. For readers' convenience, we present the proof here. 
\begin{lem}\label{lem:vunique} For any orbit $\xi:[0,\infty)\to \Rset^2$ satisfying 
\be\label{eq:orbitcondition}
\ddot \xi=-DW(\xi) \quad \text{and} \quad \lim_{t\to \infty}\xi(t)=(0,0),
\ee
 the limit
$$
\lim_{t\to \infty}{\xi(t)\over |\xi(t)|}
$$
exist (denote by $v_\xi$) and  $v_\xi\in \{v_a,\  -v_a,\  v_b,\  -v_b\}$.  

Also, there exists a unique orbit $\xi_+$ (respectively, $\xi_-$) such that $v_{\xi_+}=v_b$ (respectively, $v_{\xi_-}=-v_b$.
\end{lem}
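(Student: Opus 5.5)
We plan to treat this by the classical stable/unstable manifold theory for the hyperbolic fixed point $(x,\dot x)=(0,0)$ of the Hamiltonian flow $\ddot \xi=-DW(\xi)$, where $W$ has a nondegenerate local maximum at $O$ with $D^2W(O)$ having eigenvalues $-a^2,-b^2$, $0<a<b$ (in the notation of this appendix, possibly $-a,-b$; the argument does not depend on this). The linearization at the origin has eigenvalues $\pm a,\pm b$, with stable eigenvectors $(v_a,-a v_a)$ and $(v_b,-b v_b)$. First we show that any orbit with $\xi(t)\to O$ also has $\dot\xi(t)\to 0$. One way is energy conservation: along the orbit, $\tfrac12|\dot\xi|^2+W(\xi)$ is constant and must equal $0$, because $\dot\xi$ is bounded and $\xi\to O$ forces $\dot \xi$ to average to zero. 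With $(\xi,\dot\xi)\to(0,0)$ established, Corollary \ref{exp-decay-2} gives exponential decay, so $(\xi,\dot\xi)$ lies on the two-dimensional local stable manifold $W^s$, which is a smooth graph over the stable eigenspace $E^s=\mathrm{span}\{(v_a,-av_a),(v_b,-bv_b)\}$.

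Next we obtain the limiting direction. On $W^s$ we write the dynamics in the coordinates $(y_a,y_b)$ along $E^s$, so that $\dot y_a=-a y_a+O(|y|^2)$ and $\dot y_b=-b y_b+O(|y|^2)$. We would invoke the standard refinement (Hartman–Grobman in $C^1$ form, or the strong stable manifold theorem) to get a dichotomy. Either the orbit lies on the one-dimensional strong stable manifold $W^{ss}$, tangent to the $b$-eigendirection, and then $y_a=o(y_b)$ and $\xi(t)/|\xi(t)|\to\pm v_b$. Or $y_a\neq 0$ asymptotically and dominates, since $|y_b|\lesssim e^{-bt}$ while $|y_a|\sim c e^{-at}$ with $c\neq0$; then $\xi/|\xi|\to\pm v_a$. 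Without invoking the $C^1$ linearization, we would prove the dichotomy directly. We set $r=y_b/y_a$ and show that $\dot r=-(b-a)r+O(|y|)(1+r^2)$. Since $|y|$ decays exponentially, either $r\to 0$ or $|r|\to\infty$. In the latter case we switch to $1/r$ and show $y_a$ decays like $e^{-bt}$, which places the orbit on $W^{ss}$.

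The last step is uniqueness. $W^{ss}$ is a one-dimensional invariant curve through the origin, tangent to $(v_b,-bv_b)$, and the fixed point splits it into exactly two branches. Each branch is a single orbit up to time translation, and the two branches approach from the directions $v_b$ and $-v_b$ respectively. This gives existence of $\xi_\pm$. For uniqueness, we note that any orbit with limit direction $\pm v_b$ has $y_a=o(y_b)$. The dichotomy above then forces it onto $W^{ss}$, so it coincides with the corresponding branch, and uniqueness holds up to time translation, which is the intended meaning.

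The main obstacle is the dichotomy step, namely excluding orbits whose $a$-component vanishes only asymptotically without the orbit lying on $W^{ss}$. We would handle it with the Lyapunov–Perron characterization of $W^{ss}$: it is exactly the set of points whose forward orbit decays faster than $e^{-(a+\epsilon)t}$. We would then show that if $y_a$ is not of order $e^{-at}$ exactly, the variation-of-constants formula for $y_a$ forces decay at rate $e^{-(b-\epsilon)t}$.
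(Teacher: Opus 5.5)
Your proposal is correct and follows essentially the paper's route. Both arguments use the stable manifold theorem to reduce to the planar system $\dot\xi=\mathrm{diag}(-a,-b)\xi+F(\xi)$. They then split orbits by decay rate: variation of constants gives directions $\pm v_a$ for orbits not decaying faster than $e^{-\lambda_0 t}$ ($a<\lambda_0<\min\{b,2a\}$), and a Lyapunov--Perron contraction shows the faster-decaying orbits are exactly the two branches $\xi_\pm$ of the strong stable manifold. The paper builds this by hand in Claims 1--3, where you would cite the strong stable manifold theorem.

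Your preliminary energy argument that $\xi\to O$ forces $\dot\xi\to 0$ fills a step the paper leaves implicit when it invokes the stable manifold theorem. It is cleanest via $\ddot\xi=-DW(\xi)\to 0$, so $\dot\xi$ is almost constant on unit time intervals.
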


\begin{proof} After a suitable rotation, we may assume that
$$
W(x_1,x_2)=-{a^2\over 2}x_{1}^{2}-{b^2\over 2}x_{2}^{2}+O(|x|^3) \quad \text{when $x$ is near the origin $O$}.
$$
Let 
$z=(\xi,\dot \xi)^{\top}\in\mathbb{R}^{4}$. 
Then our second–order ODE becomes a first–order system
$\dot z = Az + G(z)$ with
\[
A=
\begin{pmatrix}
0 & 0 & 1 & 0\\
0 & 0 & 0 & 1\\
a^2 & 0 & 0 & 0\\
0 & b^2 & 0 & 0
\end{pmatrix},
\qquad
G(z)=O(|z|^{2}),\quad 0<a<b.
\]
The eigenvalues of $A$ are 
$\pm {a},\, \pm {b}$. By the stable manifold theorem, for $\xi$ near $0$ we have
\be\label{eq:stablemanifold}
\dot \xi=
\begin{pmatrix}
-{a} & 0 \\
0 & -{b}
\end{pmatrix}\xi+F(\xi),
\ee
where $F(\xi)=(F_1,F_2)=O(|\xi|^{2})$. Assume
\[
|F(x)|\leq C|x|^2 \quad \text{for $|x|\leq 1$}.
\]
Fix $\lambda_0$ with ${a}<\lambda_0< \min\{{b},\,2{a}\}$. Choose $r_0\in (0,1)$ such that
\be\label{eq:contractionbound}
|DF|\leq {b-a\over 2(\lambda_0-a)(b-\lambda_0)}  \quad \text{in $B_{r_0}(0)$}.
\ee
Write $\xi(t)=(x_1(t),x_2(t))$.

\medskip
\noindent
\textbf{Claim 1.} If $\sup_{t>0}e^{\lambda _0t}|\xi(t)|=\infty$, then $v_\xi=\pm e_1$.  

\medskip
\emph{Proof of Claim 1.} Without loss of generality suppose
\begin{equation}\label{eq:xicontain}
\xi(\Rset)\subset \overline{B_{r_0}(0)}
\end{equation}
and
\[
|F(\xi(t))|\leq {2a-\lambda_0\over 4}|\xi(t)|.
\]
Set $\omega(t)=\tfrac12|\xi(t)|^2$. Then by (\ref{eq:stablemanifold}), 
\[
\dot \omega(t)\leq -2{a}\,\omega(t)+{2a-\lambda_0\over 2} \omega(t),\qquad t\ge0,
\]
so $\omega(t)\leq Ce^{-\lambda_0 t}$. Hence
\[
\dot \xi=
\begin{pmatrix}
-{a} & 0 \\
0 & -{b}
\end{pmatrix}\xi+R(t),\qquad |R(t)|\le Ce^{-\lambda_0t}.
\]
Thus for any fixed $t_0>0$, $t>t_0$
\be
\begin{array}{ll}
x_1(t)=x_1(t_0)\,e^{-a(t-t_0)}+\int_{t_0}^{t} e^{-a(t-s)}\,R(s)\,ds\\[3mm]
x_2(t)=x_2(t_0)\,e^{-b(t-t_0)}+\int_{t_0}^{t} e^{-b(t-s)}\,R(s)\,ds.
\end{array}
\ee
This implies that
\be\label{eq:abode}
\begin{array}{ll}
|x_1(t)-x_1(t_0)\,e^{-a(t-t_0)}|\leq {C\over \lambda_0-a}(e^{-\lambda_0t_0-a(t-t_0)}-e^{-\lambda_0t})\\[3mm]
|x_2(t)-x_2(t_0)\,e^{-b(t-t_0)}|\leq {C\over b-\lambda_0}(e^{-\lambda_0t}-e^{-\lambda_0t_0-b(t-t_0))})
\end{array}
\ee
Consequently,  $\sup_{t>0}e^{\lambda_0t}|x_2(t)|<\infty$ and by the assumption, $\sup_{t>0}e^{-\lambda_0t}|x_1(t)|=\infty$.  
Analyzing the integral representation of $x_1$, we distinguish two cases:

\medskip

- If $\sup_{t>0}e^{\lambda_0t}x_1(t)=\infty$, then by choosing a suitable large $t_0$ in (\ref{eq:abode}), we have that  $x_1(t)\geq C_0e^{-at}$ for $t\geq t_0$ and a positive constant depending on $t_0$.  Then $v_\xi=e_1$.

\medskip

- If $\sup_{t>0}e^{\lambda_0t}(-x_1(t))=\infty$,   then by choosing a suitable large $t_0$  in (\ref{eq:abode}), we have that  $x_1(t)\leq -C_0e^{-at}$ for $t\geq t_0$ and a positive constant depending on $t_0$.  Then $v_\xi=-e_1$.

\medskip
\noindent
\textbf{Claim 2.} If an orbit $\xi=(x_1(t),x_2(t))$ satisfies (\ref{eq:orbitcondition}) and 
$\sup_{t>0} e^{\lambda_0 t}|\xi(t)|<\infty$, then there exists $t_m\to 0$ such that $x_2(t_m)\neq 0$. 

\medskip

\emph{Proof of Claim 2.} We argue by contradiction. Otherwise, without loss of generality, assume that 
$x_2(t)\equiv 0$ for all $t\geq 0$, $x_1(0)\neq 0$, and 
$|F_1(x_1,0)|\leq \tfrac{\lambda_0-a}{2}|x_1|$. This implies 
$|x_1(t)|\geq |x_1(0)|e^{-\tfrac{\lambda_0+a}{2}t}$. Hence 
$\sup_{t>0} e^{\lambda_0 t}|\xi_\theta(t)|=\infty$, a contradiction. 
\qed

\medskip
\noindent
\textbf{Claim 3.} There exists exactly two orbits, up to translation in time, $\xi_\pm$ satisfying (\ref{eq:orbitcondition}) and
\be\label{eq:expobound}
\sup_{t>0}e^{\lambda_0t}|\xi_\pm(t)|<\infty.
\ee

\emph{Proof of Claim 3.} We first establish the existence.  Define
\[
\Gamma=\left\{\xi\in C^{\infty}([0,\infty),\overline{B_{r_0}(0)}): \ \sup_{t\ge0} e^{\lambda_0t}|\xi(t)|<\infty\right\},
\]
with norm $\|\xi\|_0=\sup_{t\ge0} e^{\lambda_0t}|\xi(t)|$. Then $(\Gamma,\|\cdot\|_0)$ is a Banach space.  Fix $\theta_+\in (0,{r_0\over 4}]$ and 
define the Lyapunov–Perron operator
\[
T_{\theta}(\xi)(t)=\Bigg(
\int_t^{\infty}e^{-{a}(t-s)}F_1(\xi(s))\,ds,\ 
\theta e^{-{b}t}+\int_{0}^{t}e^{-{b}(t-s)}F_2(\xi(s))\,ds
\Bigg).
\]
Thanks to (\ref{eq:contractionbound}),  for $\xi_1,\xi_2\in \Gamma$,
\be\label{eq:T-contraction}
||T_\theta(\xi_1)-T_\theta(\xi_2)||_0\leq {1\over 2}||\xi_1-\xi_2||_0.
\ee
Iterating from $\xi_0(t)=(0,\theta e^{-{b}t})$, one shows inductively that for $\xi_m=T_{\theta}(\xi_{m-1})$ and $m\in \Nset$, 
\[
|\xi_m(t)|\leq 3|\theta| e^{-{b}t},\qquad \text{for all $t\ge0$},
\]
consequently $\xi_m\in \Gamma$ for all $m\geq 1$. So, as $m\to \infty$,  $\xi_m$ will converge to a  fixed point $\xi_{\theta}=(x_1(t), x_2(t))\in \Gamma$ of $T_{\theta}$. Hence  $\xi_{\theta}$ satisfies (\ref{eq:stablemanifold}). Combining with $|F_i|\leq C|x|^2$ for $i=1,2$,
\[ |\xi_{\theta}(t)|\leq 3|\theta| e^{-{b}t}, \quad 
|x_1(t)|\leq C\theta^2 e^{-2{b}t} \quad  \mathrm{and}  \quad 
|x_2(t)-\theta e^{-{b}t}|\leq C\theta^2 e^{-2{b}t}.
\]
Thus 
\be\label{eq:twocurvesdirection}
v_{\xi_{\theta}}=\lim_{t\to \infty}{\xi(t)\over |\xi(t)|}=\operatorname{sign}(\theta)e_2,
\ee
which implies the existence of at least two distinct orbits satisfying (\ref{eq:expobound}).

Finally we verify uniqueness to conclude the proof. Let $\xi_1,\xi_2, \xi_3$  be three orbits satisfying  (\ref{eq:orbitcondition}) and (\ref{eq:expobound}). Without loss of generality, we may assume that $\xi_i(\Rset)\subset B_{r_0}(0)$ for $i=1,2,3$.

Owing to Claim 2, by suitable time translation, we may assume that $0\not=e_2\cdot \xi_1(0)=e_2\cdot \xi_0\in [-{r_0\over 3}, {r_0\over 3}]$. Denote by $\theta=e_2\cdot \xi_1(0)=e_2\cdot \xi_0$. Then  both $\xi_1$ and $\xi_2$ are fixed point of $T_\theta$. Accordingly, $\xi_1=\xi_2$ follows from (\ref{eq:T-contraction}).

Finally, the conclusion of the lemma follows from Claim 1, Claim 3 and (\ref{eq:twocurvesdirection}). 
\end{proof}

Below we will present a simple example to demonstrate how to find $\xi_{\pm}$ in the proof of Claim 3 above. 
\begin{example}\label{eq:uniquenessexample} Fix $\alpha\in \Rset$.  Assume that $\xi=(x_1(t),x_2(t))$ satisfies that 
$$
\begin{cases}
\dot x_1(t)=-x_1(t)+\alpha x_2^2(t)\\
\dot x_2(t)=-2x_2(t)
\end{cases}
$$

Write $x_2(0)=\theta\not=0$. Then $x_1(t)=\left(x_1(0)+{\alpha\theta^2\over 3}\right)e^{-t}-\alpha{\theta^2\over 3}e^{-4t}$ and $x_2(t)=\theta e^{-2t}$. Hence for (\ref{eq:expobound}) to hold for any $\lambda_0\in [1,2]$, we must have $x_1(0)=-{\alpha\theta^2\over 3}$.  Therefore, up to translation in time, there are exactly two eligible orbits
$$
\xi_+(t)=\left({\alpha\over 3}e^{-4t}, e^{-2t}\right) \quad \mathrm{and} \quad \xi_-(t)=\left({\alpha\over 3}e^{-4t}, -e^{-2t}\right)
$$
with $v_{\xi_+}=(0,1)$ and  $v_{\xi_-}=-(0,1)$.
\end{example}

\section*{Acknowledgments}
The authors would like to thank Professor Chongqing Cheng and Professor Wei Cheng for their warm hospitality and many insightful discussions during the authors’ visits to Nanjing University in China over the past several years. The authors also thank Professor Chongchun Zeng for helpful discussions.

\bibliographystyle{plain}

\end{document}